\documentclass[12pt]{amsart}
\usepackage{amssymb, amsmath,mathrsfs,amsthm}
\usepackage{geometry}
\geometry{verbose,tmargin=1in,bmargin=1in,lmargin=1.1in,rmargin=1.1in}
\usepackage{setspace}
\usepackage{mathtools}
\usepackage{graphicx, subfigure}
\usepackage{tikz}
\usepackage{pgfplots}

\usepackage{enumerate}
\usepackage{float}
\usepackage[pagebackref,hypertexnames=false, colorlinks, citecolor=red, linkcolor=red]{hyperref}

\newcommand{\McC}{\raise.5ex\hbox{c}}


\newtheorem{theorem}{Theorem}[section]
\newtheorem*{theorem*}{Theorem}
\newtheorem{lemma}[theorem]{Lemma}
\newtheorem{definition}[theorem]{Definition}

\newtheorem{corollary}[theorem]{Corollary}
\newtheorem*{corollary*}{Corollary}

\usepackage{color}


\theoremstyle{remark}
\newtheorem{remark}[theorem]{Remark}


\raggedbottom

\author[Anderson]{John T. Anderson}
\address{Department of Mathematics and Computer Science, College of the Holy Cross, Worcester, MA 01610,  USA.}
\email{janderso@holycross.edu}

\author[Bergqvist]{Linus Bergqvist}
\address{Department of Mathematics, Stockholm University, 106 91 Stockholm, Sweden.}
\email{linus@math.su.se}

\author[Bickel]{Kelly Bickel}
\address{Department of Mathematics, Bucknell University, Lewisburg, PA 17837, USA.}
\email{kelly.bickel@bucknell.edu}

\author[Cima]{Joseph A. Cima}
\address{Department of Mathematics, University of North Carolina at Chapel Hill, Chapel Hill, NC 27599, USA.}
\email{cima@email.unc.edu}

\author[Sola]{Alan A. Sola}
\address{Department of Mathematics, Stockholm University, 106 91 Stockholm, Sweden.}
\email{sola@math.su.se}

\keywords{Clark measure, rational inner function, unitary embedding}
 \subjclass[2010]{28A25, 28A35 (primary); 32A08, 47A55 (secondary)}
\begin{document}
\title[Clark measures for RIFs II]{Clark measures for rational inner functions II: general bidegrees and higher dimensions} 
\date{\today}

\maketitle
\begin{abstract}
We study Clark measures associated with general two-variable rational inner functions (RIFs) on the bidisk, including those with singularities, and with general $d$-variable rational inner functions with no singularities.
We give precise descriptions of support sets and weights for such Clark measures in terms of level sets and partial derivatives of the associated RIF. In two variables, we characterize when the associated Clark embeddings are unitary, and for generic parameter values, we relate vanishing of two-variable weights with the contact order of the associated RIF at a singularity. 
\end{abstract}

\section{Introduction}
For $d\in \mathbb{N}$, we let 
\[\mathbb{D}^d=\{(z_1,\ldots, z_d)\in \mathbb{C}^d\colon |z_j|<1, \, j=1,\ldots, d\}\]
denote the unit polydisk and  
\[\mathbb{T}^d=\{(\zeta_1,\ldots, \zeta_d)\in \mathbb{C}^d\colon |\zeta_j|=1,\, j=1,\ldots,d\}\]
be its distinguished boundary. If $\phi\colon \mathbb{D}^d \to \mathbb{D}$ is a holomorphic function, then, for 
$\alpha \in \mathbb{T}$, the expression
\[\Re\left(\frac{\alpha+\phi(z)}{\alpha-\phi(z)}\right)=\frac{1-|\phi(z)|^2}{|\alpha-\phi(z)|^2}\]
is positive and pluriharmonic, and hence there exists a unique positive Borel measure $\sigma_{\alpha}$ on $\mathbb{T}^d$ such that
\[\frac{1-|\phi(z)|^2}{|\alpha-\phi(z)|^2}=\int_{\mathbb{T}^d}P_{z}(\zeta) d\sigma_{\alpha}(\zeta),\]
where $P_z(\zeta)$ denotes the Poisson kernel for the polydisk
\[P_{z}(\zeta)=\prod_{j=1}^dP_{z_j}(\zeta_j),\quad \textrm{and}\quad P_{z_j}(\zeta_j)=\frac{1-|z_j|^2}{|\zeta_j-z_j|^2}.\]
Measures of this type, namely ones whose Poisson integral is the real part of a holomorphic function on the polydisk $\mathbb{D}^d$, are called pluriharmonic measures, see \cite[Section 2.2]{Krantz}.
Note that $P_z(\zeta) = C_z( \zeta)C_{\zeta}(z)/C_z(z)$, where $C_\zeta(z)$ denotes the Cauchy kernel for $\mathbb{D}^d$, defined by
$$
C_\zeta(z)= \prod_{j=1}^d \frac{1}{1-z_j \overline{\zeta}_j}, \quad z \in \mathbb{D}^d, \zeta \in \overline{\mathbb{D}}^d. 
$$
The measures $\{\sigma_{\alpha}\}$ are called the {\it Aleksandrov–Clark measures} associated with $\phi$ and if $\phi$ is inner (defined below), these measures are called \emph{Clark measures}. The purpose 
of this paper is to present several results concerning such measures for the class of {\it rational inner functions}. 

First suppose $\phi \colon \mathbb{D}^d\to \mathbb{C}$ is a bounded holomorphic function. Then, by Fatou's theorem for polydisks (see \cite{Rud69}), $\phi$ possesses non-tangential limits
\[\phi^*(\zeta)=\angle \lim_{\mathbb{D}^d\ni z\to \zeta}\phi(z)\]
for Lebesgue-almost every $\zeta \in \mathbb{T}^d$; non-tangential in this context means that $|z_j-\zeta_j|<c(1-|z_j|^2)$ for some constant $c>1$, and $j=1,\ldots, d$. Throughout this paper, when the context makes it clear that we are referencing boundary values, we will write $\phi(\zeta)$ instead of $\phi^*(\zeta)$.  A bounded holomorphic function $\phi \colon \mathbb{D}^d\to \mathbb{C}$ is called {\it inner} if these non-tangential boundary values satisfy $|\phi^*(\zeta)|=1$ for almost every $\zeta \in \mathbb{T}^d$.  Then, a {\it rational inner function} is an inner function of the form $\phi=q/p$ where $q,p$ are in the polynomial ring $\mathbb{C}[z_1,\ldots, z_d]$.

Rational inner functions (RIFs) have been studied extensively in function theory and operator theory in polydisks, especially in the two-variable setting. RIFs are more tractable than general inner functions and enjoy some additional regularity properties; for instance, a theorem of Knese states that any RIF $\phi\colon \mathbb{D}^d \to \mathbb{D}$ has non-tangential boundary values $\phi^*(\zeta)\in \mathbb{T}$ at {\it every} $\zeta \in \mathbb{T}^d$, see \cite{Kne15}. RIFs are also easy to construct (see Section \ref{sec:prel} below) and can be used to explore questions in a concrete way that appear difficult to answer for general inner functions. On the other hand, RIFs do exhibit some complexity and some surprising features in higher dimensions: for instance, $\phi=q/p$ can have singularities on the boundary at points $\tau \in \mathbb{T}^d$ where $p(\tau)=0=q(\tau)$, and the analytic and geometric properties of such boundary singularities can be relatively intricate, see \cite{BPS17, BPSprep}.

In the recent paper \cite{D19}, E. Doubtsov initiated a systematic study of Clark measures associated with inner functions in polydisks. After extending some classical one-variable results such as Aleksandrov's disintegration theorem to higher dimensions, he made the surprising observation that certain isometries into $L^2(\sigma_{\alpha})$, which are always onto for inner functions in one variable, may fail to be surjective in $d$ variables, and this behavior can even happen for Clark measures associated to RIFs. Inspired by Doubtsov's work, a subset of the authors of this manuscript undertook a detailed study \cite{BCSMich} of Clark measures associated with a subclass of two-variable RIFs $\phi=q/p$ whose $q,p$-polynomials have degree $n$ in the first variable, and $1$ in the second. In particular, \cite{BCSMich} gives an explicit description of the family of Clark measures $\{\sigma_{\alpha}\}_{\alpha\in \mathbb{T}}$ for bidegree $(n,1)$ RIFs and a criterion, formulated in terms of non-tangential values at singularities of $\phi$, for when Clark isometries into $L^2(\sigma_{\alpha})$ are surjective. The purpose of the present work is to extend these results to the full class of two-variable RIFs, with no degree restrictions. Additionally, we will discuss obstructions that arise in higher dimensions and prove some partial results concerning $d$-variable RIFs and associated Clark measures under additional hypotheses.

\subsection{Overview}
First, in Section \ref{sec:prel}, we discuss some basic facts about Clark measures in the polydisk setting; these results are most likely known to specialists. We then review and extend some results concerning $d$-variable rational inner functions from the recent papers \cite{BPS17, BPSprep, BPSmulti}. In particular, we explain how RIFs on the bidisk can be seen to have level sets that can be globally parameterized on $\mathbb{T}^2$ by analytic functions even in the presence of singularities. To avoid trivial complications, here and throughout this paper, we will assume that $\phi=q/p$ for polynomials $q,p\in \mathbb{C}[z_1,\ldots, z_d]$ that are non-constant in each variable $z_j$. 

In Section \ref{sec:structone}, we present a structure formula for Clark measures associated with a bidegree $(m,n)$ RIF in the bidisk, thus extending the work in \cite{BCSMich} which dealt with bidegree $(m,1)$ RIFs. In brief, for all but finitely many $\alpha \in \mathbb{T}$, called the generic case, the 
pairing of the measure $\sigma_{\alpha}$ with a continuous function $f$ on $\mathbb{T}^2$ can be described by a sum of terms of the form
\[\int_{\mathbb{T}}f(\zeta, g^{\alpha}_j(\zeta))\frac{dm(\zeta)}{|\frac{\partial \phi}{\partial z_2}(\zeta, g^{\alpha}_j(\zeta))|},\]
where $m$ denotes normalized Lebesgue measure on $\mathbb{T}^2$ and $g^{\alpha}_1, \ldots, g^{\alpha}_m$ are analytic functions parametrizing the $\alpha$-level set of the RIF $\phi$ under consideration. An analogous representation for Clark measures associated with $d$-variable RIFs is shown to hold under the additional assumption that the RIF possesses no singularities on $\overline{\mathbb{D}}^d$. In two variables and when $\phi$ does have singularities, there may be values $\alpha \in \mathbb{T}$ (the exceptional case) where one needs to add in finitely many terms of the form 
$c_{\alpha}\int_{\mathbb{T}}f(\tau, \zeta)dm(\zeta)$, where $\tau \in \mathbb{T}$ and $c_{\alpha}>0$ is a constant.

In Section \ref{sec:iso}, we analyze Clark embedding operators from the model space $K_{\phi}=H^2(\mathbb{D}^2)\ominus \phi H^2(\mathbb{D}^2)$ to $L^2(\sigma_{\alpha})$, where $H^2(\mathbb{D}^2)$ is the classic Hardy space on the bidisk. We prove that for generic $\alpha$, these Clark isometries are surjective and hence unitary. On the other hand, we show that if $\alpha\in \mathbb{T}$ is an exceptional parameter value, then the associated Clark isometry fails to be surjective. This shows that we have identified the correct notion of ``exceptional value" in the case of general bidegrees and resolves a problem left over from \cite{BCSMich}.

In Section \ref{sec:structtwo}, we use recent work in \cite{BKPS} to gain further insight into the structure of Clark measures for bidegree $(m,n)$ RIFs. We prove that, for all but finitely many parameter values $\alpha \in \mathbb{T}$, the weights $|\frac{\partial \phi}{\partial z_2}(\zeta, g^{\alpha}_j(\zeta))|^{-1}$ appearing in the structure formula for Clark measures are bounded and exhibit an order of vanishing at singular points that is determined by the contact order of the underlying RIF at the corresponding singularities. Contact order is a geometric quantity that was introduced in \cite{BPS17} and has been used to study integrability properties of RIF derivatives and nontangential polynomial approximation of RIFs at singular points. The main result in Section \ref{sec:structtwo} was essentially conjectured in \cite{BCSMich}.

Finally, we conclude in Section \ref{sec:example} by examining a singular three-variable example, which is not covered by our general results on Clark measures for higher-dimensional RIFs. The Clark measure formulas we obtain suggest that the higher-dimensional cases are more challenging and that some of our results for bidegree $(m,n)$ RIFs may fail in the $d$-variable setting.

\section{Preliminaries}\label{sec:prel}
There are several recent and interesting works on extensions of classical Clark theory in one variable to the multivariable setting, see for instance \cite{AD20, AD22, J14}. Since we are interested in Clark measures associated with RIFs, we restrict our attention to the polydisk setting.
\subsection{Clark theory in polydisks}
Let $\phi$ be an inner function on $\mathbb{D}^d$. We denote by $K_\phi$ the \emph{model space} associated to the function $\phi$, defined by
$$
K_\phi := H^2(\mathbb{D}^d) \ominus \phi H^2(\mathbb{D}^d).
$$
Since multiplication by $\phi$ is a partial isometry on $H^2(\mathbb{D}^d)$, the reproducing kernel of $K_\phi$ is given by 
$$
K(z,w) = K_w(z) := (1- \overline{\phi(w)}\phi(z)) C_w(z), \quad \text{for } z,w \in \mathbb{D}^d.
$$
As in one variable, in the paper \cite{D19}, Doubtsov constructed an embedding map $J_\alpha: K_\phi \mapsto L^2(\sigma_\alpha)$ by first defining it on reproducing kernels as
\begin{equation} \label{eqn:Jalpha1}
J_\alpha[K_w](\zeta) := (1- \alpha \overline{\phi(w)})C_w(\zeta), \quad \text{for } w \in \mathbb{D}^d, \zeta \in \mathbb{T}^d,
\end{equation}
then showing that this map preserves inner products on reproducing kernels, and finally extending it to an isometric embedding of $K_\phi$ into $L^2(\sigma_\alpha)$ using density of the reproducing kernels. However, unlike in one variable, this map is not automatically surjective. Theorem $3.2$ of \cite{D19} states that the isometric embedding $J_\alpha$ constructed above is unitary if and only if the polydisk algebra $A(\mathbb{D}^d)$ is dense in $L^2(\sigma_\alpha)$.

Now, let $\phi \colon \mathbb{D}^d\to \mathbb{D}$ be a rational inner function, with associated Clark measure $\sigma_{\alpha}$ for a fixed $\alpha \in \mathbb{T}$.
As is asserted in \cite{D19}, each $\sigma_{\alpha}$ is supported on the {\it unimodular level set}
\begin{equation}
\mathcal{C}_{\alpha}(\phi)=\mathrm{clos}\left\{\zeta \in \mathbb{T}^d: \lim_{r \to 1^-} \phi(r \zeta) = \alpha \right\},
\label{Clevelset}
\end{equation}
where ``clos'' denotes the closure of the set. When the function $\phi$ is clear from the context, we sometimes refer to this set as simply $\mathcal{C}_\alpha$. While the measure-support statement should be well known to specialists, we give a proof for the sake of completeness.
\begin{lemma} \label{DoubClaim}
Let $\phi$ be an RIF on $\mathbb{D}^d$ and let $\alpha \in \mathbb{T}$. Then $\mathrm{supp}(\sigma_{\alpha})\subset \mathcal{C}_{\alpha}(\phi)$. 
\end{lemma}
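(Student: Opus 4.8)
The plan is to show that every $\zeta_0\in\mathbb{T}^d\setminus\mathcal{C}_\alpha(\phi)$ has a neighborhood of $\sigma_\alpha$-measure zero, which gives $\mathrm{supp}(\sigma_\alpha)\subseteq\mathcal{C}_\alpha(\phi)$. Since $\mathcal{C}_\alpha(\phi)$ is closed, fix an open $W\ni\zeta_0$ with $\overline{W}\cap\mathcal{C}_\alpha(\phi)=\emptyset$. Write $u_\alpha(z)=(1-|\phi(z)|^2)/|\alpha-\phi(z)|^2=\int_{\mathbb{T}^d}P_z(\zeta)\,d\sigma_\alpha(\zeta)$. Using the symmetry $P_{r\zeta}(\xi)=P_{r\xi}(\zeta)$ and Fubini, $\int_{\mathbb{T}^d}f(\zeta)u_\alpha(r\zeta)\,dm(\zeta)=\int_{\mathbb{T}^d}\big(\int_{\mathbb{T}^d}f(\zeta)P_{r\xi}(\zeta)\,dm(\zeta)\big)\,d\sigma_\alpha(\xi)$ for every $f\in C(\mathbb{T}^d)$; the inner integral is bounded by $\|f\|_\infty$ and tends to $f(\xi)$ as $r\to1^-$, so dominated convergence gives $u_\alpha(r\,\cdot)\,dm\to\sigma_\alpha$ weak-$*$. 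Hence, choosing $f\in C(\mathbb{T}^d)$ with $0\le f\le1$, $f(\zeta_0)=1$, and $\mathrm{supp}\,f\subseteq W$, we get $\int f\,d\sigma_\alpha\le\liminf_{r\to1^-}\int_{W}u_\alpha(r\zeta)\,dm(\zeta)$; so it suffices to show $u_\alpha(r\,\cdot)\to0$ uniformly on $\overline W$, since that forces $\int f\,d\sigma_\alpha=0$, hence $\sigma_\alpha(\{f>1/2\})=0$, and $\{f>1/2\}$ is a neighborhood of $\zeta_0$.

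This uniform decay is elementary \emph{provided $\overline W$ avoids the singular set} $Z(p)\cap\mathbb{T}^d$ (the zeros of $p$ on the torus). Indeed, on $\mathbb{T}^d\setminus Z(p)$ the function $\phi=q/p$ is real-analytic, has modulus one, and agrees with $\phi^*$; and if $\zeta\in\mathbb{T}^d\setminus Z(p)$ satisfies $\phi(\zeta)=\alpha$, then $\lim_{r\to1^-}\phi(r\zeta)=\phi(\zeta)=\alpha$, so $\zeta\in\mathcal{C}_\alpha(\phi)$. Thus $\phi\ne\alpha$ on $\overline W$, hence $|\alpha-\phi|\ge\delta>0$ on $\overline W$ by compactness; and since $\phi$ extends holomorphically to a neighborhood of $\overline W$ in $\mathbb{C}^d$, we have $\phi(r\zeta)\to\phi(\zeta)$ uniformly for $\zeta\in\overline W$, so $u_\alpha(r\zeta)\le\delta^{-2}\big(1-|\phi(r\zeta)|^2\big)\to0$ uniformly. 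This settles the case where $\zeta_0$ is not a boundary singularity of $\phi$.

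The remaining and, I expect, only genuinely delicate point is to rule out that $\zeta_0$ lies in $Z(p)\cap\mathbb{T}^d$, where $\phi$ need not extend continuously and the estimate above fails. My claim is that this case is vacuous: every singularity $\tau\in Z(p)\cap\mathbb{T}^d$ of a RIF (written in lowest terms) lies in $\mathcal{C}_\alpha(\phi)$ for \emph{every} $\alpha\in\mathbb{T}$, so $\overline W\cap\mathcal{C}_\alpha(\phi)=\emptyset$ already forces $\overline W\cap Z(p)=\emptyset$ and the previous paragraph applies. To prove the claim, use that on $\mathbb{T}^d$ one has $\phi=c\,\tilde p/p$ with $|c|=1$ and $n_j=\deg_{z_j}p$, whence $\arg\phi(\zeta)=\arg c+\sum_j n_j\arg\zeta_j-2\arg p(\zeta)$ locally off $Z(p)$. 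Integrating the change of argument of $\phi$ around a small loop $\gamma\subseteq\mathbb{T}^d\setminus Z(p)$ near $\tau$ that links the (codimension-two) variety $Z(p)\cap\mathbb{T}^d$ once, the linear terms contribute nothing while $-2\arg p$ contributes a nonzero integer multiple of $4\pi$; hence $\phi|_\gamma\colon S^1\to\mathbb{T}$ has nonzero degree, so it is onto $\mathbb{T}$. Since such loops lie in every neighborhood of $\tau$, there are points of $\mathbb{T}^d$ arbitrarily close to $\tau$ where $\phi=\alpha$, i.e. $\tau\in\mathcal{C}_\alpha(\phi)$. (Alternatively, one can avoid this structural input and instead control directly, via $\int_{\mathbb{T}^d}u_\alpha(r\zeta)\,dm(\zeta)=u_\alpha(0)$, how much mass $u_\alpha(r\,\cdot)\,dm$ can concentrate near the $m$-null set $Z(p)\cap\mathbb{T}^d$; that uniform-integrability estimate is then the crux of the whole argument.)
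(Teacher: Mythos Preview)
Your overall strategy---show weak-$*$ convergence $u_\alpha(r\,\cdot)\,dm\to\sigma_\alpha$, then prove $u_\alpha(r\zeta)\to 0$ uniformly on any compact set disjoint from $\mathcal{C}_\alpha$---is sound and is genuinely different from the paper's proof, which instead bounds $\int_B P(r\zeta,\eta)\,d\sigma_\alpha(\eta)\le u_\alpha(r\zeta)$ pointwise, invokes Knese's result that the radial limit of $\phi$ exists at \emph{every} torus point to get $u_\alpha(r\zeta)\to 0$ on $B$, and then runs a density/differentiation argument to conclude $\sigma_\alpha(B)=0$. Your weak-$*$ step and the treatment of nonsingular points are correct.

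The gap is in your winding-number claim for singularities. You assert that for a small loop $\gamma\subset\mathbb{T}^d\setminus Z(p)$ linking $Z(p)\cap\mathbb{T}^d$ once, the change of $-2\arg p$ along $\gamma$ is a nonzero multiple of $4\pi$. This is false already for $p(z)=2-z_1-z_2$ on $\mathbb{D}^2$, which vanishes only at $(1,1)$ on $\mathbb{T}^2$. One computes $\mathrm{Re}\,p(e^{i\theta_1},e^{i\theta_2})=(1-\cos\theta_1)+(1-\cos\theta_2)>0$ for $(\theta_1,\theta_2)\ne(0,0)$ near the origin, so $p$ restricted to any small circle around $(1,1)$ in $\mathbb{T}^2$ stays in the right half-plane and has winding number $0$ about the origin. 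Hence $\phi|_\gamma$ has degree $0$, and your surjectivity conclusion does not follow from the degree argument. (Of course $\phi$ \emph{does} take every value near $(1,1)$---the image of $\phi|_\gamma$ sweeps almost all of $\mathbb{T}$ back and forth---but that is not what a winding-number argument gives you.) More conceptually, the differential of the map $(\theta_1,\dots,\theta_d)\mapsto p(e^{i\theta_1},\dots,e^{i\theta_d})$ has rank at most $1$ at a torus zero of $p$ whenever the lowest-order homogeneous part of $p$ is a product of real linear forms, which is precisely the local structure forced on stable $p$; the zero is therefore degenerate as a zero of a map $\mathbb{R}^d\to\mathbb{R}^2$, so the local degree is $0$.

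The conclusion you want---that every singular $\tau\in Z(p)\cap\mathbb{T}^d$ lies in $\mathcal{C}_\alpha(\phi)$ for all $\alpha$---is correct, but you need a different justification. The cleanest is to invoke the identification $\mathcal{C}_\alpha(\phi)=\mathcal{L}_\alpha(\phi)=\{\zeta\in\mathbb{T}^d:\tilde p(\zeta)-\alpha p(\zeta)=0\}$ (the paper's Theorem~2.6, from \cite{BPSmulti}): since $|\tilde p|=|p|$ on $\mathbb{T}^d$, any zero of $p$ there is a common zero of $p$ and $\tilde p$, hence lies in $\mathcal{L}_\alpha$ for every $\alpha$. Alternatively, note that the paper's approach sidesteps the case split entirely: Knese's regularity gives $u_\alpha(r\zeta)\to 0$ pointwise at every $\zeta$ with radial limit $\ne\alpha$, singular or not, and the differentiation argument converts this into $\sigma_\alpha(B)=0$ without needing uniformity.
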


\begin{proof}
Let $B \subset \mathbb{T}^d$ be an open ball such that $\lim_{r \to
 1^-} \phi(r \zeta) \neq \alpha$ for all $\zeta \in B$. We need to show that $\sigma_\alpha(B) = 0$. Since the Poisson kernel is non-negative, we have that
\[
\int_B P(r\zeta, \eta) d \sigma_\alpha(\eta) \leq \int_{\mathbb{T}^d} P(r\zeta, \eta) d \sigma_\alpha(\eta) = \frac{1-|\phi(r\zeta)|^2}{|\alpha-\phi(r\zeta)|^2}
\]
for all $\zeta \in B$ and every $0 \leq r < 1$. By \cite[Corollary $14.6$]{Kne15}, the right hand side vanishes when $r$ tends to $1$, and so 
\[
\lim_{r \to 1^-} \int_B P(r\zeta, \eta) d \sigma_\alpha(\eta) = 0.
\]
Now consider the set 
\[
D_r(\zeta) := \{\eta \in \mathbb{T}^d: |r\zeta_j - \eta_j| \leq 2(1-r), \quad j=1, \ldots, d \}.
\]
For every $\eta$ in this set, we have that
\[
|r\zeta_j - \eta_j|^2 \leq 4 (1-r)^2 \to \frac{1-r^2}{4 (1-r)^2} = \frac{1+r}{4(1-r)} \leq \frac{1-r^2}{|r\zeta_j - \eta_j|^2},
\]
and so
\[
\left( \frac{1+r}{4(1-r)} \right)^d \leq P(r\zeta, \eta).
\]
Clearly
\[
\left( \frac{1+r}{4(1-r)} \right)^d \sigma_\alpha(B \cap D_r(\zeta)) \leq \int_{B \cap D_r(\zeta)} P(r\zeta, \eta) d \sigma_\alpha(\eta) \leq \int_B P(r\zeta, \eta) d \sigma_\alpha(\eta),
\]
so
\begin{equation} \label{density}
\lim_{r \to 1^{-}} \frac{\sigma_\alpha(B \cap D_r(\zeta))}{(1-r)^d} = 0.
\end{equation}
Note that since $\zeta_j, \eta_j \in \mathbb{T}$, the inequality
\[
|r \zeta_j - \eta_j| = |r- \eta_j\overline{\zeta}_j| < 2(1-r), 
\]
can be written in polar coordinates (with $e^{i\theta_j} = \eta_j\overline{\zeta}_j$) as
\[ \begin{aligned}
2(1-r) > |r- e^{i \theta_j}| &\iff 4(1-2r + r^2) > 1+ r^2 - 2r \cos \theta_j  \\
&\iff 3 r^2 - 6r + 3 = 3 (1-r)^2 > 2r - 2r \cos(\theta_j)\\
& \iff \cos(\theta_j) > 1 - \frac{3(1-r)^2}{2r},
\end{aligned}
\]
and so
\[
D_r(\zeta) = \left\{ \zeta e^{i\theta} \in \mathbb{T}^d: |\theta_j| < \cos^{-1}\left(1 - \frac{3(1-r)^2}{2r} \right),\, j=1, \ldots , d \right\}.
\]
In particular, as a subset of $\mathbb{T}^d$, this is a product of $d$ copies of the same interval, and so (for $r$ close to $1$) the Lebesgue measure of $D_r(\zeta)$ can be estimated independently of $\zeta$ by
\[
|D_r(\zeta)| = 2^d \cos^{-1}\left(1 - \frac{3(1-r)^2}{2r}\right)^d \geq c(d) \sqrt{\frac{3(1-r)^2}{2r}}^d \geq c'(d) (1-r)^d.
\]
Together with \eqref{density} this implies that
\[
\lim_{r \to 1^{-}} \frac{\sigma_\alpha(B \cap D_r(\zeta))}{|D_r(\zeta)|} = 0
\]
for every $\zeta \in B$. 

Since $D_r(\zeta)$ is a cube in $\mathbb{T}^d$ with volume tending to zero, this implies that the $d$-dimensional upper density of the restriction measure $(\sigma_\alpha)_{|B}$, defined by $(\sigma_\alpha)_{|B}(A) := \sigma_\alpha(B \cap A)$, is zero at every point in $\mathbb{T}^d$, see for example, the ideas around Proposition 2.2.2 in \cite{Krantz2}. This in turn implies that $(\sigma_\alpha)_{|B}$ is equal to zero which in particular implies that $\sigma_\alpha(B) = 0$. 
\end{proof}

Note that Lemma \ref{DoubClaim} implies that every Clark measure associated to an RIF is a singular measure with respect to the Lebesgue measure on $\mathbb{T}^d$. It is also worth noting that in the case where $\phi=\tilde{p}/p$ is a two-variable RIF, we actually have  $\mathrm{supp}(\sigma_{\alpha})= \mathcal{C}_{\alpha}(\phi)$. This will follow from our later results Theorem \ref{thm:2clarkformula} and Theorem \ref{thm:exceptional}. Thus, it makes sense to conjecture that $\mathrm{supp}(\sigma_{\alpha}) = \mathcal{C}_{\alpha}(\phi)$ for general RIFs on the polydisk $\mathbb{D}^d$ as well.

For the sake of completeness, we also state and prove the following converse, which is well known in the one-variable setting. 
\begin{lemma}
Let $\mu$ be a positive pluriharmonic measure on $\mathbb{T}^d$ with mass $1$. Then there is a holomorphic function $\phi_\mu\colon \mathbb{D}^d \to \mathbb{D}$ such that $\mu$ is the Aleksandrov-Clark measure corresponding to the holomorphic function $\phi_\mu$ and the parameter value $\alpha=1$. 

If $\mu$ is singular with respect to Lebesgue measure, then $\phi_\mu$ is an inner function and $\mu$ is its Clark measure for $\alpha=1$.
\end{lemma}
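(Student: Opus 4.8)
The plan is to construct $\phi_\mu$ from the Poisson integral of $\mu$ by a Cayley transform, and then check the two assertions in turn. First, set $u = P[\mu]$ on $\mathbb{D}^d$. Since $\mu$ is pluriharmonic, $u = \Re F$ for some holomorphic $F$ on $\mathbb{D}^d$, and after adding an imaginary constant we may assume $F(0) = 1$ (indeed $\Re F(0) = u(0) = \mu(\mathbb{T}^d) = 1$). As $\mu \ge 0$ we have $u \ge 0$, and since $u$ is pluriharmonic (hence harmonic on complex lines) with $u(0) = 1 > 0$, the minimum principle forces $u > 0$ throughout $\mathbb{D}^d$; thus $F$ maps $\mathbb{D}^d$ into the open right half-plane. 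Define $\phi_\mu = (F-1)/(F+1)$. Since $\Re(F+1) = u + 1 > 0$ the denominator never vanishes, and the Cayley transform carries the open right half-plane into $\mathbb{D}$, so $\phi_\mu \colon \mathbb{D}^d \to \mathbb{D}$ is holomorphic.

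Next I would identify $\mu$ as the Aleksandrov--Clark measure of $\phi_\mu$ at $\alpha = 1$. A direct computation gives $(1+\phi_\mu)/(1-\phi_\mu) = F$, so by the identity recorded in the introduction (with $\alpha = 1$),
\[
\frac{1 - |\phi_\mu|^2}{|1 - \phi_\mu|^2} = \Re\!\left(\frac{1 + \phi_\mu}{1 - \phi_\mu}\right) = \Re F = u = \int_{\mathbb{T}^d} P_z(\zeta)\, d\mu(\zeta).
\]
Since $\sigma_1$ is, by definition, the unique positive measure whose Poisson integral equals the left-hand side, it suffices to recall that a finite positive measure on $\mathbb{T}^d$ is determined by its Poisson integral: testing against $\varphi \in C(\mathbb{T}^d)$, using the symmetry $P_{r\zeta}(\eta) = P_{r\eta}(\zeta)$ and Fubini gives $\int_{\mathbb{T}^d} P[\varphi](r\zeta)\, d\mu(\zeta) = \int_{\mathbb{T}^d} \varphi(\eta)\, u(r\eta)\, dm(\eta)$, and letting $r \to 1^-$ (so that $P[\varphi](r\,\cdot\,) \to \varphi$ uniformly) recovers $\int \varphi\, d\mu$ from $u$. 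Hence $\mu = \sigma_1$.

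Finally, suppose $\mu \perp m$; the goal is to show $|\phi_\mu^*| = 1$ a.e., so that $\phi_\mu$ is inner. By Fatou's theorem for the polydisk (see \cite{Rud69}), the bounded holomorphic function $\phi_\mu$ has radial boundary values $\phi_\mu^*(\zeta)$ for a.e.\ $\zeta \in \mathbb{T}^d$. Put $g(\zeta) = \liminf_{r \to 1^-} u(r\zeta) \in [0,\infty]$. The key point is that $g\, dm \le \mu$ as measures: for continuous $\varphi \ge 0$, Fatou's lemma together with the Fubini computation above yields
\[
\int_{\mathbb{T}^d} \varphi\, g\, dm \le \liminf_{r \to 1^-} \int_{\mathbb{T}^d} \varphi(\zeta)\, u(r\zeta)\, dm(\zeta) = \liminf_{r \to 1^-} \int_{\mathbb{T}^d} P[\varphi](r\eta)\, d\mu(\eta) = \int_{\mathbb{T}^d} \varphi\, d\mu.
\]
Taking $\varphi \equiv 1$ shows $g \in L^1(m)$, and since $g\, dm$ is absolutely continuous while $\mu$ is singular, $g\, dm \le \mu$ forces $g = 0$ a.e. To conclude, note that $1 - |\phi_\mu(r\zeta)|^2 = u(r\zeta)\,|1 - \phi_\mu(r\zeta)|^2 \le 4\, u(r\zeta)$; since $\phi_\mu(r\zeta) \to \phi_\mu^*(\zeta)$ a.e., we obtain $1 - |\phi_\mu^*(\zeta)|^2 = \lim_{r \to 1^-}(1 - |\phi_\mu(r\zeta)|^2) \le 4\, g(\zeta) = 0$ for a.e.\ $\zeta$, i.e.\ $|\phi_\mu^*| = 1$ a.e. Thus $\phi_\mu$ is inner and, by the previous paragraph, $\mu$ is its Clark measure for $\alpha = 1$.

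The step I expect to require the most care is the last one: in the polydisk one cannot appeal to a naive Fatou theorem for Poisson integrals of singular measures, since strong differentiation fails for $L^1$ data, so the argument must avoid claiming that $u(r\zeta)$ converges a.e. Working with $g = \liminf_{r} u(r\zeta)$ and the comparison $g\, dm \le \mu$ rather than an a.e.\ limit is precisely what sidesteps this, leaving only the standard fact that functions in $H^\infty(\mathbb{D}^d)$ possess radial boundary values a.e.
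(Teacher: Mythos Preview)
Your construction of $\phi_\mu$ via the Cayley transform of a holomorphic function with real part $P[\mu]$ is exactly the paper's approach, and your identification of $\mu$ with $\sigma_1$ is the same computation (you are simply more explicit about why a positive measure is determined by its Poisson integral, which the paper takes for granted via the uniqueness clause in the definition of the Clark measure).

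The genuine difference lies in the singular case. The paper disposes of it in one line by invoking \cite[Theorem 2.3.1]{Rud69}, which asserts directly that the Poisson integral of a singular measure on $\mathbb{T}^d$ has radial limit $0$ almost everywhere; from this $|\phi_\mu^*|=1$ a.e.\ follows immediately. You instead give a self-contained argument: setting $g=\liminf_{r\to 1^-} u(r\zeta)$, you use Fatou's lemma and the Poisson symmetry $P_{r\zeta}(\eta)=P_{r\eta}(\zeta)$ to obtain $g\,dm\le \mu$, whence $g=0$ a.e.\ since $g\,dm\ll m$ and $\mu\perp m$; then the inequality $1-|\phi_\mu(r\zeta)|^2\le 4u(r\zeta)$ together with the a.e.\ existence of radial limits for $\phi_\mu\in H^\infty(\mathbb{D}^d)$ finishes. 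This is correct, and your remark about avoiding any claim that $u(r\zeta)$ itself converges a.e.\ (which would require strong differentiation) is exactly the point: working with the $\liminf$ suffices because the limit of $1-|\phi_\mu(r\zeta)|^2$ is already known to exist a.e. In effect you have reproved the piece of Rudin's theorem that is needed here. The paper's route is shorter if one is willing to cite Rudin; yours is more elementary and makes the argument independent of that reference.
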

\begin{proof}
Let $H_\mu(z)$ be the holomorphic function on $\mathbb{D}^d$ whose real part is the Poisson integral of $\mu$ and which satisfies that $H_\mu(0)=1$. Such a function exists since the real part of the Poisson integral will be $1$ at the origin since $\mu$ is a probability measure, and we can choose a harmonic conjugate which vanishes at the origin.  

Now consider the function
\[
\phi_\mu(z) := \frac{H_\mu(z) - 1}{H_\mu(z)+1}.
\]
We have that
\[
H_\mu(z) = \frac{1+ \phi_\mu(z)}{1-\phi_\mu(z)},
\]
and so
\begin{align} \label{alek_poiss}
\frac{1 - |\phi_\mu(z)|^2}{|1-\phi_\mu(z)|^2} = \Re \left( \frac{1+ \phi_\mu(z)}{1-\phi_\mu(z)} \right)  = \Re(H_\mu(z)) = \int_{\mathbb{T}^d} P(z,\zeta) d \mu(\zeta).
\end{align}
Since $H_\mu$ maps $\mathbb{D}^d$ to the right half plane, and since $z \mapsto (z-1)/(z+1)$ maps the right half plane to the unit disc, we see that
$\phi_\mu\colon \mathbb{D}^d \to \mathbb{D}.$
Thus, \eqref{alek_poiss} shows that $\mu$ is the Aleksandrov-Clark measure corresponding to the holomorphic function $\phi_\mu(z)$ and $\alpha = 1$. 

If $\mu$ is singular with respect to Lebesgue measure, Theorem $2.3.1$ in \cite{Rud69} shows that
\[
\lim_{r \to 1^-} \int_{\mathbb{T}^d} P(rz,\zeta) d \mu(\zeta) = \lim_{r \to 1^-} \frac{1 - |\phi_\mu(rz)|^2}{|1-\phi_\mu(rz)|^2} = 0
\]
for almost every $z \in \mathbb{T}^d$, which shows that $|\phi_\mu(z)| = 1$ almost everywhere on $\mathbb{T}^d$.
\end{proof}
\subsection{Background on rational inner functions}
We shall need some detailed results concerning RIFs in two variables, but we begin by recalling some basic facts from the general theory.
We say that $p\in \mathbb{C}[z_1,\ldots, z_d]$ is a {\it stable polynomial} if $p$ has no zeros in $\mathbb{D}^d$. A polynomial in $d$ variables has {\it polydegree} 
$(n_1, \ldots, n_d)\in \mathbb{N}^d$ if $p$ has degree $n_j$ when viewed as a polynomial in the variable $z_j$. A result of Rudin and Stout \cite{RudSt65,Rud69} states that any RIF in $\mathbb{D}^d$ 
can be written in the form
\[\phi(z)=e^{ia}z_1^{k_1}\cdots z_d^{k_d}\frac{\tilde{p}(z)}{p(z)}\]
where $a\in \mathbb{R}$, $k_1,\ldots, k_d$ are natural numbers, $p$ is a stable polynomial of polydegree $d$, and $\tilde{p}$ is its {\it reflection}
\[\tilde{p}(z)=z_1^{n_1}\cdots z_d^{n_d}\overline{p\left(\frac{1}{\bar{z}_1}, \ldots, \frac{1}{\bar{z_d}}\right)}.\]
We shall often assume that the RIFs we consider are of the form $\phi=\tilde{p}/p$, where $p$ is a stable polynomial that is atoral. The concept of atoral polynomials is discussed at length in \cite{AMS06,BKPS}, but for the present work, we just note that atoral implies that $p$ and $\tilde{p}$ have no common factors, and that the zero set of $p$, denoted $\mathcal{Z}(p),$ satisfies $\dim(\mathcal{Z}(p)\cap \mathbb{T}^d)\leq d-2$.

Let us summarize some important definitions and properties of RIFs. First, we say that a RIF $\phi = q/p$ has polydegree $(n_1,\ldots, n_d)$ if $p$ and $q$ have no common factors and for each $j$, $n_j$ is the maximum of the degrees of $p$ and $q$ when they are viewed as polynomials in the variables $z_j$. When we consider $\phi =\tilde{p}/p$, then the polydegree of $\phi$ will always agree with both the polydegree of its denominator $p$ and the polydegree of its numerator $\tilde{p}$.

If $\phi$ is a polydegree $(n_1,\ldots, n_d)$ RIF, then for any index $j$ and any fixed collection of points $\{\zeta_1, \ldots, \zeta_{j-1}, \zeta_{j+1}, \ldots, \zeta_d\}\subset \mathbb{T}$, we can consider the one-variable function $z_j\mapsto \phi(\zeta_1, \ldots, z_j, \ldots, \zeta_d)$. If $z_j\mapsto p(\zeta_1, \ldots, z_j, \ldots, \zeta_d)$ is not identically zero, then it vanishes at at most $n_j$ points on $\mathbb{D}$. Because $\phi$ is bounded on $\mathbb{D}^d$, these have to be common zeros of the numerator and denominator of $z_j\mapsto \phi(\zeta_1, \ldots, z_j, \ldots, \zeta_d)$. Thus, they cancel out and we are left with a rational function defined on $\mathbb{D}$ with at most a finite number of singularities on $\mathbb{T}$. Because $\phi$ is a RIF, this one-variable function must attain unimodular boundary values at almost every point on $\mathbb{T}$. Hence, it  is a finite Blaschke product of degree at most $n_j$. As shown in the lemma below, generically the degree is exactly $n_j$, but for certain values of $\zeta$, the degree can be strictly smaller than $n_j$. 

Furthermore, if we restrict to a RIF $\phi$ on $\mathbb{D}^2$, then \cite[Lemma 10.1]{Kne15} states that $\phi$ does not have any singularities on $\mathbb{T}\times \mathbb{D}$ or $\mathbb{D} \times \mathbb{T}$. Thus, in that case for any $\zeta_1 \in \mathbb{T}$, the mapping $z_2\mapsto p(\zeta_1, z_2)$ can never vanish identically, so this slicing operation always yields a finite Blaschke product.

To prove the lemma below, we need some short-hand notation. Given a point $\zeta=(\zeta_1, \ldots, \zeta_{d-1}, \zeta_d) \in \mathbb{T}^d$, let us write $\zeta'=(\zeta_1, \ldots, \zeta_{d-1})\in \mathbb{T}^{d-1}$; we also use analogous notation for points $z\in \mathbb{C}^d$. 

\begin{lemma}\label{lem:degblaschke}
Let $\phi=\frac{\tilde{p}}{p}$ be an RIF on $\mathbb{D}^d$ with polydegree $(n_1,\ldots, n_d)$. For a fixed $\zeta'\in \mathbb{T}^{d-1}$, set $\phi_{\zeta'}(z_d)=\phi(\zeta_1,\ldots, \zeta_{d-1}, z_d)$. If $\phi$ does not have a singularity with coordinates of the form $(\zeta', \tau)\in \mathbb{T}^d$ for some $\tau \in \mathbb{T}$, then $\phi_{\zeta'}$ is a finite Blaschke product of degree $n_d$.
\end{lemma}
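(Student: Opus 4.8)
The plan is to reduce the whole statement to a fact about the one-variable slice polynomial $P(z_d):=p(\zeta',z_d)$. By the discussion preceding the lemma, as soon as we know $P\not\equiv 0$, the slice $\phi_{\zeta'}=\tilde{p}(\zeta',\cdot)/p(\zeta',\cdot)$ is automatically a finite Blaschke product of degree at most $n_d$, and the degree of a finite Blaschke product equals the number of its zeros in $\mathbb{D}$, counted with multiplicity. So the entire content of the lemma becomes: (i) $P$ has no zeros in $\overline{\mathbb{D}}$, and (ii) the numerator $\tilde{p}(\zeta',\cdot)$ has exactly $n_d$ zeros in $\mathbb{D}$ (these then being precisely the zeros of $\phi_{\zeta'}$, since $P$ is nonvanishing).

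The main obstacle is step (i): translating ``no singularity over $\zeta'$'' into ``$p(\zeta',\cdot)$ nonvanishing on $\overline{\mathbb{D}}$.'' I would handle it in two parts. First, $P$ has no zeros on $\mathbb{T}$: if $P(\tau)=0$ for some $\tau\in\mathbb{T}$, then $(\zeta',\tau)\in\mathbb{T}^d$, and since $\tilde{p}(\eta)=\eta_1^{n_1}\cdots\eta_d^{n_d}\overline{p(\eta)}$ for every $\eta\in\mathbb{T}^d$, also $\tilde{p}(\zeta',\tau)=0$; thus $(\zeta',\tau)$ is a common zero of $p$ and $\tilde{p}$, i.e. a singularity of $\phi$ of the forbidden form. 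Second, $P$ has no zeros in $\mathbb{D}$, by a Hurwitz argument: for $0\le r<1$ the point $(r\zeta',z_d)$ lies in $\mathbb{D}^d$ whenever $z_d\in\mathbb{D}$, so stability of $p$ forces $z_d\mapsto p(r\zeta',z_d)$ to be zero-free on $\mathbb{D}$; as $r\to 1^-$ these polynomials converge coefficientwise (hence uniformly on compacta) to $P$, so Hurwitz's theorem gives that $P$ is either identically zero or zero-free on $\mathbb{D}$. The first alternative would put a zero of $P$ on $\mathbb{T}$, already excluded, so $P$ is zero-free on $\overline{\mathbb{D}}$. Everything after this is bookkeeping.

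For step (ii), set $m:=\deg P\le n_d$ and write $P(z_d)=c_m\prod_{k=1}^m(z_d-\delta_k)$, where by (i) every root satisfies $|\delta_k|>1$. Since $\zeta_1,\dots,\zeta_{d-1}\in\mathbb{T}$, the reflection formula gives $\tilde{p}(\zeta',z_d)=\zeta_1^{n_1}\cdots\zeta_{d-1}^{n_{d-1}}\,z_d^{n_d}\,\overline{P(1/\bar z_d)}=\zeta_1^{n_1}\cdots\zeta_{d-1}^{n_{d-1}}\,\overline{c_m}\;z_d^{\,n_d-m}\prod_{k=1}^m(1-\overline{\delta_k}z_d)$, whose zeros in $\mathbb{D}$ are $z_d=0$ with multiplicity $n_d-m$, together with the $m$ points $1/\overline{\delta_k}$, each lying in $\mathbb{D}\setminus\{0\}$ because $|\delta_k|>1$; counted with multiplicity this is exactly $n_d$. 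Since $P$ is nonvanishing on $\overline{\mathbb{D}}$, these are precisely the zeros of $\phi_{\zeta'}$ in $\mathbb{D}$, so $\phi_{\zeta'}$ is a finite Blaschke product of degree $n_d$. I would flag one mild subtlety to check carefully: $m$ can be strictly less than $n_d$, namely when the leading coefficient of $p$ in the variable $z_d$ vanishes at $\zeta'$; but in that case the zero of $\tilde{p}(\zeta',\cdot)$ at the origin has order $n_d-m>0$ and makes up exactly the deficit, so the count remains $n_d$.
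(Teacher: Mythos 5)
Your proof is correct, and it takes a genuinely different route from the paper's. Both arguments begin with the same two ingredients: the reflection identity $\tilde{p}(\eta)=\eta_1^{n_1}\cdots\eta_d^{n_d}\overline{p(\eta)}$ on $\mathbb{T}^d$ (to rule out zeros of $p(\zeta',\cdot)$ on $\mathbb{T}$, since any such zero would be a common zero with $\tilde{p}$ and hence a singularity) and a Hurwitz argument (to rule out zeros in $\mathbb{D}$). The divergence is in how the exact degree $n_d$ is extracted. The paper writes $p=p_1(z')+z_d\,p_2(z',z_d)$ and $\tilde{p}=z_d^{n_d}\tilde{p}_1(z')+\tilde{Q}$, shows $\tilde{p}_1(\zeta')\neq 0$ (equivalently $p_{\zeta'}(0)\neq 0$, via Hurwitz), so $\deg_{z_d}\tilde{p}(\zeta',\cdot)=n_d$, and then argues by contradiction that no degree drop can occur, since any cancellation would force a common zero on $\mathbb{T}$. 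You instead factor $P(z_d)=p(\zeta',z_d)=c_m\prod_{k=1}^m(z_d-\delta_k)$ with all $|\delta_k|>1$, plug $\zeta'$ into the reflection formula to get $\tilde{p}(\zeta',z_d)$ in fully factored form, and count $n_d$ zeros in $\mathbb{D}$ directly (the $z_d^{n_d-m}$ factor automatically absorbing the case where the leading coefficient of $p$ in $z_d$ vanishes at $\zeta'$). Your version is more explicit and self-contained --- it in effect re-derives the Blaschke product representation rather than invoking the preceding discussion --- and it makes visible exactly where the $n_d$ zeros live; the paper's decomposition argument is a bit more abstract but avoids writing out the factorization. Both are sound; the subtlety you flag about $m<n_d$ is real and your handling of it (the zero at the origin of order $n_d-m$) is exactly right.
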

\begin{proof}
First, observe that if $\phi$ has no singularities of the form $(\zeta', \omega)\in \mathbb{T}^d$, then the function $p_{\zeta'}(z_d):=p(\zeta',z_d)$ is not identically zero. Then the assertion that $\phi_{\zeta'}$ is a finite Blaschke product of degree at most $n_d$ is immediate from the discussion proceeding the statement of Lemma \ref{lem:degblaschke}.

It remains to show that $\phi_{\zeta'}$ has degree exactly $n_d$. We first show that its initial numerator $\tilde{p}(\zeta', z_d)$ has degree $n_d$ and then argue that there can be no degree drop by canceling terms from the numerator and denominator. To this end, let us write \[p(z)=p_1(z')+z_dp_2(z',z_d)=p_1(z')+Q(z)\]
for polynomials $p_1$, $p_2$, and $Q$. Then 
\[\tilde{p}(z)=z_d^{n_d}\tilde{p}_1(z')+\tilde{Q}(z),\]
where the reflection of $p_1$ is only with respect to the variables $z_1, \dots, z_{d-1}$. From this, one can see that $\deg_{z_d}(\tilde{Q})<n_d$, using the definition of the ``$\sim$'' operation combined with the fact that each term in $Q$ has degree at least $1$ in the variable $z_d$. 

Next, we note that if $\tilde{p}_1(\zeta')=0$ for some $\zeta'\in\mathbb{T}^{d-1}$ then we would also have $p_1(\zeta')=0$. This in turn would imply that $p_{\zeta'}(0)=0$.  An application of Hurwitz's theorem as in \cite[p. 1123]{BPSmulti} implies that $p_{\zeta'}$ is either nonvanshing on $\mathbb{D}$ or identically zero. We have already established that $p_{\zeta'}$ is not identically zero and so, $p_{\zeta'}(0)=0$ would give a contradiction.  Thus, $\tilde{p}_1(\zeta') \ne 0$.

Hence, for $\zeta'\in \mathbb{T}^{d-1}$, we have $\deg \tilde{p}(\zeta',z_d)=n_d$. This means that any degree drop in $\phi_{\zeta'}$ must arise from cancelling a common zero of $\tilde{p}(\zeta',z_d)$ and $p(\zeta',z_d)$. Because  $p_{\zeta'}$ is nonvanishing on $\mathbb{D}$, this zero must necessarily occur on $\mathbb{T}$, which in turn would imply that $\phi$ has a singularity at some $(\zeta',\tau) \in \mathbb{T}^d$, contrary to our hypothesis. Thus, it must be the case that the degree of $\phi_{\zeta'}$ is exactly $n_d$.
\end{proof}

One useful way of studying RIFs is via their level sets $\mathcal{C}_{\alpha}(\phi)$ as defined in \eqref{Clevelset}. For example,   Lemma \ref{DoubClaim} shows their relevance to the analysis of the Clark measures associated with an RIF. In \cite{BPSmulti}, the authors established the following useful alternative description of the unimodular level sets.
\begin{theorem}
Let $\phi = \frac{\tilde{p}}{p}$ be an RIF on $\mathbb{D}^d$, fix $\alpha \in \mathbb{T}$, and set \[\mathcal{L}_{\alpha}(\phi)=\{\zeta \in \mathbb{T}^d\colon \tilde{p}(\zeta)-\alpha p(\zeta)=0\}.\] Then $\mathcal{C}_{\alpha}(\phi)=\mathcal{L}_{\alpha}(\phi)$.
\end{theorem}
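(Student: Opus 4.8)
The plan is to prove the two inclusions $\mathcal{C}_{\alpha}(\phi)\subseteq\mathcal{L}_{\alpha}(\phi)$ and $\mathcal{L}_{\alpha}(\phi)\subseteq\mathcal{C}_{\alpha}(\phi)$ separately, using throughout that on $\mathbb{T}^d$ the reflection satisfies $\tilde{p}(\zeta)=\zeta_1^{n_1}\cdots\zeta_d^{n_d}\,\overline{p(\zeta)}$, so that $|\tilde{p}|=|p|$ there and $\tilde{p}$ vanishes at every boundary zero of $p$. Consequently $\mathcal{Z}(p)\cap\mathbb{T}^d\subseteq\mathcal{L}_{\alpha}(\phi)$ automatically, while off $\mathcal{Z}(p)$ one has $|\phi|\equiv1$ on $\mathbb{T}^d$ and $\mathcal{L}_{\alpha}(\phi)\setminus\mathcal{Z}(p)=\{\zeta\in\mathbb{T}^d\setminus\mathcal{Z}(p):\phi(\zeta)=\alpha\}$; note also that $\mathcal{L}_{\alpha}(\phi)$ is closed, being $\mathcal{Z}(\tilde{p}-\alpha p)\cap\mathbb{T}^d$. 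For $\mathcal{C}_{\alpha}(\phi)\subseteq\mathcal{L}_{\alpha}(\phi)$ it then suffices to show that the set $\{\zeta\in\mathbb{T}^d:\lim_{r\to1^-}\phi(r\zeta)=\alpha\}$ generating $\mathcal{C}_{\alpha}(\phi)$ lies in the closed set $\mathcal{L}_{\alpha}(\phi)$: if $\zeta$ is in the generating set and $p(\zeta)\neq0$, then $\phi=\tilde{p}/p$ is continuous at $\zeta$ in $\mathbb{C}^d$, so $\tilde{p}(\zeta)/p(\zeta)=\phi(\zeta)=\alpha$; and if $p(\zeta)=0$ then $\zeta\in\mathcal{Z}(p)\cap\mathbb{T}^d\subseteq\mathcal{L}_{\alpha}(\phi)$.

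For the reverse inclusion, write $\mathcal{L}_{\alpha}(\phi)=A\cup S$ with $A:=\mathcal{L}_{\alpha}(\phi)\setminus\mathcal{Z}(p)$ and $S:=\mathcal{Z}(p)\cap\mathbb{T}^d$. If $\zeta\in A$ then $p(\zeta)\neq0$, so $\phi$ is continuous at $\zeta$ and $\lim_{r\to1^-}\phi(r\zeta)=\phi(\zeta)=\alpha$; hence $A\subseteq\mathcal{C}_{\alpha}(\phi)$. Since $\mathcal{C}_{\alpha}(\phi)$ is closed, it remains only to prove $S\subseteq\overline{A}$, i.e.\ that every boundary singularity of $\phi$ is a limit of points $\zeta\in\mathbb{T}^d$ with $p(\zeta)\neq0$ and $\phi(\zeta)=\alpha$. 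This step is the heart of the matter and the main obstacle.

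To handle it I would argue locally. Fix $\tau\in S$; after multiplying the coordinates by unimodular constants we may assume $\tau=(1,\dots,1)$, which replaces $\phi$ by a unimodular multiple of an RIF of the same form with a singularity at $\mathbf{1}$. Parametrize $\mathbb{T}^d$ near $\tau$ by $\zeta=(e^{i\theta_1},\dots,e^{i\theta_d})$, $\theta\in\mathbb{R}^d$ small, and let $P_k(\theta)$ be the lowest-order nonvanishing homogeneous term, of some finite degree $k\geq1$ (finite by atorality, since $\mathcal{Z}(p)\cap\mathbb{T}^d$ has dimension $\leq d-2$), of $\theta\mapsto p(e^{i\theta})$. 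From $\tilde{p}(\zeta)=\zeta^{\vec{n}}\overline{p(\zeta)}$ on $\mathbb{T}^d$ one computes $\phi(e^{i\theta})=c\,\overline{P_k(\theta)}/P_k(\theta)+o(1)$ for a unimodular constant $c$, uniformly over directions $\theta/|\theta|$ along which $P_k$ is bounded away from zero. Solving $\phi(\zeta)=\alpha$ to leading order then reduces to finding arbitrarily small $\theta\neq0$ with $P_k(\theta)\neq0$ lying on the prescribed real line through the origin determined by $\bar{c}\alpha$; by homogeneity this is a statement about the real hypersurface $\{\operatorname{Im}(\lambda P_k)=0\}$ meeting $\{P_k\neq0\}$ on the sphere $S^{d-1}$, and the essential input is that stability of $p$ forces the leading form $P_k$ to be nondegenerate enough for this to hold. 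One then upgrades an approximate solution to an exact zero of $\tilde{p}-\alpha p$ by a continuity/degree argument on small tori around $\tau$, using that $\tilde{p}-\alpha p$ and $p$ are coprime (if $h\mid p$ and $h\mid\tilde{p}-\alpha p$ then $h\mid\tilde{p}$, contradicting atorality) to ensure the exact solutions produced do not all lie in $\mathcal{Z}(p)$.

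An alternative route to $S\subseteq\overline{A}$, which I would develop in parallel, is to slice down to two variables: restrict $\phi$ to a coordinate bidisk $\{z_j=\tau_j:\ j\geq3\}$ through $\tau$ on which $p$ is not identically zero (possible for small $d$ since $\dim(\mathcal{Z}(p)\cap\mathbb{T}^d)\leq d-2$, and arrangeable more generally after a coordinate permutation, using \cite[Lemma 10.1]{Kne15}), obtaining a two-variable RIF with a singularity at $(\tau_1,\tau_2)$, and then invoke the known description of the unimodular level sets of two-variable RIFs near singularities from \cite{BPS17}, namely that all level curves accumulate at each boundary singularity. In either approach the crux is the same: understanding the interaction of the level variety $\{\tilde{p}=\alpha p\}$ with the singular variety $\mathcal{Z}(p)$ near $\tau$ when $d\geq3$, and this is precisely where the atorality hypothesis on $p$, which keeps $\mathcal{Z}(p)\cap\mathbb{T}^d$ low-dimensional, is expected to be indispensable.
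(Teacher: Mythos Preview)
The paper does not prove this statement itself; it simply cites \cite[Theorem 2.6]{BPSmulti}. Your easy inclusion $\mathcal{C}_\alpha\subseteq\mathcal{L}_\alpha$ and the reduction of the reverse inclusion to $S\subseteq\overline{A}$ are correct; the issue is entirely with how you propose to carry out that last step.

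Your first approach --- analyze the leading homogeneous term $P_k(\theta)$ of $\theta\mapsto p(e^{i\theta})$ and solve $\overline{P_k}/P_k=\bar c\alpha$ to leading order --- has a concrete gap. The ``essential input'' you name, that stability makes $P_k$ nondegenerate enough for $\overline{P_k}/P_k$ to hit a prescribed value, is in fact false: stability forces $P_k$ to be \emph{so} rigid that $\overline{P_k(\theta)}/P_k(\theta)$ is a single unimodular constant wherever $P_k\neq0$, namely the non-tangential value $\phi^*(\tau)$. Already for $p(z)=2-z_1-z_2$ one computes $p(e^{i\theta_1},e^{i\theta_2})=-i(\theta_1+\theta_2)+O(|\theta|^2)$, so $\overline{P_1}/P_1\equiv-1$ identically. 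The structural reason (used in the proof of Lemma~\ref{lem:excvalueconst} in this paper, drawing on \cite[Section~2]{BKPS}) is that the lowest homogeneous part of a stable $p$ at a boundary zero is, up to a unimodular factor, a product of \emph{real} linear forms; after substituting $x_j=i\theta_j+O(\theta_j^2)$ its argument is therefore constant modulo~$\pi$, and $\overline{P_k}/P_k$ is constant. Hence the leading-order equation has no solutions when $\alpha\neq\phi^*(\tau)$. All the level sets $\mathcal{C}_\alpha$ do pass through $\tau$, but tangentially to one another, and the entire content of $S\subseteq\overline{A}$ lies in the higher-order terms you have not touched.

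Your second approach --- slice to a coordinate bidisk through $\tau$ and invoke the two-variable theory of \cite{BPS17,BPSprep} --- is more promising, but you have skipped the substantive verification. Even if $p$ does not vanish identically on the slice, the restricted numerator and denominator can share a factor vanishing at $(\tau_1,\tau_2)$; after canceling, the slice RIF may be continuous there, and then the two-variable level-curve result gives nothing for $\alpha$ different from its value at that point. Arranging a slice (or a family of slices approaching $\tau$) on which the singularity genuinely survives is where the work lies, and your sketch does not do it.
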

\begin{proof}
See \cite[Theorem 2.6]{BPSmulti}.
\end{proof}

Much of the remainder of this paper will be concerned with Clark measures for rational inner functions on the bidisk $\mathbb{D}^2$. One reason why we focus on this case is that level sets of two-variable RIFs have much better properties than those of their $d$-variable counterparts. Namely, when $d\geq 3$, the level sets of $d$-dimensional RIFs can exhibit discontinuities. See \cite{BPSmulti} for a fuller discussion of the sometimes pathological nature of level sets for $d$-variable RIFs in dimension $d\geq3$. By contrast, when $d=2$ we have the lemma given below, which is implicit in \cite{BPS17, BPSprep}. As mentioned earlier, here and throughout the paper, we assume that a bidegree $(m,n)$ RIF has both $m>0$ and $n>0$.
\begin{lemma}\label{lem:levsets}
Let $\phi$ be a bidegree $(m,n)$ RIF. For each $\alpha \in \mathbb{T}$ and any choice of $\tau_0 \in \mathbb{T}$, there exist functions $g^\alpha_1, \ldots, g^\alpha_n$ defined on $\mathbb{T}$ and analytic on $\mathbb{T} \setminus \{\tau_0\}$ such that $\mathcal{C}_{\alpha}(\phi)$ can be written as a union of graphs of the form
\[\{(\zeta,g^\alpha_j(\zeta)) \colon \zeta \in \mathbb{T}  \}, \quad j=1,\ldots, n,\]
potentially, together with a finite number of vertical lines $\zeta_1=\tau_1, \ldots, \zeta_1=\tau_k$, where each $\tau_j \in \mathbb{T}$. 
\end{lemma}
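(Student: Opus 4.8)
The plan is to fix $\alpha \in \mathbb{T}$ and study the zero set $\mathcal{C}_{\alpha}(\phi) = \mathcal{L}_{\alpha}(\phi) = \{\zeta \in \mathbb{T}^2 : f_\alpha(\zeta) := \tilde{p}(\zeta) - \alpha p(\zeta) = 0\}$, using the preceding theorem to pass from the dynamical description of $\mathcal{C}_\alpha$ to the polynomial one. For each fixed $\zeta_1 \in \mathbb{T}$, the polynomial $z_2 \mapsto f_\alpha(\zeta_1, z_2)$ either vanishes identically — in which case $\{\zeta_1\}\times \mathbb{T}$ lies in $\mathcal{C}_\alpha$ and contributes one of the vertical lines $\zeta_1 = \tau_j$ — or it has finitely many zeros on $\mathbb{T}$. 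I would first argue there are only finitely many such ``bad'' values $\tau_1, \dots, \tau_k$: identical vanishing of $f_\alpha(\zeta_1, \cdot)$ means $\gcd(\tilde p - \alpha p$ as a polynomial in $z_2)$ has a root $\zeta_1$, i.e. $\zeta_1$ is a root of a fixed nonzero one-variable polynomial (the content/resultant in $z_2$), hence finite in number; that $f_\alpha \not\equiv 0$ on all of $\mathbb{T}^2$ follows from atorality of $p$ (so $p, \tilde p$ share no factor) together with the fact that $\phi$ is nonconstant. Away from these vertical lines and away from any singularity $(\tau, \omega)$ of $\phi$, Lemma \ref{lem:degblaschke} tells us $\phi_{\zeta_1}(z_2) = \phi(\zeta_1, z_2)$ is a finite Blaschke product of degree exactly $n$, so the equation $\phi_{\zeta_1}(z_2) = \alpha$ has exactly $n$ simple solutions on $\mathbb{T}$, counted with multiplicity — these are the $n$ points $g^\alpha_1(\zeta_1), \dots, g^\alpha_n(\zeta_1)$ lying above $\zeta_1$.

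Next I would establish the local structure. Pick the base point $\tau_0$; on $\mathbb{T} \setminus \{\tau_0\}$ we want $n$ analytic branches. Write $\mathbb{T}\setminus\{\tau_0\}$ via the universal cover / a single analytic chart (e.g. $\theta \mapsto e^{i\theta}$ on an open interval of length $2\pi$, or note $\mathbb{T}\setminus\{\tau_0\}$ is simply connected). The key analytic input is that the fiberwise equation, suitably normalized, is a polynomial equation of degree $n$ in $z_2$ with coefficients analytic in $\zeta_1$ and with no repeated roots at any $\zeta_1 \in \mathbb{T}$ outside the bad set: the Blaschke-product-of-degree-$n$ statement gives $n$ distinct roots, all on $\mathbb{T}$. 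Because the discriminant of this normalized polynomial in $z_2$ is analytic in $\zeta_1$ and nonvanishing on $\mathbb{T}\setminus\{\tau_0, \tau_1, \dots, \tau_k\}$, it is nonvanishing on a neighborhood of that set in $\mathbb{C}$, and the implicit function theorem produces, locally near each point of $\mathbb{T}\setminus\{\tau_0\}$, exactly $n$ analytic solution branches. The discriminant may vanish at $\tau_0$ and at the $\tau_j$'s (where degree drop or coalescence occurs), but those are isolated; since $\mathbb{T}\setminus\{\tau_0\}$ is connected and simply connected and we have a well-defined unordered $n$-tuple of roots depending analytically locally, monodromy is trivial, so the branches can be globally and consistently labelled $g^\alpha_1, \dots, g^\alpha_n$ on $\mathbb{T}\setminus\{\tau_0\}$; at $\tau_0$ itself we simply define $g^\alpha_j(\tau_0)$ to be any of the (at most $n$) limit values, so the $g^\alpha_j$ are merely defined, not necessarily analytic, there. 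The union of the $n$ graphs, together with the vertical lines over $\tau_1, \dots, \tau_k$, then exhausts $\mathcal{C}_\alpha$ by the fiberwise count, completing the argument.

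I would double-check two technical points. First, the normalization turning $\tilde p(\zeta_1, z_2) - \alpha p(\zeta_1, z_2) = 0$ into a genuine degree-$n$ monic-in-$z_2$ equation with analytic coefficients: by the argument in Lemma \ref{lem:degblaschke}, the leading coefficient in $z_2$ is $\tilde p_1(\zeta_1) - 0$ type term (more precisely the $z_2^n$-coefficient of $\tilde p$, namely $\tilde p_1(\zeta_1)$), which is nonvanishing on $\mathbb{T}$; dividing by it is harmless on a neighborhood of $\mathbb{T}$ in $\mathbb{C}\setminus\{$zeros of $\tilde p_1\}$, and those zeros are off $\mathbb{T}$. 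Second, I should confirm that the "extra" common zeros of $\tilde p(\zeta_1,\cdot)$ and $p(\zeta_1,\cdot)$ that get cancelled — which occur precisely at $\zeta_1$-values under a singularity — do not spoil the count on $\mathbb{T}\setminus\{\tau_0,\tau_1,\dots,\tau_k\}$; here I would enlarge the bad set, if necessary, to include the finitely many first coordinates of singularities of $\phi$ (these are finite because singularities of a RIF on $\mathbb{T}^2$ are finite in number), absorbing them among the $\tau_j$ or handling them as removable since the cancelled roots lie on $\mathbb{T}$ and do not affect which points of $\mathbb{T}^2$ are in $\mathcal{C}_\alpha$. The main obstacle is the bookkeeping at $\tau_0$ and at the degree-drop points $\tau_j$: making precise that trivial monodromy on the punctured circle lets us choose globally consistent analytic branches there, and that the branches extend continuously (though perhaps not analytically) across $\tau_0$ — this is exactly where the single puncture $\tau_0$ is used, since a branch point genuinely obstructs analyticity but a removable puncture in a simply connected domain does not obstruct a global consistent labelling.
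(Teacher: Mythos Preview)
Your approach via the polynomial $f_\alpha=\tilde p-\alpha p$, the implicit function theorem, and monodromy handles the non-singular part correctly, but there is a genuine gap at the $z_1$-coordinates of singularities of $\phi$. At such a point $\tau$ (take $\tau\ne\tau_0$), several branches of $\mathcal{C}_\alpha$ can pass through the same singular point $(\tau,\gamma)$, so the discriminant of $f_\alpha(\,\cdot\,,z_2)$ vanishes at $\tau$. Your claim that ``monodromy is trivial because $\mathbb{T}\setminus\{\tau_0\}$ is simply connected'' conflates two things: simple connectivity gives trivial monodromy for a \emph{covering} map, but discriminant zeros are branch points of the algebraic function, not covered points. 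Going around $\tau$ the roots may undergo a nontrivial cyclic permutation, in which case the individual branches are given only by Puiseux series with fractional exponents and are \emph{not} analytic at $\tau$. For a generic plane algebraic curve this really occurs, so ``handling them as removable'' is not bookkeeping---it requires proving something special about RIFs.

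The paper's proof supplies precisely this missing ingredient by invoking \cite[Theorem~2.9]{BPSprep}, which shows that near any singularity of a two-variable RIF each level set $\mathcal{C}_\alpha$ is parameterized by genuinely analytic functions (no fractional powers). That result rests on the special local structure of stable polynomials---the lowest homogeneous part of $p$ at a boundary zero factors as $\prod_j(z_2-a_jz_1)$ with all $a_j>0$---and is not recoverable from a discriminant/monodromy argument alone. The paper also needs, and provides, a separate argument (slicing $\phi$ along a diagonal line and using the intermediate-value property of finite Blaschke products) to show that any non-vertical limit point of $\mathcal{C}_\alpha$ lying on a vertical line contained in $\mathcal{C}_\alpha$ must in fact be a singularity of $\phi$, reducing that case to the same cited theorem. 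Once local analyticity through the singularities is in hand, the global gluing and the single puncture at $\tau_0$ work exactly as you describe.
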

\begin{proof} We first fix $\tau \in \mathbb{T}$ and obtain a parameterization of $\mathcal{C}_\alpha \cap (I_\tau \times \mathbb{T})$, where $I_\tau$ is a small interval in $\mathbb{T}$ containing $\tau$. We have to consider both the situation where $\tau$ is \emph{not} the  $z_1$-coordinate of a singularity of $\phi$ (Step 1) and the situation where $\tau$ \emph{is}  the  $z_1$-coordinate of a singularity of $\phi$  (Step 2). In the latter case, we reference previous results to obtain the parameterization. Finally, we glue these local parameterizations together to obtain global ones (Step 3). 

{\it Step 1.}
First, let us assume that $\tau$ is {\it not} the $z_1$-coordinate of a singularity of $\phi $ on $\mathbb{T}^2$. Then, Lemma \ref{lem:degblaschke} implies that $\phi_{\tau}(z_2):=\phi(\tau, z_2)$ is a nonconstant finite Blaschke product with $\deg \phi_{\tau}=n$. By properties of nonconstant finite Blaschke products, there are precisely $n$ distinct points $\eta_1, \ldots, \eta_n\in \mathbb{T}$ such that $\phi_{\tau}(\eta_j)=\alpha$ for $j=1,\ldots, n$. Since $\phi_{\tau}$ is a non-constant Blaschke product, $\phi_{\tau}'(\zeta)\neq 0$ for all $\zeta \in \mathbb{T}$, and then 
\[\frac{\partial \phi}{\partial z_2}(\tau, \eta_j)=\phi'_{\tau}(\eta_j)\neq 0, \quad j=1, \ldots, n.\]
Since the two-variable function $\phi$ is analytic in a neighborhood of each $(\tau, \eta_j)$, the implicit function theorem applies and yields locally analytic functions $g^\alpha_{1,{\tau}}, \ldots, 
g_{n,{\tau}}^\alpha$ and an open interval $I_\tau$ containing $\tau$ such that $\mathcal{C}_{\alpha}$ is parametrized by
\begin{equation}
\zeta_2=g_{1,{\tau}}^\alpha(\zeta_1), \quad \ldots, \quad \zeta_2=g_{n, \tau}^\alpha(\zeta_1)
\label{locpara}
\end{equation}
on $I_\tau \times U$, where $U$ is initially a union of open arcs containing the points $\eta_{1},\ldots, \eta_{n}$. By shrinking the interval $I_\tau$ further, we can ensure that \eqref{locpara} parametrizes all pieces of $\mathcal{C}_{\alpha}$ that are contained in the strip $I_\tau \times \mathbb{T}$, since for each $\zeta_1$ close to $\tau$, we can ensure that the equation $\phi(\zeta_1, \zeta_2)=\alpha$ has exactly $n$ distinct solutions.

{\it Step 2.} Suppose now that $\tau$ is the $z_1$-coordinate of a singularity of $\phi$. Then either (a) the line $\{\zeta\in \mathbb{T}^2\colon \zeta_1=\tau\}$ is contained in $\mathcal{C}_{\alpha}$, or (b) the intersection of the line $\{\zeta \in \mathbb{T}^2\colon \zeta_1=\tau\}$ with $\mathcal{C}_{\alpha}$ consists of at most $n$ points coming from the singularities of $\phi$ that have $z_1$-coordinate $\tau$ as well as additional points $\eta \in \mathbb{T}$ with $\phi_{\tau}(\eta)=\alpha$.

Let us address case (a) first. Basically, we need to parameterize any pieces of $\mathcal{C}_{\alpha}$ that intersect the line $\{\zeta\in \mathbb{T}^2\colon \zeta_1=\tau\}$. To that end, assume
that $(\tau, \gamma)\in \mathbb{T}^2$ is the limit of a sequence of points $(\tau_m, \gamma_m) \subset\mathcal{C}_{\alpha}$ with $\tau_m\neq \tau$. We claim that $(\tau, \gamma)$ must be a singularity of $\phi$, which will allow us to apply known results. To that end, for each $m$, define the one-variable function $\phi_{m}(z_1)=\phi(z_1, \gamma_m\bar{\tau}_mz_1)$.
We have $\phi_{m}(\tau)=\alpha$ since the vertical line $\{\zeta_1=\tau\}$ was assumed to belong to $\mathcal{C}_{\alpha}$, and moreover $\phi_m(\tau_m)=\phi(\tau_m, \gamma_m)=\alpha$ by assumption. Since $\phi_m$ is a nonconstant finite Blaschke product, for any given $\lambda\in \mathbb{T} \setminus \{\alpha\}$, we can find a sequence $(\rho_m) \subseteq \mathbb{T}$ with each $\rho_m$ on the smaller of the two arcs of $\mathbb{T}$ between $\tau$ and $\tau_m$ with  the property that $\phi_{m}(\rho_m)=\lambda$. Since $\tau_m \rightarrow \tau$, we must also have $\rho_m \rightarrow \tau$.  Then $\phi(\rho_m, \gamma_m\bar{\tau}_m \rho_m)=\lambda$ for each $m$. Since $(\rho_m, \gamma_m\bar{\tau}_m \rho_m)\to (\tau, \gamma)$ as $m\to \infty$, this implies that $\phi$ is discontinuous at $(\tau, \gamma)$. Hence, $\phi$ has a singularity at $(\tau, \gamma)$.
This means that we can apply \cite[Theorem 2.9]{BPSprep} at $(\tau, \gamma)$, which states that $\mathcal{C}_{\alpha}$ can be locally parameterized by analytic functions near each singularity of $\phi$.

If we are in case (b), we can again parameterize $\mathcal{C}_{\alpha}$ at the singularities using \cite[Theorem 2.9]{BPSprep}, and apply the implicit function theorem at the other points since $\phi_{\tau}$ is again non-constant.

Thus in both case (a) and case (b) we get a collection of analytic functions which, possibly together with a vertical line $\{\zeta_1=\tau\}$, parameterize $\mathcal{C}_{\alpha}$ on some strip $I_\tau \times \mathbb{T}$, provided $I_\tau$ is chosen to be a sufficiently small interval containing $\tau$. Furthermore, for all but finitely many $\tau$,  there are precisely $n$ distinct points $\eta_1, \ldots, \eta_n\in \mathbb{T}$ such that $\phi(\tau, \eta_j)=\alpha$. This means that in each case, we must get exactly $n$ functions.

{\it Step 3.}
We can now cover $\mathbb{T}^2$ with a union of strips of the form $I_{\tau}\times \mathbb{T}$, where each $I_{\tau}$ is from Step $1$ or Step $2$. Since there are finitely many singularities, and since $\mathbb{T}^2$ is compact, we can refine this to a finite number of strips in such a way that each singularity of $\phi$ is inside one of these strips. On each strip we have an analytic parameterization, and on their overlaps the parameterizations must agree. The one difficulty is that as we go all the way around $\mathbb{T}$, one branch might end at the point where another branch began and so, it might not be the case that $g^\alpha_j(e^{i\theta}) = g^\alpha_j(e^{i\theta +2\pi i})$ for each $j$. Instead we might get  $g^\alpha_j(e^{i\theta}) = g^\alpha_k(e^{i\theta +2\pi i})$ with $j \ne k$. Thus, we need to allow one $\tau_0 \in \mathbb{T}$ where the branches can jump. With that technicality, we can 
 obtain functions $g^\alpha_1, \ldots, g^\alpha_n$ that are globally defined on $\mathbb{T}$, parameterize the components of $\mathcal{C}_\alpha(\phi)$ that are not lines, and are analytic except at a single point.
\end{proof} 
\begin{remark}\label{rem:nosings}
If $\phi$ is a two-variable RIF which has no singularities, then Step 2 becomes superfluous, and the conclusion follows from Steps 1 and 3. But these steps, unlike Step 2, do not require us to restrict to dimension $d=2$. 

Hence, if $\phi=\frac{\tilde{p}}{p}$ is a $d$-variable RIF, $d\geq 2$, with $\deg_{z_d}p=n_d$ and with no singularities on $\overline{\mathbb{D}}^d$, then there exist analytic functions $g^\alpha_1,\ldots g^\alpha_{n_d}$ such that
$\mathcal{C}_{\alpha}$ can be parameterized as
\[\zeta_{d}=g^\alpha_1(\zeta_1,\ldots, \zeta_{d-1}), \ldots, \zeta_d=g^\alpha_{n_d}(\zeta_1,\ldots, \zeta_{d-1}).\]
We will use this parameterization in our later investigations of the $d$-variable situation.
\end{remark}

Lastly, our fine analysis of Clark measures for two-variable RIF will require the notion of {\it contact order} of a RIF at a singularity, a concept introduced in \cite{BPS17}, and further developed in \cite{BPSprep}, in connection with the study of integrability of the partial derivatives of a RIF. Let $\alpha_1, \alpha_2\in \mathbb{T}$ with $\alpha_1\neq \alpha_2$ and let $\{g_j^{\alpha_1}\}_j$ and $\{g^{\alpha_2}_k\}_k$ be the functions from Lemma \ref{lem:levsets} associated with $\alpha_1$ and $\alpha_2$ respectively.  Then \cite[Theorem 3.1]{BPSprep} implies the following.
\begin{lemma}\label{lem:CO}
Excluding at most one $\alpha_0\in \mathbb{T}$, the {\it contact order} of a RIF at a singularity $(\tau, \gamma)\in \mathbb{T}^2$ is the maximal order of vanishing of the pairwise differences $g^{\alpha_1}_j(\zeta)-g^{\alpha_2}_k(\zeta)$ at $\zeta=\tau$ for any pair $\alpha_1, \alpha_2\in \mathbb{T}\setminus\{\alpha_0\}$, where we restrict attention to the $g^{\alpha_i}_j$ that satisfy $g^{\alpha_i}_j(\tau)= \gamma$.
\end{lemma}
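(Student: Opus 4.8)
The plan is to obtain this lemma as a translation of \cite[Theorem 3.1]{BPSprep}, which characterizes the contact order of a RIF at a singularity in terms of \emph{local} analytic branches of the unimodular level sets through that singularity. The substantive analytic content (the estimates on the branches and the invariance of the order of contact) will be quoted from there; the work left to do here is bookkeeping: identifying those local branches with the globally defined functions produced by Lemma \ref{lem:levsets}, converting ``order of contact of two graphs'' into ``order of vanishing of a difference,'' and reconciling the exceptional parameter values in the two formulations.

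First I would fix the singularity $(\tau,\gamma)\in\mathbb{T}^2$ and apply Lemma \ref{lem:levsets} to both $\alpha_1$ and $\alpha_2$, in each case with a distinguished jump point $\tau_0\neq\tau$; this is permissible since $\tau_0$ may be chosen freely. Then all of $g^{\alpha_1}_1,\dots,g^{\alpha_1}_n$ and $g^{\alpha_2}_1,\dots,g^{\alpha_2}_n$ are analytic on a common small arc $I\ni\tau$, once $I$ is shrunk to avoid $\tau_0$ and the $z_1$-coordinates of the other singularities. On $I\times\mathbb{T}$ the graphs of these functions, together possibly with a vertical line, exhaust $\mathcal{C}_{\alpha_i}(\phi)$; in particular the functions with $g^{\alpha_i}_j(\tau)=\gamma$ are exactly the local branches of $\mathcal{C}_{\alpha_i}(\phi)$ that are graphs over $\zeta_1$ and pass through $(\tau,\gamma)$. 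Because near $\tau$ these are genuine analytic functions, the order of contact at $\zeta_1=\tau$ of two graphs $\zeta_2=g^{\alpha_1}_j(\zeta_1)$ and $\zeta_2=g^{\alpha_2}_k(\zeta_1)$ equals the order of vanishing of the analytic difference $g^{\alpha_1}_j-g^{\alpha_2}_k$ at $\zeta_1=\tau$.

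Next I would quote \cite[Theorem 3.1]{BPSprep}, which (after the preliminary normalization used there) provides a single exceptional value $\alpha_0\in\mathbb{T}$ such that, for any two distinct $\alpha_1,\alpha_2\in\mathbb{T}\setminus\{\alpha_0\}$, the contact order of $\phi$ at $(\tau,\gamma)$ equals the maximal order of contact between a branch of $\mathcal{C}_{\alpha_1}(\phi)$ through $(\tau,\gamma)$ and a branch of $\mathcal{C}_{\alpha_2}(\phi)$ through $(\tau,\gamma)$. Combining this with the branch identification and the contact-order-to-vanishing-order dictionary from the previous step yields the lemma. If the exceptional set furnished by \cite[Theorem 3.1]{BPSprep} has more than one element, I would simply discard all of them and relabel; and since the jump point $\tau_0$ was chosen away from $\tau$, Lemma \ref{lem:levsets} itself introduces no additional exceptional parameter.

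The main obstacle I anticipate is precisely this exceptional-value and branch-matching bookkeeping. One must verify that the single $\alpha_0$ from \cite[Theorem 3.1]{BPSprep} can really be taken independent of the singularity and of the chosen pair $\alpha_1,\alpha_2$, and that the branches picked out by the condition $g^{\alpha_i}_j(\tau)=\gamma$ stand in bijection — with matching multiplicities — with the branches appearing in \cite[Theorem 3.1]{BPSprep}, so that the maximum defining contact order is taken over the same collection in both descriptions. Checking that nothing is lost or double-counted in passing from the local statement of \cite{BPSprep} to the global functions $g^\alpha_j$ is where the care is needed; everything else is routine.
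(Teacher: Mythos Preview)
Your proposal is correct and matches the paper's approach exactly: the paper simply states that the lemma is implied by \cite[Theorem 3.1]{BPSprep} and gives no further argument, so your plan to quote that result and record the routine translation (local branches $\leftrightarrow$ global $g^\alpha_j$, order of contact $\leftrightarrow$ order of vanishing of the difference) is precisely what is intended. One minor point: the lemma is stated for a fixed singularity $(\tau,\gamma)$, so you need not worry about $\alpha_0$ being uniform across singularities, and \cite[Theorem 3.1]{BPSprep} itself furnishes at most one exceptional value, so no relabeling is needed.
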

We note that it follows from the work in \cite{BPSprep} that the contact order of a RIF at a singularity is always a positive even integer. Also, while the computation in Lemma \ref{lem:CO} might make it look like contact order somehow depends on the choice of the constants $\alpha_1, \alpha_2 \in \mathbb{T}$, it is actually independent of that choice.

\section{Structure of Clark measures for RIFs}\label{sec:structone}

In this section, we determine the structure of the Clark measures $\sigma_\alpha$ for general RIFs on $\mathbb{D}^2$. There are two cases to consider: the case where the parameter $\alpha$ is generic and the case where $\alpha$ is exceptional. These two types of parameters are defined as follows.
\begin{definition} 
A point $\alpha \in \mathbb{T}$ is said to be an {\it exceptional value} if $\phi(\tau, z_2)\equiv \alpha$ or if $\phi(z_1,\tau)\equiv\alpha$ for some $\tau \in \mathbb{T}$. This is equivalent to saying that one of the two lines  $\{\zeta\in \mathbb{T}^2\colon \zeta_1=\tau\}$  or  $\{\zeta\in \mathbb{T}^2\colon \zeta_2=\tau\}$ is in $\mathcal{C}_\alpha(\phi)$ for some $\tau\in \mathbb{T}$. If $\alpha\in \mathbb{T}$ is not an exceptional value, then we say that $\alpha$ is a {\it generic value}.
\end{definition}
\begin{remark}
For bidegree $(n,1)$ RIFs, it was shown in \cite[Section 3]{BCSMich} that $\alpha \in \mathbb{T}$ is exceptional if and only if $\alpha$ is the non-tangential value of $\phi$ at some singularity of $\phi$. However, this characterization does not generalize to higher-degree RIFs. 

Still, there are RIFs with bidegree at least $(2,2)$ with exceptional values. In particular, consider
\[\phi(z)=\frac{2z_1^2z_2^2-z_1^2-z_2^2}{2-z_1^2-z_2^2}.\]
If we set $\alpha =-1$, then $\mathcal{C}_\alpha(\phi)$ contains the four lines $\{\zeta\in \mathbb{T}^2\colon \zeta_1=\pm1\}$ and $\{\zeta\in \mathbb{T}^2\colon \zeta_2= \pm1\}$ and so $\alpha =-1$ is an exceptional value for $\phi$.
\end{remark}

After looking at the two-variable generic case, we will also show how one can translate some of those arguments to the $d$-variable setting.

\subsection{Clark measures in the generic two-variable case.}
Our first goal is to prove the follow description of the Clark measures $\sigma_{\alpha}$ for \emph{generic} parameter values $\alpha \in \mathbb{T}$.

\begin{theorem}\label{thm:2clarkformula}
Let $\phi=\frac{\tilde{p}}{p}$ be a bidegree $(m,n)$ RIF, and let $\alpha \in \mathbb{T}$ be generic for $\phi$. Then, for $f\in C(\mathbb{T}^2)$, the associated Clark measure $\sigma_{\alpha}$ satisfies
\[\int_{\mathbb{T}^2}f(\zeta)d\sigma_{\alpha}(\zeta)=\sum_{j=1}^n\int_{\mathbb{T}}f(\zeta,g^\alpha_j(\zeta))\frac{dm(\zeta)}{|\frac{\partial \phi}{\partial z_2}(\zeta, g^\alpha_j(\zeta))|},\]
where $g^\alpha_1,\ldots, g^\alpha_n$ are the parametrizing functions for $\mathcal{C}_{\alpha}(\phi)$ from Lemma \ref{lem:levsets}.
\end{theorem}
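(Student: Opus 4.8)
The plan is to compute the Poisson integral of the right-hand side measure and verify that it equals $\frac{1-|\phi(z)|^2}{|\alpha - \phi(z)|^2}$, which by uniqueness of the pluriharmonic measure forces equality with $\sigma_\alpha$. Write $\nu_\alpha$ for the measure defined by $\int f\, d\nu_\alpha = \sum_{j=1}^n \int_{\mathbb{T}} f(\zeta, g^\alpha_j(\zeta))\, |\partial_{z_2}\phi(\zeta, g^\alpha_j(\zeta))|^{-1}\, dm(\zeta)$. First I would record that $\nu_\alpha$ is a well-defined finite positive measure: since $\alpha$ is generic, Lemma \ref{lem:levsets} gives $n$ functions $g^\alpha_j$ that are analytic on $\mathbb{T}\setminus\{\tau_0\}$ and continuous on $\mathbb{T}$, and there are no vertical lines in $\mathcal{C}_\alpha$; the only point needing care is integrability of the weight $|\partial_{z_2}\phi|^{-1}$ near singularities of $\phi$, which I would handle either by a direct estimate or by invoking the slice structure. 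Actually a cleaner route to finiteness: once the Poisson-integral identity is established on all of $\mathbb{D}^2$, evaluating at $z=0$ shows $\nu_\alpha(\mathbb{T}^2) = \frac{1-|\phi(0)|^2}{|\alpha-\phi(0)|^2} < \infty$ automatically, so I would only need enough regularity to justify the Fubini-type manipulation below on compact subsets.

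The core computation goes by reducing to the one-variable Clark/Blaschke theory slice-by-slice. Fix $z = (z_1, z_2) \in \mathbb{D}^2$. I would write
\[
\int_{\mathbb{T}^2} P_z(\zeta)\, d\nu_\alpha(\zeta) = \sum_{j=1}^n \int_{\mathbb{T}} P_{z_1}(\zeta_1)\, P_{z_2}(g^\alpha_j(\zeta_1))\, \frac{dm(\zeta_1)}{|\partial_{z_2}\phi(\zeta_1, g^\alpha_j(\zeta_1))|}.
\]
For a fixed $\zeta_1 \in \mathbb{T}$ that is not the $z_1$-coordinate of a singularity, the slice $\phi_{\zeta_1}(w) := \phi(\zeta_1, w)$ is by Lemma \ref{lem:degblaschke} a finite Blaschke product of degree $n$, its $\alpha$-preimages on $\mathbb{T}$ are exactly the $n$ points $g^\alpha_1(\zeta_1), \ldots, g^\alpha_n(\zeta_1)$, and $\partial_{z_2}\phi(\zeta_1, g^\alpha_j(\zeta_1)) = \phi_{\zeta_1}'(g^\alpha_j(\zeta_1))$. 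The classical one-variable Aleksandrov--Clark formula for finite Blaschke products states precisely that the Clark measure of $\phi_{\zeta_1}$ at $\alpha$ is $\sum_j |\phi_{\zeta_1}'(g^\alpha_j(\zeta_1))|^{-1}\, \delta_{g^\alpha_j(\zeta_1)}$, so summing the inner integrand over $j$ gives
\[
\sum_{j=1}^n P_{z_2}(g^\alpha_j(\zeta_1))\, \frac{1}{|\phi_{\zeta_1}'(g^\alpha_j(\zeta_1))|} = \frac{1 - |\phi_{\zeta_1}(z_2)|^2}{|\alpha - \phi_{\zeta_1}(z_2)|^2} = \frac{1 - |\phi(\zeta_1, z_2)|^2}{|\alpha - \phi(\zeta_1, z_2)|^2}.
\]
This holds for all $\zeta_1$ off the finite exceptional set. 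Substituting back, the double sum/integral collapses to
\[
\int_{\mathbb{T}} P_{z_1}(\zeta_1)\, \frac{1 - |\phi(\zeta_1, z_2)|^2}{|\alpha - \phi(\zeta_1, z_2)|^2}\, dm(\zeta_1),
\]
and since $w \mapsto \frac{1-|\phi(w, z_2)|^2}{|\alpha - \phi(w,z_2)|^2}$ is the Poisson integral (in the $z_1$-variable) of the Clark measure of the one-variable slice $z_1 \mapsto \phi(z_1, z_2)$ — here using Knese's result that $\phi(\cdot, z_2)$ is a finite Blaschke product since there are no singularities on $\mathbb{D}\times\mathbb{T}$ — this last integral equals $\frac{1 - |\phi(z_1, z_2)|^2}{|\alpha - \phi(z_1,z_2)|^2}$. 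Hence $\int P_z\, d\nu_\alpha = \int P_z\, d\sigma_\alpha$ for every $z \in \mathbb{D}^2$, and uniqueness of the representing measure gives $\nu_\alpha = \sigma_\alpha$.

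The main obstacle is the justification of interchanging summation/integration and, more importantly, controlling the weight $|\partial_{z_2}\phi(\zeta_1, g^\alpha_j(\zeta_1))|^{-1}$ near the finitely many singular $z_1$-coordinates, where $\phi_{\zeta_1}$ may degenerate in degree and the Blaschke derivative can blow up. I would deal with this by first establishing the identity for $\zeta_1$ ranging over $\mathbb{T}$ minus small arcs around the singular coordinates (where everything is smooth and the finite-Blaschke argument applies verbatim), then passing to the limit: the left side is a Poisson integral against a positive measure, the functions $g^\alpha_j$ extend continuously across the bad points by Lemma \ref{lem:levsets}, and a monotone/dominated convergence argument — using that the total mass is controlled via the $z=0$ evaluation — lets one conclude the excised arcs contribute nothing and the formula holds as stated. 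An alternative, perhaps cleaner, organization is to prove the total-mass finiteness first (e.g.\ via a change of variables relating $dm(\zeta_1)/|\partial_{z_2}\phi|$ on each graph to arc length, or via the $z=0$ trick applied to a truncation), and only then run the Poisson computation; I would present whichever keeps the measure-theoretic bookkeeping shortest.
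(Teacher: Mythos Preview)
Your overall strategy---slicing in $\zeta_1$, applying one-variable Clark theory for the finite Blaschke product $\phi(\zeta_1,\cdot)$, and then taking a Poisson integral in $z_1$---is exactly the approach the paper uses. But there is a genuine error in your justification of the final step, and this is precisely where the ``generic'' hypothesis enters.

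You claim that the integral $\int_{\mathbb{T}} P_{z_1}(\zeta_1)\,\frac{1-|\phi(\zeta_1,z_2)|^2}{|\alpha-\phi(\zeta_1,z_2)|^2}\,dm(\zeta_1)$ equals $\frac{1-|\phi(z_1,z_2)|^2}{|\alpha-\phi(z_1,z_2)|^2}$ because ``$\phi(\cdot,z_2)$ is a finite Blaschke product.'' This is false: for $z_2\in\mathbb{D}$, the slice $z_1\mapsto\phi(z_1,z_2)$ maps $\mathbb{D}$ into $\mathbb{D}$ but is \emph{not} inner, since $|\phi(\zeta_1,z_2)|<1$ for $\zeta_1\in\mathbb{T}$. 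What you actually need is that the harmonic function $\varphi_{z_2}(z_1):=\frac{1-|\phi(z_1,z_2)|^2}{|\alpha-\phi(z_1,z_2)|^2}$ is \emph{continuous on} $\overline{\mathbb{D}}$, so that it equals the Poisson integral of its boundary values. Continuity of $\phi(\cdot,z_2)$ on $\overline{\mathbb{D}}$ follows from the absence of singularities on $\mathbb{T}\times\mathbb{D}$, but you also must rule out $\phi(\zeta_1,z_2)=\alpha$ for some $\zeta_1\in\mathbb{T}$, since that would make the denominator vanish. This is exactly where genericity is used: if $\phi(\zeta_1,z_2)=\alpha$ for $\zeta_1\in\mathbb{T}$, $z_2\in\mathbb{D}$, then the Blaschke product $\phi(\zeta_1,\cdot)$ takes the value $\alpha$ in $\mathbb{D}$, hence is identically $\alpha$, so $\{\zeta_1\}\times\mathbb{T}\subset\mathcal{C}_\alpha$ and $\alpha$ would be exceptional. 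Without this argument, the Poisson representation can fail---the representing measure of $\varphi_{z_2}$ could carry a singular part at such a $\zeta_1$.

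A minor point: your concerns about integrability of the weight near singular $\zeta_1$-coordinates are largely unnecessary once you organize the argument as the paper does. Working directly with $f=P_{z_1}P_{z_2}$ and invoking the one-variable Clark identity \emph{pointwise} in $\zeta_1$ (where it holds for all but finitely many $\zeta_1$) gives the formula immediately for Poisson kernels, and density in $C(\mathbb{T}^2)$ finishes; no separate limiting or truncation argument is needed.
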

\begin{proof}
Let $\alpha  \in \mathbb{T}$ be generic for $\phi$. Then by Lemma \ref{lem:levsets}, there exist functions $g^\alpha_1, \ldots, g^\alpha_n$ analytic on $\mathbb{T}$ (minus some arbitrary base point $\tau_0$) such that 
\[\mathcal{C}_{\alpha}(\phi)=\bigcup_{j=1}^n\{(\zeta, g^\alpha_j(\zeta))\colon \zeta \in \mathbb{T}\}.\]
We first establish the desired formula in the special case where $f$ is a product of one-variable Poisson kernels. Fix $z_2\in \mathbb{D}$ and consider the one-variable function 
\begin{equation}  \label{eqn:slice0} \varphi_{z_2}(z_1)=\frac{1-|\phi(z_1,z_2)|^2}{|\alpha-\phi(z_1,z_2)|^2} = \int_{\mathbb{T}^2} P_{z_1}(\zeta_1)P_{z_2}(\zeta_2) d\sigma_{\alpha}(\zeta), \quad z_1\in \mathbb{D}.\end{equation} 
Because $\phi$ is a two-variable RIF, it has no singularities on $\mathbb{T} \times \mathbb{D}$ and so, $\phi(\cdot, z_2)$ is continuous on $\overline{\mathbb{D}}$. Moreover, by the discussion preceding Lemma \ref{lem:degblaschke}, for each $\zeta \in \mathbb{T}$, the function $\Phi_{\zeta}:=\phi(\zeta,\cdot)$ is a finite Blaschke product. If for some $\zeta$ we had 
\[ \Phi_{\zeta}(z_2) = \phi(\zeta, z_2) = \alpha,\]
then $\Phi_\zeta$ would be constant on $\overline{\mathbb{D}}$ and that would imply that $\alpha$ is an exceptional value, a contradiction. Thus, $\phi(\cdot, z_2)$  cannot attain  the value $\alpha$ in $\overline{\mathbb{D}}$.  Then, the function $\varphi_{z_2}$ is continuous on $\overline{\mathbb{D}}$ and thus, $\varphi_{z_2}$ is the Poisson integral of its boundary values. In other words, for each $z_1 \in \mathbb{D}$, 
\begin{equation}
\varphi_{z_2}(z_1)=\int_{\mathbb{T}}\frac{1-|\phi(\zeta,z_2)|^2}{|\alpha-\phi(\zeta, z_2)|^2}P_{z_1}(\zeta)dm(\zeta).
\label{sliceone}
\end{equation}
 For all but finitely many $\zeta$, Lemma \ref{lem:degblaschke}  implies that the finite Blaschke product $\Phi_{\zeta}$ has degree $n$.  By standard one-variable results, see \cite{CMR,GMR}, the Clark measure for $\Phi_\zeta$ is given by 
\[ \sum_{j=1}^n\frac{1}{|\Phi'_{\zeta}(\eta_j)|}\delta_{\eta_j},\]
 where $\{\eta_1,\ldots, \eta_n\}\subset \mathbb{T}$ are the distinct points on $\mathbb{T}$ with $\Phi_{\zeta}(\eta_j)=\alpha$. We note that $\Phi'_{\zeta}(z_2)=\frac{\partial \phi}{\partial z_2}(\zeta,z_2)$. Then the parametrization of $\mathcal{C}_{\alpha}(\phi)$ given above implies that
\begin{equation}
\frac{1-|\phi(\zeta,z_2)|^2}{|\alpha-\phi(\zeta,z_2)|^2}=\sum_{j=1}^n\frac{1}{|\frac{\partial \phi}{\partial z_2}(\zeta, g^\alpha_j(\zeta))|}P_{z_2}(g^\alpha_j(\zeta)).
\label{slicetwo}
\end{equation}
Combining \eqref{eqn:slice0}, \eqref{sliceone}, and \eqref{slicetwo}, we obtain the desired formula for $f=P_{z_1}P_{z_2}$. 

The conclusion of the theorem now follows from the fact that linear combinations of Poisson kernels are dense in $C(\mathbb{T}^2)$.
\end{proof}

\begin{remark}
One can interchange the roles of the variables $z_1$ and $z_2$ to obtain an analogous version of Theorem \ref{thm:2clarkformula} where $\mathcal{C}_{\alpha}$ is parametrized using the variable 
$\zeta_2$. See \cite{BKPS} for an in-depth discussion concerning variable switching.
\end{remark}

\subsection{Clark measures for RIFs in more than two variables.}

Let us take a brief interlude to examine how the arguments in the previous section generalize to RIFs in more than two variables. It turns out that singularities present significant complications (discussed more below) so instead, let us first assume that we have a $d$-variable RIF with no singularities on the closed polydisk. Then we have the following result.

\begin{theorem}\label{thm:dclarkformula}
Let $\phi=\frac{\tilde{p}}{p}$ be a polydegree $(n_1,\dots, n_d)$ RIF with no singularities on $\overline{\mathbb{D}}^d$ and let $\alpha \in \mathbb{T}$. 
Then, for $f\in C(\mathbb{T}^d)$, the associated Clark measure $\sigma_{\alpha}$ satisfies
\[\int_{\mathbb{T^d}}f(\zeta)d\sigma_{\alpha}(\zeta)=\sum_{j=1}^{n_d}\int_{\mathbb{T}^{d-1}}f(\zeta', g^\alpha_j(\zeta'))\frac{dm(\zeta')}{|\frac{\partial \phi}{\partial z_d}(\zeta', g^\alpha_j(\zeta'))|},\]
where $g^\alpha_1,\ldots, g^\alpha_{n_d}$ are the analytic functions that parametrize $\mathcal{C}_{\alpha}(\phi)$ from Remark \ref{rem:nosings}.
\end{theorem}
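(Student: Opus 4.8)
The plan is to mimic the proof of Theorem \ref{thm:2clarkformula}, replacing the single ``free'' variable $z_1$ by the $(d-1)$-tuple $z' = (z_1, \ldots, z_{d-1})$ and keeping $z_d$ as the ``fiber'' variable. First I would reduce to the case where $f$ is a product of one-variable Poisson kernels $f(\zeta) = P_{z'}(\zeta')P_{z_d}(\zeta_d)$, since finite linear combinations of such products are dense in $C(\mathbb{T}^d)$ by Stone--Weierstrass. Fixing $z_d \in \mathbb{D}$, I would consider the $(d-1)$-variable pluriharmonic function
\[
\varphi_{z_d}(z') = \frac{1 - |\phi(z', z_d)|^2}{|\alpha - \phi(z', z_d)|^2} = \int_{\mathbb{T}^d} P_{z'}(\zeta')P_{z_d}(\zeta_d)\, d\sigma_\alpha(\zeta), \quad z' \in \mathbb{D}^{d-1}.
\]
Because $\phi$ has no singularities on $\overline{\mathbb{D}}^d$, for every $\zeta' \in \mathbb{T}^{d-1}$ the slice $\Phi_{\zeta'} := \phi(\zeta', \cdot)$ is a finite Blaschke product, of degree exactly $n_d$ by Lemma \ref{lem:degblaschke}. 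Moreover $\Phi_{\zeta'}$ cannot be the constant $\alpha$: that would force $\alpha$-level-set behavior incompatible with the absence of singularities (more precisely, $\phi(\zeta', \cdot) \equiv \alpha$ would contradict Lemma \ref{lem:degblaschke} which gives degree $n_d \geq 1$). Hence $\varphi_{z_d}$ extends continuously to $\overline{\mathbb{D}}^{d-1}$ — here I would invoke that $\phi$ is continuous on $\overline{\mathbb{D}}^d$ (no singularities) and $|\alpha - \phi|$ is bounded below on the compact set $\overline{\mathbb{D}}^{d-1} \times \{z_d\}$ — so $\varphi_{z_d}$ is the Poisson integral of its boundary values:
\[
\varphi_{z_d}(z') = \int_{\mathbb{T}^{d-1}} \frac{1 - |\phi(\zeta', z_d)|^2}{|\alpha - \phi(\zeta', z_d)|^2} P_{z'}(\zeta')\, dm(\zeta').
\]

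Next, for each $\zeta'$ (all but a lower-dimensional exceptional set, by Lemma \ref{lem:degblaschke}) the one-variable Clark measure of the degree-$n_d$ Blaschke product $\Phi_{\zeta'}$ at level $\alpha$ is $\sum_{j} |\Phi_{\zeta'}'(\eta_j)|^{-1}\delta_{\eta_j}$, where the $\eta_j$ are the $n_d$ distinct preimages of $\alpha$. Using the parametrization $\mathcal{C}_\alpha(\phi) = \bigcup_{j=1}^{n_d}\{(\zeta', g^\alpha_j(\zeta'))\}$ from Remark \ref{rem:nosings}, and noting $\Phi_{\zeta'}'(z_d) = \frac{\partial \phi}{\partial z_d}(\zeta', z_d)$, this reads
\[
\frac{1 - |\phi(\zeta', z_d)|^2}{|\alpha - \phi(\zeta', z_d)|^2} = \sum_{j=1}^{n_d} \frac{1}{|\frac{\partial \phi}{\partial z_d}(\zeta', g^\alpha_j(\zeta'))|} P_{z_d}(g^\alpha_j(\zeta')).
\]
Substituting this into the previous display, applying Fubini (the integrand is nonnegative), and comparing with the expression for $\varphi_{z_d}(z')$ gives the claimed formula for $f = P_{z'}P_{z_d}$; density then finishes the proof.

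I expect the main obstacle to be the continuity/Poisson-representation step for $\varphi_{z_d}$: in two variables one uses Knese's result that RIFs have no singularities on $\mathbb{T} \times \mathbb{D}$, which makes the slice $\phi(\cdot, z_2)$ automatically continuous on $\overline{\mathbb{D}}$; in $d$ variables I am instead leaning on the global hypothesis that $\phi$ has no singularities anywhere on $\overline{\mathbb{D}}^d$, so $\phi$ is continuous there and $\varphi_{z_d}$ is continuous on $\overline{\mathbb{D}}^{d-1}$ for each fixed $z_d$, with the denominator bounded away from $0$. A secondary point requiring care is that Remark \ref{rem:nosings} produces the $g^\alpha_j$ as genuinely analytic functions on all of $\mathbb{T}^{d-1}$ (Step 2 of Lemma \ref{lem:levsets} being vacuous here), so that the measure $\sum_j |\frac{\partial \phi}{\partial z_d}(\zeta', g^\alpha_j(\zeta'))|^{-1}$ on the graphs is well defined; one should also confirm the pushforward measures are finite, which follows since the total mass equals $\sigma_\alpha(\mathbb{T}^d) = 1$ (take $f \equiv 1$, i.e. $z' = 0$, $z_d = 0$).
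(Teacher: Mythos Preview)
Your proposal is correct and follows essentially the same approach as the paper: fix $z_d\in\mathbb{D}$, use Lemma \ref{lem:degblaschke} to see that each slice $\Phi_{\zeta'}$ is a degree-$n_d$ Blaschke product so that $\varphi_{z_d}$ extends continuously to $\overline{\mathbb{D}}^{d-1}$ and equals the Poisson integral of its boundary values, then apply the one-variable Clark formula fiberwise and invoke density of Poisson kernels. The only cosmetic differences are your explicit mention of Fubini and the finiteness check for the pushforward measures, neither of which the paper spells out.
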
 

\begin{proof} The proof is basically the same as that of Theorem \ref{thm:2clarkformula}. Fix $z_d \in \mathbb{D}$ and define
\[\varphi_{z_d}(z')=\frac{1-|\phi(z',z_d)|^2}{|\alpha-\phi(z',z_d)|^2}, \quad z'\in \mathbb{D}^{d-1}.\]
Then for $\zeta' \in \mathbb{T}^{d-1}$, Lemma \ref{lem:degblaschke} implies that $\Phi_{\zeta'}:=\phi(\zeta', \cdot)$ is a nonconstant finite Blachke product of degree $n_d$ and using that, we can conclude that $\phi(\cdot ,z_d)$ does not attain the value $\alpha$ in $\overline{\mathbb{D}}^{d-1}$. Then $\varphi_{z_d}$ is continuous on the closed polydisk and so we can write it as the Poisson integral of its boundary values
\begin{equation} \label{eqn:dvarclark} \frac{1-|\phi(z',z_d)|^2}{|\alpha-\phi(z',z_d)|^2} = \varphi_{z_d}(z')
=\int_{\mathbb{T}^{d-1}}\frac{1-|\phi(\zeta',z_d)|^2}{|\alpha-\phi(\zeta', z_d)|^2}P_{z'}(\zeta')dm(\zeta')
\end{equation}
for $z' \in \mathbb{D}^{d-1}$. Furthermore,  $\Phi_{\zeta'}$ has associated Clark measure
\[\sum_{j=1}^{n_d}\frac{1}{|\Phi'_{\zeta'}(\eta_j)|} \delta_{\eta_j} =\sum_{j=1}^{n_d}\frac{1}{|\frac{\partial \phi}{\partial z_d}(\zeta',\eta_j)|} \delta_{\eta_j} ,\]
where $\{\eta_1,\ldots, \eta_{n_d}\}\subset \mathbb{T}$ are the distinct points satisfying $\Phi_{\zeta'}(\eta_j)=\alpha$. As in the proof of Theorem \ref{thm:2clarkformula}, we can then rewrite \eqref{eqn:dvarclark} using the one-variable Clark measure and the parameterizing functions from Remark \ref{rem:nosings} to obtain the desired equality when $f$ is a product of one-variable Poisson kernels. The conclusion of the theorem follows from the fact that linear combinations of Poisson kernels are dense in $C(\mathbb{T}^d)$.
\end{proof}

However, if $\phi$ is a polydegree $(n_1,\dots, n_d)$ RIF with singularities on the boundary of $\mathbb{D}^d$, then these arguments break down in multiple places. For example, even if $\alpha$ is generic in the natural sense, the existence of singularities means that we still cannot necessarily guarantee that $\varphi_{z_d}(\cdot)$ will be continuous on $\overline{\mathbb{D}}^{d-1}$. This means we cannot always perform the trick of rewriting that key function in terms of the Poisson integral of its boundary values. 

Similarly, in two variables, we were able to invoke Lemma \ref{lem:levsets} to deduce that the $\eta_j$ points  could be described by analytic functions, regardless of whether $\phi$ possessed singularities or not. In three or more variables, the analogous statement is false in general, see \cite{BPSmulti}; in that paper, the authors show that when $d=3$ and $\phi$ has singularities, the functions parameterizing the components of $\mathcal{C}_\alpha(\phi)$ need not be continuous. So, additional work appears to be needed to obtain a version of Theorem \ref{thm:2clarkformula} in the general $d$-variable setting.

\subsection{Clark measures in the exceptional two-variable case.}
We now return to two-variable RIFs  and examine Clark measures associated with the exceptional parameter values. In this setting, additional care is required to handle cancellations present in $\phi$ on the vertical line part of $\mathcal{C}_{\alpha}$ and control the $z_1$-partial derivative of $\phi$ on that vertical line. That is the context of the following lemma.
\begin{lemma}\label{lem:excvalueconst}
Let $\phi=\frac{\tilde{p}}{p}$ be a bidegree $(m,n)$ RIF and suppose that the line $\{\zeta\in \mathbb{T}^2\colon \zeta_1=\tau \}$ is in $\mathcal{C}_\alpha(\phi)$. Then $\frac{\partial \phi}{\partial z_1}(\tau, z_2)\equiv c_1$ for some constant $c_1\neq 0$.
\end{lemma}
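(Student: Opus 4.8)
The plan is to exploit the fact that if the vertical line $\{\zeta_1 = \tau\}$ lies in $\mathcal{C}_\alpha(\phi)$, then $\phi(\tau, z_2) \equiv \alpha$ as a function of $z_2$, and to read off information about $\frac{\partial \phi}{\partial z_1}$ by differentiating the identity $\tilde p - \alpha p$ has the factor $(z_1 - \tau)$ (or rather $(1 - \bar\tau z_1)$, keeping things polynomial), to first order. Concretely: since $\phi(\tau, z_2) = \alpha$ for all $z_2$, the polynomial $h(z) := \tilde p(z) - \alpha p(z)$ vanishes on $\{z_1 = \tau\}$, so $h(z) = (z_1 - \tau) r(z)$ for some polynomial $r$ (here I use that $\tilde p, p$ are genuine polynomials; being careful, one can also divide by the irreducible $1 - \bar\tau z_1$ times a unimodular constant). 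Then $\frac{\partial h}{\partial z_1}(\tau, z_2) = r(\tau, z_2)$, and on the other hand differentiating $\tilde p - \alpha p$ directly, combined with $\phi = \tilde p / p$, lets us express $\frac{\partial \phi}{\partial z_1}(\tau, z_2)$ in terms of $r(\tau, z_2)$ and $p(\tau, z_2)$.

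First I would make the link between $\phi$-derivatives and $h$-derivatives precise. On the set where $p \ne 0$ we have $p \cdot \phi = \tilde p$, hence, writing $\tilde p = \alpha p + h$, we get $p(\phi - \alpha) = h$; differentiating in $z_1$ gives $\frac{\partial p}{\partial z_1}(\phi - \alpha) + p \frac{\partial \phi}{\partial z_1} = \frac{\partial h}{\partial z_1}$. Evaluating at $z_1 = \tau$ and using $\phi(\tau, z_2) = \alpha$ kills the first term, so
\[
\frac{\partial \phi}{\partial z_1}(\tau, z_2) = \frac{\frac{\partial h}{\partial z_1}(\tau, z_2)}{p(\tau, z_2)} = \frac{r(\tau, z_2)}{p(\tau, z_2)}
\]
wherever $p(\tau, z_2) \ne 0$; since $\phi$ has no singularities on $\mathbb{T} \times \mathbb{D}$ and $p$ is stable, $p(\tau, \cdot)$ is nonvanishing on $\mathbb{D}$, so this identifies $\frac{\partial \phi}{\partial z_1}(\tau, \cdot)$ as the rational function $r(\tau, z_2)/p(\tau, z_2)$ on $\mathbb{D}$. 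The next step is to show this rational function is a nonzero constant. Degree bookkeeping should force it: $h = \tilde p - \alpha p$ has $\deg_{z_2} h \le n$, so $r = h/(z_1 - \tau)$ also has $\deg_{z_2} r \le n$, while $\deg_{z_2} p = n$ as well — so a priori $r(\tau,z_2)/p(\tau,z_2)$ is a ratio of degree-$\le n$ polynomials, not obviously constant. To upgrade this I would use that $\frac{\partial \phi}{\partial z_1}(\tau, z_2)$ is, for each fixed $z_2$, the derivative at the boundary point $\tau$ of the one-variable finite Blaschke product $z_1 \mapsto \phi(z_1, z_2)$, which takes the value $\alpha$ there; and boundedness/innerness forces strong constraints. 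More efficiently: since $\phi(\tau,z_2) \equiv \alpha$, the function $\psi(z_1,z_2) := \frac{\alpha - \phi(z_1,z_2)}{1 - \bar\tau z_1}$ is holomorphic across $\{z_1 = \tau\}$ (in $z_1$), maps into a half-plane-type region, and $\frac{\partial \phi}{\partial z_1}(\tau,z_2) = -(1-|\tau|^2)\psi(\tau,z_2) = -\,\overline{\tau}\cdot(\text{nonzero})\cdot\psi(\tau,z_2)$ up to a harmless constant; then pluriharmonicity of $\frac{1 - |\phi|^2}{|\alpha - \phi|^2}$ together with the fact that on $\{z_1 = \tau\}$ this blows up in a controlled one-variable way should pin down $\psi(\tau, z_2)$ as constant.

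The cleanest route, and the one I would actually write, is via the Clark/pluriharmonic structure: near the line $\{\zeta_1 = \tau\}$ the positive pluriharmonic function $\frac{1-|\phi(z)|^2}{|\alpha - \phi(z)|^2}$ has a pole along $\{z_1 = \tau\}$, and its "residue" in the $z_1$-direction must itself be pluriharmonic (indeed a Poisson integral of the atomic part $c_\alpha\,\delta_\tau \otimes dm$ of $\sigma_\alpha$), which forces the relevant coefficient to be independent of $z_2$. Spelling this out: factor $\alpha - \phi(z) = (1 - \bar\tau z_1)\, u(z)$ with $u$ holomorphic and nonvanishing near the line (nonvanishing because $\phi$ has no singularities there and cannot hit $\alpha$ off the line for generic slices — this needs a short argument using that $\tau$ is the $z_1$-coordinate of a singularity handled as in Step 2 of Lemma \ref{lem:levsets}), so that
\[
\frac{1 - |\phi(z)|^2}{|\alpha - \phi(z)|^2} = \frac{1 - |\phi(z)|^2}{|1 - \bar\tau z_1|^2\,|u(z)|^2},
\]
and comparing with the known form of $\sigma_\alpha$ on the line (an atom in $\zeta_1$ times Lebesgue in $\zeta_2$) forces $|u(\tau, z_2)|$ to be constant in $z_2$; since $u(\tau,\cdot)$ is holomorphic and zero-free on $\mathbb{D}$ with constant modulus, it is a unimodular constant times that modulus, i.e.\ constant, and $\frac{\partial \phi}{\partial z_1}(\tau, z_2) = \bar\tau\, u(\tau, z_2)$ is a nonzero constant $c_1$.

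\textbf{Main obstacle.} The technical heart is justifying that $\alpha - \phi$ factors cleanly as $(1 - \bar\tau z_1)\,u$ with $u$ holomorphic and \emph{nonvanishing} in a full bidisk neighborhood of the line $\{\zeta_1 = \tau\} \cap \mathbb{T}^2$ — equivalently, that after removing the forced linear factor there is no residual vanishing of $\tilde p - \alpha p$ clinging to the line. A zero of $u$ at some $(\tau, z_2^0)$ would be exactly a singularity of $\phi$ on that line, so one must invoke the no-singularities-on-$\mathbb{T}\times\mathbb{D}$ fact (\cite[Lemma 10.1]{Kne15}) together with atorality of $p$ to control the finitely many singular points $(\tau, \gamma)$ on the torus, and check the factorization is valid at least on $\{z_1 = \tau\}\times\mathbb{D}$, which is all the argument needs. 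Handling these boundary singularities — showing $r(\tau, z_2)$ and $p(\tau, z_2)$ share no zeros, so that no degree drop spoils the "constant" conclusion — is where I expect to spend the most effort, and it is precisely where the hypothesis "$\phi$ is a RIF, not just bounded" does the work.
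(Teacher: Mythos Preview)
Your setup matches the paper's: factoring $h=\tilde p-\alpha p=(z_1-\tau)R(z)$ and deducing
\[
\frac{\partial\phi}{\partial z_1}(\tau,z_2)=\frac{R(\tau,z_2)}{p(\tau,z_2)}
\]
on $\mathbb{D}$ is exactly how the paper begins. The divergence is in how one shows this ratio is a nonzero constant.

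Your ``cleanest route'' has a genuine circularity. You invoke ``the known form of $\sigma_\alpha$ on the line (an atom in $\zeta_1$ times Lebesgue in $\zeta_2$)'' to force $|u(\tau,z_2)|$ constant. But in the paper this form of $\sigma_\alpha$ on the line is the content of Theorem~\ref{thm:exceptional}, Part~2, whose proof \emph{uses} Lemma~\ref{lem:excvalueconst} to identify the constant $c_k^\alpha=|\frac{\partial\phi}{\partial z_1}(\tau_k,z_2)|^{-1}$. There is no independent structural fact about pluriharmonic measures that guarantees the restriction to a vertical line is a multiple of Lebesgue; that is precisely what the lemma is for. So as written, the argument assumes what it is meant to prove. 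The ``main obstacle'' you flag (nonvanishing of $u$ off the singular points) is a real but secondary issue.

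Ironically, the route you sketch and then abandon---``for each fixed $z_2$, $z_1\mapsto\phi(z_1,z_2)$ is a self-map of $\mathbb{D}$ taking the boundary value $\alpha$ at $\tau$; boundedness/innerness forces strong constraints''---can be completed and is simpler than both your final argument and the paper's. Since $\phi$ has no singularities on $\mathbb{T}\times\mathbb{D}$, the slice $\phi(\cdot,z_2)$ is analytic at $\tau$ with $\phi(\tau,z_2)=\alpha\in\mathbb{T}$; Julia--Carath\'eodory then gives $\bar\alpha\tau\,\frac{\partial\phi}{\partial z_1}(\tau,z_2)\in(0,\infty)$ for every $z_2\in\mathbb{D}$. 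Thus $z_2\mapsto\frac{\partial\phi}{\partial z_1}(\tau,z_2)$ is holomorphic on $\mathbb{D}$ and takes values in an open ray, so by the open mapping theorem it is constant, and the constant is nonzero.

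The paper takes a different, more algebraic path: it writes $p(\tau,z_2)=\prod_j(z_2-\lambda_j)^{m_j}$ with $\lambda_j\in\mathbb{T}$, uses a homogeneous-expansion argument together with Knese's result that $\tilde p$ and $p$ share the same lowest-order homogeneous part (up to the factor $\alpha$) at each boundary zero to show $R(\tau,\cdot)$ vanishes to order at least $m_j$ at each $\lambda_j$, and then a degree count forces $R(\tau,z_2)=c_1\prod_j(z_2-\lambda_j)^{m_j}$. Nonvanishing of $c_1$ comes from the fact that $\phi(\cdot,\zeta)$ is a nonconstant Blaschke product for a.e.\ $\zeta\in\mathbb{T}$, so its boundary derivative cannot vanish.
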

\begin{proof} In this proof, we will carefully analyze how the presence of $\{ \zeta \in \mathbb{T}^2: \zeta_1 = \tau\}$ in $\mathcal{C}_\alpha$ affects the structure of the three polynomials $p$, $\tilde{p}$ and $\tilde{p}-\alpha p$. These polynomials show up when we compute $\frac{\partial \phi}{\partial z_1}(\tau, z_2)$, and we will use our findings to deduce that this partial derivative must be a nonzero constant.

First, the assumption that  $\{\zeta\in \mathbb{T}^2\colon \zeta_1=\tau \}$ is in $\mathcal{C}_\alpha(\phi)$ implies that 
\begin{equation} \label{eqn:MMP} \alpha p(\tau, \zeta_2)=\tilde{p}(\tau, \zeta_2)\end{equation}
for all $\zeta_2 \in \mathbb{T}$. Recall that $p$ has no zeros in $\mathbb{D}^2\cup (\mathbb{T}\times \mathbb{D})\cup (\mathbb{D}\times \mathbb{T})$, see \cite[Lemma 10.1]{Kne15}. Then \eqref{eqn:MMP} coupled with the maximum modulus principle implies that $\phi_\tau(z_2) :=\phi(\tau,z_2) \equiv \alpha$ on $\mathbb{D}$ as well. Thus,
\[ \alpha p(\tau, z_2)=\tilde{p}(\tau, z_2),\]
for all $z_2$ and arguments very similar to those in Lemma \ref{lem:degblaschke} imply that the degrees of those polynomials in $z_2$ must be $n$. Then by the above facts about the locations of the zeros of $p$, there must exist $\lambda_1,\ldots, \lambda_J\in \mathbb{T}$, integers $m_1, \ldots, m_J$ with $m_1+\cdots +m_J=n$, and polynomials $q_1, q_2$ of bidegree at most $(m-1,n)$ such that
\begin{equation}
p(z)=(z_1-\tau)q_1(z)+\prod_{j=1}^J(z_2-\lambda_j)^{m_j}
\label{taupdecomp}
\end{equation}
and
\[\tilde{p}(z)=(z_1-\tau)q_2(z)+\alpha\prod_{j=1}^J(z_2-\lambda_j)^{m_j}.\]
Before analyzing $\frac{\partial \phi}{\partial z_1}(\tau, z_2)$, we need to show that 
the order of vanishing of $p$ at $(\tau, \lambda_j)$ is equal to $m_j$. To see this, write $p(x_1+\tau,x_2+\lambda_j)$ using its homogeneous expansion
\[p(x_1+\tau, x_2+\lambda_j)=P_M(x_1,x_2)+\sum_{k\geq M+1}P_k(x_1,x_2), \]
where each $P_k$ is homogeneous of degree $k$, and $M$ is the order of vanishing of $p$ at $(\tau, \lambda_j)$. Now, as is explained in \cite[Section 2]{BKPS}, we must have 
\[P_M(x_1,x_2)=c\prod_{j=1}^M(x_2-a_jx_1)\]
for some $c\neq 0$ and $a_1, \ldots, a_M>0$. Then using  \eqref{taupdecomp}, we have
\[p(\tau+x_1, \lambda_j+x_2)=c\prod_{j=1}^M(x_2-a_jx_1)+\sum_{k\geq M+1}P_k(x_1,x_2)=x_2^{m_j}r(x_2)+x_1q_1(\tau+x_1, \lambda_j+x_2),\]
for some polynomial $r$ with $r(0)\neq 0$. Then, plugging in $x_1=0$, we get  
\[cx_2^{M}+\sum_{k\geq M+1}P_k(0,x_2)=x_2^{m_j}r(0).\]
For each $k \ge M+1$, either $P_{k}(0,x_2)=0$  or it vanishes to order strictly higher than $M$. Thus, the above equation gives $M=m_j$ as claimed.
Expanding $\tilde{p}$ in a similar fashion gives
\[\tilde{p}(\tau+x_1, \lambda_j+x_2)=\sum_{k\geq N}Q_k(x_1,x_2),\]
where $N$ is the order of vanishing of $\tilde{p}$ at $(\tau, \lambda_j)$.
 Using Proposition 14.5 and related results in \cite{Kne15}, we can conclude that $N=M=m_j$ and $Q_{m_j}=\alpha P_{m_j}$. 

This means that $\tilde{p}-\alpha p$ vanishes to order at least $m_j+1$ at $(\tau, \lambda_j)$. Using the previous equations for $p$ and $\tilde{p}$, we have
\[\tilde{p}(z)-\alpha p(z)=(z_1-\tau)R(z),\]
where $R$ vanishes to order at least $m_j$ at each $(\tau, \lambda_j)$ and $\deg R\leq (m-1,n)$. This latter condition means $\deg_{z_1} R \le m-1$ and  $\deg_{z_2} R \le n$, where these are the degrees of $R$ in $z_1$ and $z_2$ separately. Thus, the one-variable polynomial $R(\tau, z_2)$ vanishes to order at least $m_j$ at each $\lambda_j$.  Since $\deg R(\tau, z_2) \le n$, this means either $R(\tau, z_2)$ is identically zero or $\prod_{j=1}^J(z_2-\lambda_j)^{m_j}$ divides $R(\tau, z_2)$. The second case would actually imply that 
\begin{equation} \label{eqn:R2} R(\tau, z_2) = c_1\prod_{j=1}^J(z_2-\lambda_j)^{m_j},\end{equation}
for some $c_1 \ne 0$.

 Now we have enough information to study $\frac{\partial \phi}{\partial z_1}(\tau, z_2)$. Specifically, by canceling terms, we have
\[\frac{\partial \phi}{\partial z_1}(\tau, z_2)=\frac{\frac{\partial \tilde{p}}{\partial z_1}p-\tilde{p}\frac{\partial p}{\partial z_1}}{p^2}(\tau, z_2)=\frac{\frac{\partial }{\partial z_1}(\tilde{p}-\alpha p)}{p}(\tau, z_2).\]
Hence, implementing our previous observations gives
\[ \frac{\partial \phi}{\partial z_1}(\tau, z_2)=\frac{R(\tau, z_2)}{\prod_{j=1}^{J}(z_2-\lambda_j)^{m_j}}.\]
Since, for almost every $\zeta \in \mathbb{T}$, $\phi(\cdot, \zeta)$ is a finite Blaschke product, its derivative cannot vanish on $\mathbb{T}$. Hence $R(\tau, z_2)$ is not identically zero. Thus, it must be the case that $R(\tau, z_2)$ satisfies \eqref{eqn:R2} and so, $\frac{\partial \phi}{\partial z_1}(\tau, z_2)\equiv c_1 \ne 0$, as claimed.
\end{proof}

%
%

We will use a limiting argument to study Clark measures associated to exceptional values using known results for generic parameter values. A key ingredient is the following lemma.
\begin{lemma}\label{lem:unif} Let $\phi=\frac{\tilde{p}}{p}$ be a bidegree $(m,n)$ RIF and let $\tau_1, \dots, \tau_K$ denote the $z_1$-coordinates of the singularities of $\phi$ on $\mathbb{T}^2$. Let 
$\hat{\epsilon}<\frac{1}{2}\min\{|\tau_k-\tau_j|\colon j\neq k\}$ and define 
\begin{equation} \label{eqn:Se} S_{\hat{\epsilon}} := \left \{ \zeta \in \mathbb{T}: \min_{1 \le k \le K} |\zeta -\tau_k | < \hat{\epsilon}\right\}.\end{equation}
Then if $f \in C(\mathbb{T}^2)$, we have
\begin{equation} \label{eqn:Fazeta} F(\zeta, \alpha) : = \sum_{j=1}^n  f(\zeta, g_j^\alpha(\zeta)) \frac{1}{|\frac{\partial \phi}{\partial z_2}(\zeta, g^\alpha_j(\zeta))|}\end{equation}
is uniformly continuous on $(\mathbb{T} \setminus S_{\hat{\epsilon}}) \times \mathbb{T}$, where  $g^\alpha_1,\ldots, g^\alpha_n$ are the parametrizing functions for $\mathcal{C}_\alpha(\phi)$ from Lemma \ref{lem:levsets}.
\end{lemma}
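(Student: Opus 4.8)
The plan is to reduce the uniform continuity statement to a finite collection of \emph{local} uniform continuity statements, one over each small strip $I_\tau \times \mathbb{T}$ produced in Lemma \ref{lem:levsets}, where the parametrizing functions are genuinely analytic and depend nicely on $\alpha$. Since $\mathbb{T} \setminus S_{\hat\epsilon}$ is compact and contains no $z_1$-coordinate of a singularity, and since $\alpha$ ranges over the compact set $\mathbb{T}$, I would first cover $(\mathbb{T} \setminus S_{\hat\epsilon}) \times \mathbb{T}$ by finitely many product neighborhoods $I_\tau \times J_\alpha$ and argue that it suffices to prove that $F$ extends continuously (hence uniformly continuously, by compactness) to the closure of each such piece. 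The point is that on a neighborhood of any $(\tau,\alpha_0)$ with $\tau$ not a singular $z_1$-coordinate, the slice $\phi_\tau$ is a finite Blaschke product of degree $n$ with all $n$ preimages of $\alpha_0$ simple and with nonvanishing derivative there (Lemma \ref{lem:degblaschke} and the argument in Step 1 of Lemma \ref{lem:levsets}); the implicit function theorem then gives the $g_j^{\alpha}(\zeta)$ as functions \emph{jointly analytic} in $(\zeta,\alpha)$ near $(\tau,\alpha_0)$, and $\frac{\partial \phi}{\partial z_2}(\zeta, g_j^\alpha(\zeta))$ is jointly analytic and nonvanishing there (it equals $\phi'_{\zeta}$ evaluated at a simple zero of a degree-$n$ Blaschke product on $\mathbb{T}$, hence is bounded away from $0$ locally). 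Consequently each summand $f(\zeta,g_j^\alpha(\zeta)) / |\frac{\partial \phi}{\partial z_2}(\zeta,g_j^\alpha(\zeta))|$ is continuous in $(\zeta,\alpha)$ on that piece.

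Next I would address the labeling ambiguity: the functions $g_1^\alpha, \dots, g_n^\alpha$ from Lemma \ref{lem:levsets} are only defined up to a relabeling, with a possible jump across the base point $\tau_0$, and a priori the global labeling need not be continuous in $\alpha$. However, the \emph{sum} $F(\zeta,\alpha) = \sum_{j=1}^n f(\zeta, g_j^\alpha(\zeta)) |\partial_{z_2}\phi(\zeta, g_j^\alpha(\zeta))|^{-1}$ is manifestly symmetric in the branches, so it is independent of the labeling. Thus $F(\zeta,\alpha)$ equals $\sum_{\eta: \phi(\zeta,\eta)=\alpha} f(\zeta,\eta)/|\partial_{z_2}\phi(\zeta,\eta)|$, a sum over the (generically $n$, always at most $n$) solutions $\eta \in \mathbb{T}$ of $\phi(\zeta,\eta)=\alpha$, which is intrinsically defined. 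On each local piece $I_\tau \times J_{\alpha_0}$ the $n$ solutions are given by the $n$ jointly analytic branches described above, so $F$ is continuous there; patching over the finite cover gives global continuity of $F$ on $(\mathbb{T}\setminus S_{\hat\epsilon})\times\mathbb{T}$, and compactness upgrades this to uniform continuity.

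The one delicate point — and the main obstacle — is that the number of \emph{distinct} solutions $\eta$ of $\phi(\zeta,\eta)=\alpha$ can drop below $n$ at finitely many exceptional $(\zeta,\alpha)$: for certain $\zeta$ the Blaschke product $\Phi_\zeta$ has degree strictly less than $n$ (by Lemma \ref{lem:degblaschke} this cannot happen when $\zeta$ avoids singular $z_1$-coordinates, but $\Phi_\zeta$ can still fail to take the value $\alpha$ at $n$ distinct simple points if $\alpha$ is a critical value of $\Phi_\zeta$). At such points two branches $g_j^\alpha, g_k^\alpha$ collide and $\partial_{z_2}\phi$ vanishes at the collision, so individual summands blow up. The resolution is that near such a collision the \emph{sum} of the colliding summands remains continuous: this is the standard fact that for a finite Blaschke product $B$ of fixed degree, the function $\lambda \mapsto \sum_{B(\eta)=\lambda} f(\eta)/|B'(\eta)|$, which is the total mass times a weighted average against the Clark measure of $B$, extends continuously in $\lambda$ even across critical values — indeed it equals $\int f \, d(\text{Clark measure of } B_\lambda)$ and the Clark measures of a finite Blaschke product depend weak-$*$ continuously on the parameter. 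I would invoke this one-variable fact (or reprove it by a residue/symmetric-function computation showing $\sum_{\eta:\,\Phi_\zeta(\eta)=\alpha} f(\zeta,\eta)/|\partial_{z_2}\phi(\zeta,\eta)|$ is a continuous symmetric function of the elementary symmetric functions of the roots, which vary continuously with $(\zeta,\alpha)$), applied with $B = \Phi_\zeta$ and with $f(\zeta,\cdot)$, keeping $\zeta$ as a jointly varying parameter; combined with continuity of $f$ and of the coefficients of $\Phi_\zeta$ in $\zeta$, this yields joint continuity of $F$ even across the branch collisions, completing the argument.
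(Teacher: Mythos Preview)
Your first two paragraphs are essentially the paper's proof and are correct: apply the implicit function theorem to $\Phi(\zeta,w,\alpha)=\phi(\zeta,w)-\alpha$ to get jointly continuous local branches $G_j(\zeta,\alpha)$, note that $\partial_{z_2}\phi$ is continuous and nonzero at the relevant points, observe that $F$ is symmetric in the branches so labeling issues disappear, and finish by compactness. The paper does exactly this, extracting a finite subcover of $(\mathbb{T}\setminus S_{\hat\epsilon})\times\mathbb{T}$ by products $U_1\times U_3$ and choosing a uniform $\delta$.

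Your third paragraph, however, is based on a false worry. A nonconstant finite Blaschke product $B$ has \emph{no} critical points on $\mathbb{T}$: its argument $\theta\mapsto\arg B(e^{i\theta})$ is strictly increasing (equivalently $|B'(e^{i\theta})|=\sum_k\frac{1-|a_k|^2}{|e^{i\theta}-a_k|^2}>0$), so every $\alpha\in\mathbb{T}$ has exactly $\deg B$ distinct simple preimages on $\mathbb{T}$. Since Lemma~\ref{lem:degblaschke} guarantees $\Phi_\zeta$ has degree exactly $n$ for every $\zeta\in\mathbb{T}\setminus S_{\hat\epsilon}$, branch collisions never occur and $\partial_{z_2}\phi(\zeta,\eta)\neq 0$ on all of $(\mathbb{T}\setminus S_{\hat\epsilon})\times\mathbb{T}$. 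The paper states this nonvanishing at the outset (``by standard properties of RIFs, $\frac{\partial\phi}{\partial z_2}$ is continuous and nonzero on $(\mathbb{T}\setminus S_{\hat\epsilon})\times\mathbb{T}$''). So the weak-$*$ continuity/symmetric-function argument you sketch, while not incorrect in itself, is unnecessary; your first two paragraphs already constitute a complete proof.
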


\begin{proof} First, note that by standard properties of RIFs, $\frac{\partial \phi}{\partial z_2}$ is continuous and nonzero on  $(\mathbb{T} \setminus S_{\hat{\epsilon}}) \times \mathbb{T}$. So it suffices to show that the set $\{ g_1^\alpha(\zeta), \dots, g_n^\alpha(\zeta)\}$ is continuous on  $(\mathbb{T} \setminus S_{\hat{\epsilon}}) \times \mathbb{T}$ up to a reordering of the functions. Specifically, we need to show that for each $\epsilon_0>0,$ there is a $\delta >0$ such that if $|\zeta-z| + | \alpha - \gamma| < \delta$ for $\zeta, z \in (\mathbb{T} \setminus S_{\hat{\epsilon}})$ and $\alpha, \gamma \in \mathbb{T}$, then after potentially reordering, we have 
\begin{equation} \label{eqn:epsilon}   \sum_{j=1}^n \left| g_j^{\alpha}(\zeta) - g_j^{\gamma}(z) \right| < \epsilon_0.\end{equation}
To that end, we will use the implicit function theorem. Define the function $\Phi(\zeta, w, \alpha) = \phi(\zeta,w)-\alpha$. By properties of finite Blaschke products, for each fixed $(\zeta_0, \alpha_0) \in (\mathbb{T} \setminus S_{\hat{\epsilon}})\times \mathbb{T}$, there exist distinct $w_1, \dots, w_n \in \mathbb{T}$ such that $\Phi(\zeta_0, w_j, \alpha_0)=0$. Then for $j=1, \dots, n$, the implicit function theorem implies that there are open arcs in $\mathbb{T}$, which we denote $U_1^j:=U_1^j(\zeta_0, \alpha_0), U_2^j:=U_2^j(\zeta_0, \alpha_0), U_3^j:=U_3^j(\zeta_0, \alpha_0)$ centered at $\zeta_0, w_j, \alpha_0$ respectively and a continuous function $G_j^{(\alpha_0, \zeta_0)}$ such that 
\[ \left \{ (\zeta, w, \alpha) \in U_1^j \times U_2^j \times U_3^j: \Phi(\zeta,w,\alpha)=0 \right\} =\left \{  (\zeta, G_j^{(\alpha_0, \zeta_0)}(\zeta, \alpha), \alpha): (\zeta, \alpha) \in U^j_1\times U^j_3\right\}. \]
By shrinking these arcs if necessary, we can assume that the $U_1^j, U_3^j$ do not depend on $j$, that the $U_2^1, \dots, U_2^n$ are pairwise-disjoint, and that the $G$ functions are uniformly continuous on $U_1 \times U_3$.  Then the family of sets
\[ \left \{ U_1(\zeta_0, \alpha_0) \times U_3(\zeta_0, \alpha_0) : (\zeta_0, \alpha_0)\in (\mathbb{T} \setminus S_{\hat{\epsilon}}) \times \mathbb{T} \right\} \]
forms an open cover of $(\mathbb{T} \setminus S_{\hat{\epsilon}}) \times \mathbb{T}.$ Since  $(\mathbb{T} \setminus S_{\hat{\epsilon}}) \times \mathbb{T}$ is compact, we can obtain a finite  subcover 
\[  \big \{ U_1(\zeta_\ell, \alpha_\ell) \times U_3(\zeta_\ell, \alpha_\ell)\big\}_{\ell=1}^L.\]
Now choose $\delta >0$ such that if $|\zeta-z| + | \alpha - \gamma| < \delta$, then the points $(\zeta, \alpha), (z,\gamma)$ must be in at least one common set in this finite subcover. Shrinking $\delta$ if necessarily, we can further assume that for all of the $G_j^{(\alpha_\ell, \zeta_\ell)}$, if $(\zeta, \alpha), (z,\gamma)$ are in $U_1(\zeta_\ell, \alpha_\ell) \times U_3(\zeta_\ell, \alpha_\ell)$, then 
\begin{equation} \label{eqn:epsilon2} |z-\zeta| + |\alpha - \gamma|  <\delta  \ \ \text{ implies that } \ \  | G_j^{(\alpha_\ell, \zeta_\ell)}(\zeta, \alpha) - G_j^{(\alpha_\ell, \zeta_\ell)}(z, \gamma) | <\tfrac{\epsilon_0}{n}.\end{equation}
 Furthermore, the disjointness of the $U^j_2(\alpha_\ell, \zeta_\ell)$ implies that (after reordering with respect to the $j$ index) we must have 
\[ g_j^\alpha (\zeta) = G_j^{(\alpha_\ell, \zeta_\ell)}(\zeta,\alpha)  \ \text{ and } g_j^{\gamma}(z) = G_j^{(\alpha_\ell, \zeta_\ell)}(z,\gamma),\] 
for $j=1,\dots, n$. Then the desired inequality \eqref{eqn:epsilon} follows immediately from \eqref{eqn:epsilon2}.
\end{proof}

Now we can prove the general formula for Clark measures associated to exceptional values of two-variable RIFs. 

\begin{theorem} \label{thm:exceptional} Let $\phi=\frac{\tilde{p}}{p}$ be a bidegree $(m,n)$ RIF, and let $\alpha \in \mathbb{T}$ be exceptional for $\phi$. Then for $f\in C(\mathbb{T}^2)$, the associated Clark measure $\sigma_{\alpha}$ satisfies
\[\int_{\mathbb{T}^2}f(\zeta)d\sigma_{\alpha}(\zeta)=\sum_{j=1}^n\int_{\mathbb{T}}f(\zeta,g^\alpha_j(\zeta))\frac{dm(\zeta)}{|\frac{\partial \phi}{\partial z_2}(\zeta, g^\alpha_j(\zeta))|} + \sum_{k=1}^\ell c_k^{\alpha} \int_\mathbb{T} f(\tau_k, \zeta) dm(\zeta),\]
where $g^\alpha_1,\ldots, g^\alpha_n$ are the parametrizing functions from Lemma \ref{lem:levsets}, $\zeta_1 = \tau_1, \dots, \zeta_1=\tau_\ell$ are the vertical lines in $\mathcal{C}_{\alpha}(\phi)$ from Lemma \ref{lem:levsets}, and the $c^\alpha_k=|\frac{\partial \phi}{\partial z_1}(\tau_k,z_2)|^{-1}>0$ are constants.
\end{theorem}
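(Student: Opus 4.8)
The plan is to reduce the exceptional case to the generic case via a limiting argument, using the uniform continuity of the "good part" of the Clark formula established in Lemma~\ref{lem:unif}. Concretely, fix $f \in C(\mathbb{T}^2)$ and let $\{\alpha_N\} \subset \mathbb{T}$ be a sequence of generic values converging to the exceptional value $\alpha$ (this is possible since, by Lemma~\ref{lem:levsets} and the finiteness of the singularity set, there are only finitely many exceptional values). By Theorem~\ref{thm:2clarkformula}, for each $N$ we have $\int_{\mathbb{T}^2} f\, d\sigma_{\alpha_N} = \int_{\mathbb{T}} F(\zeta, \alpha_N)\, dm(\zeta)$, where $F$ is as in \eqref{eqn:Fazeta}. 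On the other hand, the map $\beta \mapsto \sigma_\beta$ is weak-$*$ continuous on $\mathbb{T}$: indeed, the defining identity $\frac{1-|\phi(z)|^2}{|\beta - \phi(z)|^2} = \int P_z\, d\sigma_\beta$ shows that $\int P_z\, d\sigma_{\alpha_N} \to \int P_z\, d\sigma_\alpha$ for each fixed $z \in \mathbb{D}^2$ (here one uses that $\phi(z) \ne \beta$ for $z \in \mathbb{D}^2$, $\beta \in \mathbb{T}$, since $\phi$ maps into $\mathbb{D}$), and since all $\sigma_\beta$ are probability measures and finite linear combinations of Poisson kernels are dense in $C(\mathbb{T}^2)$, weak-$*$ convergence follows. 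Hence $\int_{\mathbb{T}^2} f\, d\sigma_\alpha = \lim_N \int_{\mathbb{T}} F(\zeta, \alpha_N)\, dm(\zeta)$.

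The next step is to analyze $\lim_N \int_{\mathbb{T}} F(\zeta, \alpha_N)\, dm(\zeta)$ by splitting $\mathbb{T}$ into the region away from the singular $z_1$-coordinates and small neighborhoods of them. Fix $\hat\epsilon > 0$ as in Lemma~\ref{lem:unif} and write $\int_{\mathbb{T}} = \int_{\mathbb{T} \setminus S_{\hat\epsilon}} + \int_{S_{\hat\epsilon}}$. On $\mathbb{T} \setminus S_{\hat\epsilon}$, Lemma~\ref{lem:unif} gives that $F(\cdot, \cdot)$ is uniformly continuous on the compact set $(\mathbb{T}\setminus S_{\hat\epsilon}) \times \mathbb{T}$, so $F(\zeta, \alpha_N) \to F(\zeta, \alpha)$ uniformly in $\zeta \in \mathbb{T}\setminus S_{\hat\epsilon}$; therefore $\int_{\mathbb{T}\setminus S_{\hat\epsilon}} F(\zeta, \alpha_N)\, dm(\zeta) \to \int_{\mathbb{T}\setminus S_{\hat\epsilon}} F(\zeta, \alpha)\, dm(\zeta)$. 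Letting $\hat\epsilon \to 0$, the limit of these integrals is $\sum_{j=1}^n \int_{\mathbb{T}} f(\zeta, g_j^\alpha(\zeta)) |\frac{\partial\phi}{\partial z_2}(\zeta, g_j^\alpha(\zeta))|^{-1}\, dm(\zeta)$; one must check this integral converges absolutely, which follows from the weight being integrable — a fact that can be extracted from the results of Section~\ref{sec:structtwo} or, more elementarily, by noting the left side (the total integral against $d\sigma_{\alpha_N}$ with $f \equiv 1$) stays bounded and applying Fatou. This identifies the first sum in the claimed formula as one piece of the limit; the remaining contribution, necessarily a positive measure since it is a weak-$*$ limit of positive measures minus the identified piece, must come entirely from the strips $S_{\hat\epsilon}$ and be supported on the vertical lines $\{\zeta_1 = \tau_k\}$ present in $\mathcal{C}_\alpha$.

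The heart of the matter — and the step I expect to be the main obstacle — is to show that the leftover mass concentrated near each singular coordinate $\tau_k$ (for those $\tau_k$ that are actually $z_1$-coordinates of a vertical line in $\mathcal{C}_\alpha$) equals exactly $c_k^\alpha \int_{\mathbb{T}} f(\tau_k, \zeta)\, dm(\zeta)$ with $c_k^\alpha = |\frac{\partial\phi}{\partial z_1}(\tau_k, z_2)|^{-1}$. For this I would work with a slice in the \emph{other} variable: fix $z_1 \in \mathbb{D}$ near (but not equal to) a point on $\mathbb{T}$ and consider $\psi_{z_1}(z_2) = \frac{1-|\phi(z_1,z_2)|^2}{|\alpha - \phi(z_1,z_2)|^2}$, or better, integrate the Poisson-kernel identity $\frac{1-|\phi(z)|^2}{|\alpha-\phi(z)|^2} = \int P_z\, d\sigma_\alpha$ against a test function in $z_1$ and let $z_1$ approach $\tau_k$ radially. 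By Lemma~\ref{lem:excvalueconst}, $\phi(\tau_k, z_2) \equiv \alpha$ and $\frac{\partial\phi}{\partial z_1}(\tau_k, z_2) \equiv c_1 \ne 0$, so near the line $\zeta_1 = \tau_k$ we have the local expansion $\alpha - \phi(z_1, z_2) \approx -c_1(z_1 - \tau_k)$, and the quotient $\frac{1-|\phi(z_1,z_2)|^2}{|\alpha-\phi(z_1,z_2)|^2}$ behaves, as $z_1 \to \tau_k$, like $|c_1|^{-1}$ times the Poisson kernel $P_{z_1}(\tau_k)$ — uniformly in $z_2$ on $\mathbb{T}$ — which in the limit picks out precisely the uniform measure $c_k^\alpha\, dm(\zeta_2)$ on the line $\{\zeta_1 = \tau_k\}$. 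Making this uniformity precise (controlling the error terms $O(|z_1-\tau_k|^2)$ in the expansion of $\phi$ uniformly over $z_2 \in \mathbb{T}$, and separating the contribution of the line from the contributions of the graph components $g_j^\alpha$ that may also pass through $(\tau_k, \cdot)$) is the technical crux; once it is done, comparing with the already-identified generic-limit piece forces the total to be exactly the stated formula, and testing against $f \equiv 1$ confirms $c_k^\alpha > 0$ and that no further exceptional lines in the $\zeta_2 = \tau$ direction contribute beyond what the symmetric version of the argument accounts for.
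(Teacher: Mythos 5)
Your proposal matches the paper's strategy essentially step for step: approximate the exceptional $\alpha$ by generic values $\alpha_N$, use weak-$*$ convergence of $\sigma_{\alpha_N} \to \sigma_\alpha$, split $\mathbb{T}$ into a region away from the singular $z_1$-coordinates (where Lemma \ref{lem:unif} gives uniform convergence of $F(\cdot,\alpha_N) \to F(\cdot,\alpha)$) and small neighborhoods of them, then account for the residual mass concentrated on the vertical lines by a radial limit $z_1 = r\tau_k \to \tau_k$ using Lemma \ref{lem:excvalueconst}. The paper makes the technical crux you identify precise by testing against Poisson kernels $P_{z_1}P_{z_2}$, multiplying the Poisson-integral identity by $(1-r)$, and invoking Carath\'eodory's theorem to evaluate $\lim_{r\to 1^-}(1-|\phi(r\tau_k,z_2)|)/(1-r) = |d_k^\alpha|$; this is exactly the ``$|c_1|^{-1}$ times $P_{z_1}(\tau_k)$'' asymptotic you describe, and the localization to $L_k$ follows from $\lim_{r\to 1^-}(1-r)P_{(r\tau_k,z_2)}(\zeta)$ vanishing off the line. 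One small caution: for absolute integrability of the weight $W_j^\alpha$, do not lean on Section \ref{sec:structtwo} (which comes later and is logically downstream); your alternative Fatou/monotone-convergence argument, which is also what the paper uses after splitting $f$ into positive and negative parts, is the right one.
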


\begin{proof} This proof uses many of the same arguments as the proof of Proposition 3.9 in \cite{BCSMich}, though it needs the additional tools of Lemma \ref{lem:excvalueconst}  and Lemma \ref{lem:unif}. For the ease of the reader, we still include the details below.

First, write $\mathcal{C}_\alpha(\phi)$ as $E_\alpha \cup (\cup_{k=1}^\ell L_k)$ where $E_\alpha$ is the set parametrized by the $g_j^\alpha$ and each $L_k$ denotes the vertical line $\{\zeta_1 = \tau_k\}$ in $\mathbb{T}^2$. As Clark measures do not have point-masses \cite[Theorem 2.1]{BCSMich}, $\sigma_\alpha(E_\alpha \cap L_k) =0$ for each $k$. Thus, it suffices for us to show
\begin{align}  \label{eqn:sigma1}
\int_{\mathbb{T}^2} f(\zeta) \chi_{E_\alpha}(\zeta) d\sigma_\alpha(\zeta) &=\sum_{j=1}^n\int_{\mathbb{T}}f(\zeta,g^\alpha_j(\zeta))\frac{dm(\zeta)}{|\frac{\partial \phi}{\partial z_2}(\zeta, g^\alpha_j(\zeta))|} \\
\label{eqn:sigma2}
\int_{\mathbb{T}^2} f(\zeta) \chi_{L_k}(\zeta) d\sigma_\alpha(\zeta) &= c_k^{\alpha} \int_\mathbb{T} f(\tau_k, \zeta) dm(\zeta) \end{align}
for all $f \in C(\mathbb{T}^2)$ and $k=1, \dots, \ell$, where we use $\chi_E$ to denote the characteristic function of a set $E$.   \\

\noindent \textbf{Part 1.} Let us first establish \eqref{eqn:sigma1}. Let $f \in C(\mathbb{T}^2)$. Fix $\epsilon >0$ sufficiently small and define $S_\epsilon$ and $S_{\epsilon/2}$ as in \eqref{eqn:Se} for $\hat{\epsilon} =\epsilon$ and $\hat{\epsilon } =\frac{\epsilon}{2}$ respectively. It is worth noting that the $\tau_1, \dots, \tau_\ell$ in the current proof  (the constant values for the lines $L_k$) form a subset of the $\tau_1, \dots, \tau_K$ from \eqref{eqn:Se} (the $z_1$-coordinates of the singularities of $\phi$ on $\mathbb{T}^2$). Thus, it makes sense to assume that the line-values appear at the beginning of the singularity-values list and then use $\tau_k$ to denote elements from either list.

Now, let $(\alpha_i) \subseteq \mathbb{T}$ be a sequence converging to $\alpha$ with  each $\alpha_i$ generic. By Corollary 2.2 in \cite{D19}, we know that the sequence $(\sigma_{\alpha_i})$ converges weak-$\star$ to $\sigma_\alpha$. To use that, let  $\Psi_\epsilon$ be a function in $C(\mathbb{T})$ satisfying
\[ \Psi_\epsilon \equiv 1 \text{ on } \mathbb{T} \setminus S_\epsilon, \ \ \Psi_\epsilon \equiv 0 \text{ on } S_{\epsilon/2}, \ \ 0 \le \Psi_\epsilon \le 1 \text{ on } S_\epsilon \setminus S_{\epsilon/2}.\]
By these assumptions and by Theorem \ref{thm:2clarkformula}, we have
\begin{align} 
\int_{\mathbb{T}^2} f(\zeta) \Psi_\epsilon(\zeta_1) d\sigma_\alpha(\zeta) &= \lim_{i \rightarrow \infty} 
\int_{\mathbb{T}^2} f(\zeta) \Psi_\epsilon(\zeta_1) d\sigma_{\alpha_i}(\zeta) \nonumber \\
& =\lim_{i \rightarrow \infty}  \sum_{j=1}^n\int_{\mathbb{T}}f(\zeta,g^{\alpha_i}_j(\zeta))  \Psi_\epsilon(\zeta) \frac{dm(\zeta)}{|\frac{\partial \phi}{\partial z_2}(\zeta, g^{\alpha_i}_j(\zeta))|} \nonumber  \\
& = \sum_{j=1}^n\int_{\mathbb{T}}f(\zeta,g^\alpha_j(\zeta))  \Psi_\epsilon(\zeta) \frac{dm(\zeta)}{|\frac{\partial \phi}{\partial z_2}(\zeta, g^\alpha_j(\zeta))|}, \label{eqn:sigma3}
\end{align}
where the last equality follows from Lemma \ref{lem:unif}, which implies that $F(\zeta, \alpha)$ as defined in \eqref{eqn:Fazeta} is uniformly continuous on $(\mathbb{T} \setminus S_{\epsilon/2}) \times \mathbb{T}$. Thus, $F( \zeta, \alpha) \Psi_\epsilon(\zeta) $ is uniformly continuous on $\mathbb{T}^2$. 
Since $\Psi_\epsilon(\zeta_1)\equiv 0$ on each line $L_k$, we can conclude that
\[ \begin{aligned}  \left| \int_{\mathbb{T}^2} f(\zeta) \chi_{E_\alpha}(\zeta) d \sigma_\alpha(\zeta) -  \int_{\mathbb{T}^2} f(\zeta) \Psi_\epsilon(\zeta_1) d \sigma_\alpha(\zeta) \right| &\le    \int_{\mathbb{T}^2} |f(\zeta)|(1-\Psi_{\epsilon}(\zeta_1)) \chi_{E_\alpha}(\zeta) d \sigma_\alpha(\zeta) \\
&\le  \| f \|_{L^{\infty}(\mathbb{T}^2)} \sigma_\alpha( (S_\epsilon \times \mathbb{T}) \cap E_\alpha).
\end{aligned}
\]
As $\epsilon \searrow 0$, the set $(S_\epsilon \times \mathbb{T}) \cap E_\alpha$ shrinks to a finite set of points and so, 
\[ \lim_{\epsilon \searrow 0}  \sigma_\alpha( (S_\epsilon \times \mathbb{T}) \cap E_\alpha) =0.\]
From this, we can conclude
\[   \int_{\mathbb{T}^2} f(\zeta) \chi_{E_\alpha}(\zeta) d \sigma_\alpha(\zeta) =\lim_{\epsilon \searrow 0}  \int_{\mathbb{T}^2} f(\zeta) \Psi_\epsilon(\zeta_1) d \sigma_\alpha(\zeta).\] 
Meanwhile, by breaking $f$ into its real and imaginary parts and then their positive and negative parts, we can use the monotone convergence theorem to conclude that 
\[ \lim_{\epsilon \searrow 0} \sum_{j=1}^n\int_{\mathbb{T}}f(\zeta,g^\alpha_j(\zeta))  \Psi_\epsilon(\zeta) \frac{dm(\zeta)}{|\frac{\partial \phi}{\partial z_2}(\zeta, g^\alpha_j(\zeta))|} = \sum_{j=1}^n\int_{\mathbb{T}}f(\zeta,g^\alpha_j(\zeta)) \frac{dm(\zeta)}{|\frac{\partial \phi}{\partial z_2}(\zeta, g^\alpha_j(\zeta))|}.\]
Combining these last two equalities with \eqref{eqn:sigma3} yields \eqref{eqn:sigma1}.\\

\noindent \textbf{Part 2.} To establish  \eqref{eqn:sigma2}, we follow the proof from \cite{BCSMich} and show that \eqref{eqn:sigma2} holds for all Poisson kernels $P_z$, where $z\in \mathbb{D}^2$. Then the result follows immediately, since linear combinations of these are dense in $C(\mathbb{T}^2)$.

First, fix 
 $r \in (0,1)$. The definition of $\sigma_\alpha$ implies that
\begin{equation} \label{eqn:sigma4}\int_{\mathbb{T}^2} P_{(r \tau_k,z_2)}(\zeta) d\sigma_\alpha(\zeta) =  \Re \left( \frac{\alpha + \phi(r \tau_k,z_2)}{\alpha-\phi(r \tau_k,z_2)}\right). \end{equation}
For the remainder of the proof, we basically just multiply both sides of \eqref{eqn:sigma4} by $(1-r)$ and take limits.
First, for $\zeta \in \mathbb{T}^2$, one can check that
\[ \lim_{r\rightarrow 1^{-}} (1-r) P_{(r\tau_k,z_2)}(\zeta) = \left\{\begin{array}{cc} 0 & \text{ if } \zeta_1 \ne \tau_k,\\
 2 P_{z_2}(\zeta_2) & \text{ if } \zeta_1 = \tau_k. \end{array} \right.\]
 Then the dominated convergence theorem implies that 
 \[ \lim_{r\rightarrow 1^-} \int_{\mathbb{T}^2} (1-r)  P_{(r \tau_k,z_2)}(\zeta) d\sigma_\alpha(\zeta) = \int_{\mathbb{T}^2} 2 P_{z_2} (\zeta_2) \chi_{L_k} (\zeta) d\sigma_{\alpha}(\zeta).\]
Since $L_k \subseteq \mathcal{C}_\alpha$, the maximum modulus principle implies that $\phi(\tau_k, z_2) = \alpha$ for all $z_2 \in \mathbb{D}$. Furthermore, since $\phi$ is analytic at each $(\tau_k,z_2)$ for $z_2 \in \mathbb{D}$, we have
 \[ \lim_{z_1 \rightarrow \tau_k} \phi(z_1, z_2) =\alpha \text{ and } \lim_{z_1 \rightarrow \tau_k} \frac{ \phi(z_1, z_2)-\alpha}{z_1 -\tau_k} = \tfrac{\partial \phi}{\partial z_1}(\tau_k , z_2) := d^\alpha_k \ne 0,\] 
 by Lemma \ref{lem:excvalueconst}. Then Carath\'eodory's theorem (for instance, consult (VI-3) in \cite{Sar94}) gives 
 \[  \lim_{r\rightarrow 1^-} \frac{ 1-|\phi(r \tau_k,z_2)|}{1-r} = d_k^{\alpha} \tau_k \bar{\alpha} = |d_k^{\alpha}|\] 
 and so
 \[
 \begin{aligned}
 \lim_{r\rightarrow 1^-} \Re\left( \frac{(1-r) (\alpha +\phi(r \tau_k, z_2))}{ \alpha-\phi(r \tau_k, z_2)} \right) &=
 \lim_{r\rightarrow 1^-}
(1-r) \frac{ 1-|\phi(r\tau_k,z_2)|^2}{|\alpha - \phi(r\tau_k,z_2)|^2}  \\
& = \lim_{r\rightarrow 1^-} 2 \left | \frac{\tau_k-r\tau_k}{\alpha - \phi(r \tau_k,z_2)}\right|^2  \frac{ 1-|\phi(r \tau_k,z_2)|}{1-r}  = \frac{2}{|d^\alpha_k|}.
\end{aligned}
\]
To finish the proof, set
\begin{equation} \label{eqn:formc} c^\alpha_k=\frac{1}{|d^\alpha_k|} = \frac{1}{| \tfrac{\partial \phi}{\partial z_1}(\tau_k , z_2)|} >0.\end{equation}
 Then \eqref{eqn:sigma4} paired with our prior computations imply that
 \[   \int_{\mathbb{T}^2} P_{z_2} (\zeta_2) \chi_{L_k} (\zeta) d\sigma_{\alpha}(\zeta) = c^\alpha_k =c^\alpha_k \int_{\mathbb{T}} P_{z_2}(\zeta) dm(\zeta).\]
If we multiply both sides by $P_{z_1}(\tau_k)$, this gives \eqref{eqn:sigma2} for $f=P_{z_1}P_{z_2}$, which is what we were trying to show.
\end{proof}

\section{Analysis of Clark embeddings}\label{sec:iso}

Let $\phi$ be a two-variable RIF and recall that $K_\phi$ denotes the two-variable model space associated to $\phi$. Then, as discussed earlier,  Doubtsov in \cite{D19} studied the canonical isometry $J_\alpha: K_\phi \mapsto L^2(\sigma_\alpha)$, which is initially defined on reproducing kernels by
\[
J_\alpha[K_w](\zeta) := (1- \alpha \overline{\phi(w)})C_w(\zeta), \quad \text{for } w \in \mathbb{D}^d, \zeta \in \mathbb{T}^d
\]
and then extended to all functions in $K_\phi$.  In this section, we characterize when $J_\alpha$ is unitary. As in the previous section, we must consider the cases of generic $\alpha$ and exceptional $\alpha$ separately. 

\subsection{Clark embeddings associated to generic values}
Our structure theorem for Clark measures associated with generic $\alpha\in \mathbb{T}$ allows us to show that the corresponding Clark embedding operators are surjective. We achieve this by  showing that, for $\alpha \in \mathbb{T}$ generic, the bidisk algebra $A(\mathbb{D}^2)$ is dense in $L^2(\sigma_{\alpha})$. Then we can appeal to Doubtsov's result \cite[Theorem 3.2]{D19}. Note that we are excluding the degenerate case when $\phi$ is a function of one variable only; in that case $J_{\alpha}$ fails to be unitary for all $\alpha\in \mathbb{T}$, cf. \cite{D19}. 

We first need the following auxiliary lemma.

\begin{lemma} \label{lem:trigpoly} Let $\phi=\frac{\tilde{p}}{p}$ be a bidegree $(m,n)$ RIF  and suppose
 $\alpha\in \mathbb{T}$ is a generic value for $\phi$. Then
there exist rational functions $R_1,R_2\in A(\mathbb{D}^2)$ such that 
$\bar{z}_1=R_1$ and $\bar{z}_2=R_2$
on $\mathcal{C}_{\alpha}(\phi)$.
\end{lemma}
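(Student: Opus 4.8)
The goal is to produce, on the unimodular level set $\mathcal{C}_\alpha(\phi)$, rational functions in the bidisk algebra that agree with $\bar z_1$ and $\bar z_2$. The natural starting point is the defining equation of the level set, $\tilde p(\zeta) = \alpha\, p(\zeta)$, which holds on $\mathcal{C}_\alpha(\phi)$ by the theorem of \cite{BPSmulti} quoted above. I would combine this with the classical reflection identity for the stable polynomial $p$ of polydegree $(m,n)$, namely $\tilde p(z) = z_1^m z_2^n \overline{p(1/\bar z_1, 1/\bar z_2)}$, which on $\mathbb{T}^2$ reads $\overline{p(\zeta)} = \zeta_1^{-m}\zeta_2^{-n}\tilde p(\zeta)$ and, symmetrically, $\overline{\tilde p(\zeta)} = \zeta_1^{-m}\zeta_2^{-n} p(\zeta)$. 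On $\mathcal{C}_\alpha(\phi)$ these let us rewrite the conjugate $\overline{\zeta_j}$-type quantities in terms of $p$, $\tilde p$ and powers of $\zeta_1,\zeta_2$, provided we can clear the denominator $p(\zeta)$, which is nonvanishing on $\mathbb{T}^2$ away from the (finitely many) singularities of $\phi$.

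Concretely, the plan is as follows. First, fix the one variable, say $z_2$: for each $\zeta_1\in\mathbb{T}$ that is not the $z_1$-coordinate of a singularity, the slice $\Phi_{\zeta_1}=\phi(\zeta_1,\cdot)$ is a degree-$n$ finite Blaschke product (Lemma \ref{lem:degblaschke}), and genericity guarantees it is nonconstant with $\Phi_{\zeta_1}\not\equiv\alpha$. The points of $\mathcal{C}_\alpha(\phi)$ over $\zeta_1$ are exactly the $n$ solutions of $\Phi_{\zeta_1}(\zeta_2)=\alpha$. Using the Blaschke structure one gets a one-variable relation expressing $\bar\zeta_2$ rationally in $\zeta_1,\zeta_2$ on that fiber; the task is to check this relation is uniform in $\zeta_1$ and extends to a single rational $R_2\in A(\mathbb{D}^2)$. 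I expect the cleanest route is purely algebraic: on $\mathcal{C}_\alpha(\phi)$ we have both $\tilde p - \alpha p = 0$ and the reflection identities, and eliminating $\overline{p(\zeta)}$ and $\overline{\tilde p(\zeta)}$ between them yields polynomial identities in $\zeta_1,\zeta_2,\bar\zeta_1,\bar\zeta_2$ that, after using $|\zeta_j|=1$ to replace $\bar\zeta_j = 1/\zeta_j$, can be solved for $\bar\zeta_1$ and $\bar\zeta_2$ as rational functions of $(\zeta_1,\zeta_2)$ with denominators that are factors of $p$ (hence nonvanishing on the closed bidisk after the common factors with the numerator on $\mathcal{C}_\alpha$ cancel). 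One then verifies these rational functions lie in $A(\mathbb{D}^2)$ and symmetrizes the argument in $z_1 \leftrightarrow z_2$ to obtain $R_1$.

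The main obstacle I anticipate is controlling the denominators: a priori the natural expressions for $\bar z_j$ have $p(z)$ (or $\tilde p(z)$) in the denominator, and $p$ does vanish at the boundary singularities of $\phi$, which lie on $\mathbb{T}^2$ and therefore on $\overline{\mathbb{D}^2}$, so such a quotient need not belong to $A(\mathbb{D}^2)$. Resolving this requires showing that on $\mathcal{C}_\alpha(\phi)$ the relevant numerator vanishes to sufficient order at each singularity to cancel the zero of $p$ there — i.e., that the quotient extends holomorphically across the singular points — or, alternatively, exploiting genericity to argue the singular points do not actually obstruct membership in the algebra. A secondary, more bookkeeping-level difficulty is that $\mathcal{C}_\alpha(\phi)$ may not be a graph over $\zeta_1$ globally (Lemma \ref{lem:levsets} allows a branch jump at one base point $\tau_0$), so one must be careful that the algebraic identity — which does not see that analytic subtlety — genuinely holds pointwise on all of $\mathcal{C}_\alpha(\phi)$; since the identity is polynomial and $\mathcal{C}_\alpha(\phi)$ is the zero set of $\tilde p - \alpha p$ on $\mathbb{T}^2$, this should follow by density of the smooth graph points, but it needs to be said. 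I would expect roughly: (i) set up the reflection identities on $\mathbb{T}^2$; (ii) restrict to $\mathcal{C}_\alpha$ and eliminate conjugated polynomials; (iii) solve for $\bar z_1,\bar z_2$ and identify the denominators; (iv) prove the resulting rational functions are in $A(\mathbb{D}^2)$, handling the singular points; (v) note the symmetry step giving $R_1$ from the $R_2$ argument.
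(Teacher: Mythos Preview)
Your proposal does not yet contain a proof: you never actually derive a formula for $R_1$ or $R_2$, and the route you sketch heads toward the wrong denominator. Using the reflection identity $\overline{p(\zeta)}=\bar\zeta_1^{\,m}\bar\zeta_2^{\,n}\tilde p(\zeta)$ together with $\tilde p=\alpha p$ on $\mathcal{C}_\alpha$ only yields $\bar\zeta_1^{\,m}\bar\zeta_2^{\,n}=\bar\alpha\,\overline{p(\zeta)}/p(\zeta)$, which neither isolates a single $\bar\zeta_j$ nor produces an element of $A(\mathbb{D}^2)$. Your anticipated obstruction---that the denominator is $p$ and vanishes at the boundary singularities of $\phi$---is a symptom of this: there is no reason to expect the numerator to cancel those zeros in a way that yields a function holomorphic on all of $\mathbb{D}^2$, and genericity says nothing about the singular set of $\phi$.

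The paper's argument avoids $p$ as a denominator altogether. Write $\tilde p(z)-\alpha p(z)$ as its constant term in $z_1$ plus $z_1$ times the remainder:
\[
\tilde p(z)-\alpha p(z)=\big(q_1(z_2)-\alpha p_1(z_2)\big)+z_1\big(q_2(z_1,z_2)-\alpha p_2(z_1,z_2)\big),
\]
where $q_1(z_2)=\tilde p(0,z_2)$ and $p_1(z_2)=p(0,z_2)$. On $\mathcal{C}_\alpha$ this vanishes, so $\bar\zeta_1=1/\zeta_1=\dfrac{\alpha p_2(\zeta)-q_2(\zeta)}{q_1(\zeta_2)-\alpha p_1(\zeta_2)}$, and the denominator is a \emph{one-variable} polynomial $q_1-\alpha p_1$. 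The point is that genericity is exactly what makes this denominator nonvanishing on $\overline{\mathbb{D}}$: any zero must lie on $\mathbb{T}$ (otherwise $\phi(0,\cdot)$ would be unimodular in $\mathbb{D}$), and a zero at $\tau\in\mathbb{T}$ would give $\phi(0,\tau)=q_1(\tau)/p_1(\tau)=\alpha$, forcing the Blaschke product $\phi(\cdot,\tau)$ to be identically $\alpha$ and placing the line $\mathbb{T}\times\{\tau\}$ inside $\mathcal{C}_\alpha$---contradicting genericity. The same splitting in the other variable gives $R_2$. Thus singularities of $\phi$ never enter; the correct denominator is $(\tilde p-\alpha p)(0,z_2)$, and its nonvanishing is equivalent to the generic hypothesis.
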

\begin{proof}
Since neither $\tilde{p}$ nor $p$ are polynomials in one variable only, we may write
\[\phi(z)=\frac{q_1(z_2)+z_1q_2(z_1,z_2)}{p_1(z_2)+z_1p_2(z_1,z_2)}\]
for some polynomials $p_1,q_1\in \mathbb{C}[z_2]$ and $p_2,q_2\in\mathbb{C}[z_1,z_2]$.  
Similarly, we have
\[\phi(z)=\frac{r_1(z_1)+z_2r_2(z_1,z_2)}{s_1(z_1)+z_2s_2(z_1,z_2)}\]
again with $r_1,s_1$ being polynomials in one variable, and $r_2,s_2$ being polynomials in $z_1,z_2$.
Using the first representation, we can rewrite the expression $\tilde{p}(z)-\alpha p(z)$ as
\begin{align}  \nonumber
\tilde{p}(z)-\alpha p(z) &=  q_1(z_2)+z_1q_2(z_1,z_2)-\alpha(p_1(z_2)+z_1p_2(z_1,z_2))\\
&=q_1(z_2)-\alpha p_1(z_2)+z_1(q_2(z_1,z_2)-\alpha p_2(z_1,z_2)). \label{levcurverewrite}
\end{align}
Suppose now that $q_1(z_2)-\alpha p_1(z_2)$ does not vanish on $\overline{\mathbb{D}}$. Then \eqref{levcurverewrite} implies that $\tilde{p}(z)-\alpha p(z)=0$ precisely when 
\[1=\frac{z_1(\alpha p_2(z_1,z_2)-q_2(z_1,z_2))}{q_1(z_2)-\alpha p_1(z_2)}\]
and if $\zeta \in \mathcal{C}_{\alpha}$, we can rewrite this as
\[\bar{\zeta_1}=\frac{\alpha p_2(\zeta_1,\zeta_2)-q_2(\zeta_1,\zeta_2)}{q_1(\zeta_2)-\alpha p_1(\zeta_2)}.\]
Since its denominator is non-vanishing in the closed unit disk, the rational function on the right belongs to the bidisk algebra $A(\mathbb{D}^2)$. Similarly, if $r_1(z_1)-\alpha s_1(z_2)\neq 0$ for $z_2\in \overline{\mathbb{D}}$, then $\bar{z}_2$ can be seen to be equal to a rational function in $A(\mathbb{D}^2)$ on $\mathcal{C}_{\alpha}$.
If both $q_1-\alpha p_1$ and $r_1-\alpha s_1$ are non-vanishing on the closed unit disk, the assertion of the lemma follows. 

It remains to prove that the assumption that $\alpha$ is generic rules out the presence of zeros in $\overline{\mathbb{D}}$. First note that any zero of $q_1(z_2)-\alpha p_1(z_2)$ in $\overline{\mathbb{D}}$ must in fact belong to $\mathbb{T}$; otherwise, $\phi$ would be unimodular at $(0,z_2)$, which is impossible since $\phi$ is a nonconstant RIF. Seeking a contradiction, we assume $q_1(\tau)=\alpha p_1(\tau)$ for some $\tau \in \mathbb{T}$. Consider the function
\[\phi_{\tau}(z_1)=\phi(z_1,\tau)=\frac{q_1(\tau)+z_1q_2(z_1,\tau)}{p_1(\tau)+z_1p_2(z_1,\tau)},\]
and note that $\phi_{\tau}$ is a finite Blaschke product. Evaluating at $z_1=0$, we obtain that 
\[\phi_{\tau}(0)=\frac{q_1(\tau)}{p_1(\tau)}=\alpha\frac{p_1(\tau)}{p_1(\tau)}=\alpha.\]
Hence $\phi_{\tau}(z_1)\equiv \alpha$ for $z_1\in \overline{\mathbb{D}}$. But this amounts to saying that $\mathbb{T}\times \{\tau\}\subset \mathcal{C}_{\alpha}$ which contradicts the assumption that $\alpha$ is generic. A similar argument applies to $r_1-\alpha s_1$, and the proof is complete.
\end{proof}

Now, we can show that the Clark embedding operators corresponding to generic values are surjective.

\begin{theorem} Let $\phi=\frac{\tilde{p}}{p}$ be a bidegree $(m,n)$ RIF
and let $\alpha \in \mathbb{T}$ be a generic value for $\phi$. Then the Clark embedding $J_{\alpha}\colon K_{\phi}\to L^2(\sigma_{\alpha})$ is unitary.
\end{theorem}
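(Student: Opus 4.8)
The plan is to invoke Doubtsov's criterion \cite[Theorem 3.2]{D19}, which says that $J_\alpha$ is unitary precisely when the bidisk algebra $A(\mathbb{D}^2)$ is dense in $L^2(\sigma_\alpha)$. So it suffices to prove that density. First I would use Lemma \ref{lem:trigpoly}: for generic $\alpha$ there are rational functions $R_1, R_2 \in A(\mathbb{D}^2)$ with $\bar\zeta_1 = R_1$ and $\bar\zeta_2 = R_2$ on $\mathcal{C}_\alpha(\phi)$. Since $\sigma_\alpha$ is supported on $\mathcal{C}_\alpha(\phi)$ by Lemma \ref{DoubClaim}, this means that $\bar\zeta_1$ and $\bar\zeta_2$ agree $\sigma_\alpha$-almost everywhere with functions in $A(\mathbb{D}^2)$.

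Next I would argue that the algebra generated by $z_1, z_2, R_1, R_2$ (a subalgebra of $A(\mathbb{D}^2)$) separates points of $\mathbb{T}^2$ — indeed $z_1,z_2$ alone already separate points — and contains the constants, so by Stone--Weierstrass its uniform closure, restricted to the compact set $\mathcal{C}_\alpha(\phi) \subseteq \mathbb{T}^2$, is all of $C(\mathcal{C}_\alpha(\phi))$; equivalently, polynomials in $z_1, z_2, \bar z_1, \bar z_2$ restricted to $\mathcal{C}_\alpha$ coincide (as functions on $\mathcal{C}_\alpha$) with elements of $A(\mathbb{D}^2)$ after substituting $\bar\zeta_j = R_j$, and these are uniformly dense in $C(\mathcal{C}_\alpha(\phi))$. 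Since $C(\mathbb{T}^2)$ restricts onto a dense subspace of $C(\mathcal{C}_\alpha(\phi))$ and $C(\mathcal{C}_\alpha(\phi))$ is dense in $L^2(\sigma_\alpha)$ (as $\sigma_\alpha$ is a finite positive Borel measure on the compact metric space $\mathcal{C}_\alpha(\phi)$), we conclude that $A(\mathbb{D}^2)$ is dense in $L^2(\sigma_\alpha)$.

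To spell out the key chain once more: given $f \in C(\mathbb{T}^2)$ and $\epsilon > 0$, approximate $f|_{\mathcal{C}_\alpha}$ uniformly on $\mathcal{C}_\alpha$ by a polynomial $P(\zeta_1,\zeta_2,\bar\zeta_1,\bar\zeta_2)$; on $\mathcal{C}_\alpha$ we may replace $\bar\zeta_j$ by $R_j(\zeta)$, obtaining a function $h \in A(\mathbb{D}^2)$ with $\|f - h\|_{L^\infty(\sigma_\alpha)} < \epsilon$, hence $\|f-h\|_{L^2(\sigma_\alpha)} < \epsilon \sqrt{\sigma_\alpha(\mathbb{T}^2)}$. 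Since $C(\mathbb{T}^2)$ is dense in $L^2(\sigma_\alpha)$, this gives density of $A(\mathbb{D}^2)$, and Doubtsov's theorem finishes the proof. I expect the only real subtlety to be bookkeeping around the substitution $\bar\zeta_j \mapsto R_j$ — one must be careful that this is an identity of functions \emph{on the support} $\mathcal{C}_\alpha$, not globally, so the approximation must be measured in $L^\infty(\sigma_\alpha)$ rather than in $C(\mathbb{T}^2)$; everything else is a routine application of Stone--Weierstrass and the definition of $\sigma_\alpha$.
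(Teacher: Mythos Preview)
Your proposal is correct and follows essentially the same approach as the paper: both reduce to Doubtsov's criterion, invoke Lemma~\ref{lem:trigpoly} to replace $\bar\zeta_j$ by $R_j\in A(\mathbb{D}^2)$ on the support $\mathcal{C}_\alpha(\phi)$, and use Stone--Weierstrass (the paper applies it on $\mathbb{T}^2$ to trigonometric polynomials, you on $\mathcal{C}_\alpha$ to the restricted algebra---which is self-adjoint there precisely because $\bar\zeta_j=R_j$) to conclude density of $A(\mathbb{D}^2)$ in $L^2(\sigma_\alpha)$. Your explicit chain in the final paragraph is exactly the argument the paper gives.
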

\begin{proof}  Using Theorem 3.2 in \cite{D19}, it will suffice to show that $A(\mathbb{D}^2)$ is dense in $L^2(\sigma_{\alpha})$. 
Since $\sigma_{\alpha}$ is a finite Borel measure on the compact set $\mathbb{T}^2$ (and hence, a finite Radon measure), $C(\mathbb{T}^2)$ is dense in  $L^2(\sigma_{\alpha})$. Moreover, by the Stone-Weierstrass theorem, the set of two-variable trigonometric polynomials is dense in $C(\mathbb{T}^2)$. From this, it is easy to show that the set of two-variable trigonometric polynomials is also dense in $L^2(\sigma_{\alpha})$. Specifically, fix $f \in L^2(\sigma_{\alpha})$, 
and $\epsilon >0$. Then there is a function $g \in C(\mathbb{T}^2)$ and a trigonometric polynomial $p$ such that 
\[ \|f - g\|_{L^2(\sigma_\alpha)} < \tfrac{\epsilon}{2} \ \ \text{ and } \ \ \max_{z \in \mathbb{T}^2} | g(z) - p(z) | < \frac{\epsilon}{2 \sigma_\alpha(\mathbb{T}^2)}.\]
Then 
\[ \|f - p\|_{L^2(\sigma_\alpha)} \le \|f - g\|_{L^2(\sigma_\alpha)}  + \|g - p \|_{L^2(\sigma_\alpha)} <\epsilon,\]
as needed. By Lemma \ref{lem:trigpoly}, we know that each trigonometric polynomial agrees with some function in $A(\mathbb{D}^2)$ on $\mathcal{C}_\alpha(\phi)$, which contains the support of $\sigma_\alpha$. Thus,  $A(\mathbb{D}^2)$ is dense in $L^2(\sigma_{\alpha})$.
\end{proof}

\subsection{Clark embeddings associated to exceptional values}

Let us now consider the case of exceptional values. In this situation, the Clark embedding operators are never surjective.

\begin{theorem}
Let $\phi=\frac{\tilde{p}}{p}$ be a bidegree $(m,n)$ RIF
and let $\alpha \in \mathbb{T}$ be an exceptional value for $\phi$. Then the Clark embedding $J_{\alpha}\colon K_{\phi}\to L^2(\sigma_{\alpha})$ is not unitary.
\end{theorem}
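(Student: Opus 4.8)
The plan is to use Doubtsov's criterion (Theorem 3.2 in \cite{D19}): the embedding $J_\alpha$ is unitary if and only if $A(\mathbb{D}^2)$ is dense in $L^2(\sigma_\alpha)$. So it suffices to produce a nonzero function in $L^2(\sigma_\alpha)$ that is orthogonal to $A(\mathbb{D}^2)$, i.e.\ to every polynomial in $z_1,z_2$. By Theorem \ref{thm:exceptional}, since $\alpha$ is exceptional, $\sigma_\alpha$ has at least one ``vertical line'' component: there is a $\tau\in\mathbb{T}$ with $L=\{\zeta\in\mathbb{T}^2:\zeta_1=\tau\}\subseteq\mathcal{C}_\alpha(\phi)$, and the restriction of $\sigma_\alpha$ to $L$ is $c^\alpha\, \delta_\tau(\zeta_1)\times dm(\zeta_2)$ for a positive constant $c^\alpha$ — in particular $\sigma_\alpha$ restricted to $L$ is (a multiple of) arclength measure on that circle.

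First I would define a candidate test function supported on the line $L$. Normalized Lebesgue measure $dm(\zeta_2)$ on the circle annihilates $\bar\zeta_2$ against $1$ — more precisely, $\int_{\mathbb T}\overline{\zeta_2}\,\zeta_2^k\,dm(\zeta_2)=0$ for all $k\ge 0$ except $k=0$. The natural choice is $h:=\bar\zeta_2\,\chi_L$ (or, to be safe about the interaction of the $\chi_{E_\alpha}$ part, $h:=(\bar\zeta_2-\langle \bar\zeta_2\rangle)\chi_L$ where the constant is chosen so $h\perp 1$; but $\bar\zeta_2$ already has zero mean on the circle). Then for any monomial $z_1^j z_2^k$ with $j,k\ge 0$,
\[
\int_{\mathbb{T}^2} z_1^j z_2^k\, \overline{h}\, d\sigma_\alpha
= \int_{\mathbb{T}^2} z_1^j z_2^{k}\,\zeta_2 \,\chi_L\, d\sigma_\alpha
= c^\alpha\,\tau^j \int_{\mathbb{T}} \zeta_2^{k+1}\, dm(\zeta_2) = 0,
\]
since $k+1\ge 1$. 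By linearity $h\perp P$ for every polynomial $P$, hence $h\perp A(\mathbb{D}^2)$ by density of polynomials in the bidisk algebra. Since $h$ is bounded and $\sigma_\alpha(L)=c^\alpha>0$, we have $h\in L^2(\sigma_\alpha)\setminus\{0\}$, so $A(\mathbb{D}^2)$ is not dense and $J_\alpha$ is not unitary.

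I would organize the write-up in three short steps: (i) invoke Theorem \ref{thm:exceptional} to extract the line $L$ and identify $\sigma_\alpha|_L$ as a positive multiple of arclength on $L$; (ii) exhibit $h=\bar\zeta_2\chi_L\in L^2(\sigma_\alpha)$, nonzero, and verify $\int z_1^jz_2^k\overline h\,d\sigma_\alpha=0$ for all $j,k\ge0$ using the product structure $\delta_\tau\times dm$; (iii) conclude by density of polynomials in $A(\mathbb{D}^2)$ and Doubtsov's theorem. I do not anticipate a serious obstacle: the only mild subtlety is making sure the chosen $h$ is genuinely orthogonal to \emph{all} of $A(\mathbb{D}^2)$ and not just to holomorphic monomials, but since $A(\mathbb{D}^2)$ is the uniform closure of holomorphic polynomials on $\overline{\mathbb{D}}^2$ this is immediate. (If one wanted an even cleaner statement, the same argument shows the orthogonal complement of $A(\mathbb{D}^2)$ in $L^2(\sigma_\alpha)$ is infinite-dimensional, as $\bar\zeta_2^n\chi_L$ works for every $n\ge1$.) One should also note, as the excerpt already does, that the degenerate case where $\phi$ depends on only one variable is excluded; when $\phi=\tilde p/p$ genuinely depends on both variables, $n>0$ guarantees the structural formula of Theorem \ref{thm:exceptional} applies with a nontrivial line component whenever $\alpha$ is exceptional.
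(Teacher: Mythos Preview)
Your proof is correct and follows essentially the same approach as the paper: both invoke Doubtsov's criterion and Theorem \ref{thm:exceptional} to extract a line component carrying a positive multiple of Lebesgue measure, then use that $\bar{\zeta}_2$ is orthogonal to $H^2(\mathbb{T})$ to show $A(\mathbb{D}^2)$ is not dense in $L^2(\sigma_\alpha)$. The paper uses the global function $g(\zeta)=\bar{\zeta}_2$ with a distance argument rather than your $\bar{\zeta}_2\chi_L$ orthogonality computation, and explicitly notes (via a ``without loss of generality'') that the exceptional line could be horizontal rather than vertical---you should add that symmetry remark too---but these are minor presentational differences.
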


\begin{proof}
The proof proceeds along the same lines as the second half of the proof of \cite[Proposition 3.10]{BCSMich}.  Namely, by Theorem \ref{thm:exceptional}, for $f \in C(\mathbb{T}^2)$, among the nonnegative terms that make up 
\[ \int_{\mathbb{T}^2}|f (\zeta)|^2d\sigma_{\alpha}(\zeta)\] 
is a term of one of the following two forms:
\[ c_1\int_{\mathbb{T}}|f(\tau, \zeta)|^2dm(\zeta) \text{ or }
c_1\int_{\mathbb{T}}|f(\zeta, \tau)|^2dm(\zeta),\]
for some $c_1 \ne 0$.
 Without loss of generality, we assume the former. By Theorem 3.2 in \cite{D19},  $J_{\alpha}$ is unitary if and only if $A(\mathbb{D}^2)$ is dense in $L^2(\sigma_{\alpha})$. Now for any choice of $f\in A(\mathbb{D}^2)$, the function $f(\tau, \zeta_2)$ belongs to $H^2(\mathbb{T})$. But $H^2(\mathbb{T})$ has positive distance from the $L^2(\mathbb{T})$-span of the function $g(\zeta)=\bar{\zeta}_2$, which is an element of $L^2(\sigma_{\alpha})$ since $g$ is continuous and $\sigma_{\alpha}$ is Radon. 
Hence $A(\mathbb{D}^2)$  fails to be dense in $L^2(\sigma_{\alpha})$, and the assertion follows.
\end{proof}

\section{Fine structure of Clark measures for two-variable RIFs}\label{sec:structtwo}
In this section, we continue our study of Clark measures associated with RIFs in the bidisk and our main goal is to use results from \cite{BKPS} to address a question raised in \cite[Remark 5.5]{BCSMich}. Specifically, for a bidegree $(n,1)$ RIF $\phi,$ it was observed that the Clark measures associated to generic $\alpha$ values exhibited a certain type of vanishing at the singularities of $\phi$ and that this vanishing appeared connected to the notion of contact order from \cite{BPS17}. In what follows, we will make these ideas precise and prove an order of vanishing result in the more general bidegree $(m,n)$ context of this paper. 

\subsection{Preliminaries on fine structure.}
First, let   $\phi=\frac{\tilde{p}}{p}$ be a bidegree $(m,n)$ RIF, and let $\alpha \in \mathbb{T}$ be generic. With the notation from Theorem \ref{thm:2clarkformula} and for $j=1, \dots, n$, let $W^\alpha_j$ denote the  the weight function
\begin{equation}
W^{\alpha}_j(\zeta)=\left|\frac{\partial \phi}{\partial z_2}\left(\zeta, g^{\alpha}_j(\zeta)\right)\right|^{-1}
\label{eq:Wadef}
\end{equation}
that appears in the Clark measure formula. In what follows, it will be useful to have the additional information about $W^\alpha_j$ encoded in the following lemma.

\begin{lemma}\label{lem:Wreduce} Let $\phi=\frac{\tilde{p}}{p}$ be a bidegree $(m,n)$ RIF
and  let $\alpha\in \mathbb{T}$ is generic. Then for each $j$, the function $W^{\alpha}_j \in L^1(\mathbb{T})$ and  satisfies the formula
\begin{equation}
W^{\alpha}_j(\zeta)=\frac{|p(\zeta, g^{\alpha}_j(\zeta)|}{\left|\frac{\partial \tilde{p}}{\partial z_2}(\zeta, g^{\alpha}_j(\zeta))-\alpha \frac{\partial p}{\partial z_2}(\zeta, g^{\alpha}_j(\zeta))\right|}.
\label{eq:Wareduct}
\end{equation}
\end{lemma}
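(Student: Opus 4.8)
The plan is to compute $\partial \phi/\partial z_2$ along the graph of $g^\alpha_j$ by the quotient rule, simplify using the identity $\tilde{p} = \alpha p$ valid on the level set, and then extract integrability from the $f \equiv 1$ case of Theorem~\ref{thm:2clarkformula}.

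First I would record that, by the theorem of \cite{BPSmulti} quoted above, $\mathcal{C}_\alpha(\phi) = \mathcal{L}_\alpha(\phi)$, so that
\[ \tilde{p}(\zeta, g^\alpha_j(\zeta)) - \alpha\, p(\zeta, g^\alpha_j(\zeta)) = 0 \qquad \text{for every } \zeta \in \mathbb{T},\ j = 1,\dots,n. \]
Next I would isolate the good set of parameters. Since $\phi$ has only finitely many singularities on $\mathbb{T}^2$ and a singularity $(\tau, \gamma)$ can lie on the graph $\{(\zeta, g^\alpha_j(\zeta))\}$ only at the single point $\zeta = \tau$, the point $(\zeta, g^\alpha_j(\zeta))$ fails to be a point of analyticity of $\phi$ for at most finitely many $\zeta \in \mathbb{T}$. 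For all other $\zeta$ we have $p(\zeta, g^\alpha_j(\zeta)) \neq 0$, so the quotient rule gives
\[ \frac{\partial \phi}{\partial z_2} = \frac{\frac{\partial \tilde{p}}{\partial z_2}\, p - \tilde{p}\, \frac{\partial p}{\partial z_2}}{p^2}; \]
substituting $\tilde{p}(\zeta, g^\alpha_j(\zeta)) = \alpha\, p(\zeta, g^\alpha_j(\zeta))$ into the numerator and cancelling one factor of $p$ yields
\[ \frac{\partial \phi}{\partial z_2}(\zeta, g^\alpha_j(\zeta)) = \frac{\frac{\partial \tilde{p}}{\partial z_2}(\zeta, g^\alpha_j(\zeta)) - \alpha\, \frac{\partial p}{\partial z_2}(\zeta, g^\alpha_j(\zeta))}{p(\zeta, g^\alpha_j(\zeta))}. \]
Taking moduli and reciprocals gives \eqref{eq:Wareduct} for all but finitely many $\zeta \in \mathbb{T}$, hence as an identity of measurable functions on $\mathbb{T}$ up to a null set. (Both sides are well defined a.e.: $\partial \phi/\partial z_2(\zeta, \cdot)$ is the derivative of a nonconstant finite Blaschke product, which never vanishes on $\mathbb{T}$, so the denominator on the right of \eqref{eq:Wareduct} is nonzero wherever $p(\zeta, g^\alpha_j(\zeta)) \neq 0$.)

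For the $L^1$ assertion I would apply Theorem~\ref{thm:2clarkformula} with $f \equiv 1$, obtaining
\[ \sigma_\alpha(\mathbb{T}^2) = \sum_{j=1}^n \int_{\mathbb{T}} W^\alpha_j(\zeta)\, dm(\zeta). \]
Evaluating the defining identity for $\sigma_\alpha$ at the origin shows $\sigma_\alpha(\mathbb{T}^2) = \tfrac{1 - |\phi(0)|^2}{|\alpha - \phi(0)|^2} < \infty$, and since every summand on the right is nonnegative, each $\int_{\mathbb{T}} W^\alpha_j\, dm$ is finite, i.e. $W^\alpha_j \in L^1(\mathbb{T})$. The computation is otherwise routine; the only point requiring care is verifying that the set of $\zeta$ at which $(\zeta, g^\alpha_j(\zeta))$ is a singularity of $\phi$ is finite (and therefore null), so that the pointwise identity legitimately upgrades to an almost-everywhere one — but this is immediate from finiteness of the singular set together with the fact that each singularity occupies one vertical line.
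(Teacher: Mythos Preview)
Your argument is correct and follows essentially the same route as the paper's proof: quotient rule, substitution of $\tilde p=\alpha p$ on the level set, cancellation of one factor of $p$, then extraction of the $L^1$ bound from Theorem~\ref{thm:2clarkformula} with $f\equiv 1$ via $\sigma_\alpha(\mathbb{T}^2)=\tfrac{1-|\phi(0)|^2}{|\alpha-\phi(0)|^2}$. The only differences are cosmetic: you reverse the order of the two parts and add an explicit remark that the formula holds off a finite set, which the paper leaves implicit.
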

\begin{proof} Fix a generic $\alpha \in \mathbb{T}$. Observe that since
\[ \sum_{j=1}^n\int_{\mathbb{T}} W^{\alpha}_j(\zeta) dm(\zeta) = \int_{\mathbb{T}^2} d\sigma_\alpha(\zeta) = \frac{1-|\phi(0)|^2}{|\alpha-\phi(0)|^2}<\infty, \]
 each $W^{\alpha}_j$ must be in $L^1(\mathbb{T}).$ 
Next, note that
\[\frac{\partial \phi}{\partial z_2}(z_1,z_2)= \frac{\partial}{\partial z_2}\left(\frac{\tilde{p}}{p}\right)(z_1,z_2)=\frac{\frac{\partial \tilde{p}}{\partial z_2}(z_1,z_2)\cdot p(z_1, z_2)-\frac{\partial p}{\partial z_2}(z_1,z_2)\cdot \tilde{p}(z_1,z_2)}{p(z_1,z_2)^2}.\]
Now, by the definition of $\mathcal{C}_{\alpha}(\phi)$, we have $\tilde{p}(\zeta, g^{\alpha}_j(\zeta))=\alpha p(\zeta, g^{\alpha}_j(\zeta))$, meaning we can cancel a common factor of 
$p(\zeta, g^{\alpha}_j(\zeta))$ to obtain
\[\frac{\partial \phi}{\partial z_2}(\zeta, g^{\alpha}_j(\zeta))=\frac{\frac{\partial \tilde{p}}{\partial z_2}(\zeta, g^{\alpha}_j(\zeta))-\alpha \frac{\partial p}{\partial z_2}(\zeta, g^{\alpha}_j(\zeta))}{p(\zeta, g^{\alpha}_j(\zeta))}.\]
Taking reciprocals and moduli, we obtain the desired formula.
\end{proof}

Now let $(\tau, \gamma)\in \mathbb{T}^2$ be a singularity of $\phi$. This implies $\alpha p(\tau, \gamma)=0=\tilde{p}(\tau, \gamma)$ and so, $(\tau, \gamma) \in \mathcal{C}_{\alpha}(\phi)$ for each $\alpha$. Then, Lemma \ref{lem:Wreduce} suggests that each $W^{\alpha}_j(\zeta)$ should probably vanish at $\zeta=\tau$, provided $(\tau, g^{\alpha}_j(\tau))=(\tau, \gamma)$. Note, however, that  not every component of $\mathcal{C}_{\alpha}$ need go through each singularity, i.e. satisfy $g^{\alpha}_j(\tau) =\gamma$. This is exhibited in Figure \ref{fig:avoidlc},  which is reproduced from \cite[Example 7.2]{BPSprep} and displays some level curve components that do not go through the singularity at $(1,1)$. 
Thus 
$|p(\zeta, g^{\alpha}_j(\zeta))|$ may be strictly positive for some indices $j$.  

\begin{figure}
\includegraphics[width=0.4 \textwidth]{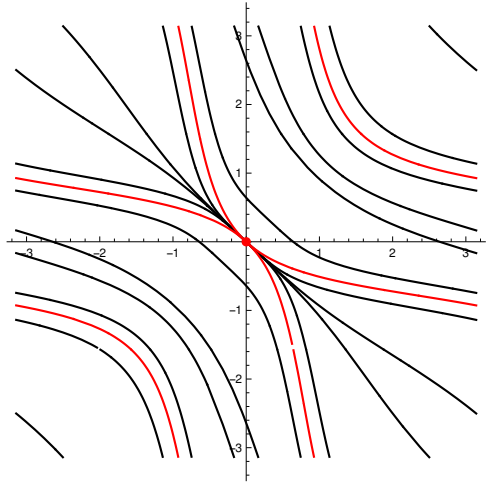}
\caption{Level curves for a RIF $\phi$ with a level curve component $(\zeta, g^{\alpha}_j(\zeta))$ that does not pass through the singularity at $(1,1)$, graphed via arguments on $[-\pi, \pi)^2.$} \label{fig:avoidlc}
\end{figure}

The goal for the remainder of this section is to prove the following statement:

\begin{center}
  \emph{For most values of $\alpha$, if a branch $(\zeta,  g^{\alpha}_j(\zeta))$ of $\mathcal{C}_{\alpha}$ goes through the singularity $(\tau, \gamma)$, then the corresponding weight function $W^{\alpha}_j$ has order of vanishing at $\tau$ that corresponds to the ``contact order'' of the corresponding branch of $\mathcal{Z}(\tilde{p})$ at the point $(\tau, \gamma)$.} 
\end{center}

\subsection{Change of variables and key definitions}

To make notions of contact order precise and connect them to level set behaviors, we switch to the setting of the upper half-plane and use the machinery developed in \cite{BKPS}. This will flatten the distinguished boundary $\mathbb{T}^2$ to $\mathbb{R}^2$ and allow us to move the singularity to the origin $(0,0)$. While we present the salient details of the change of variables here, the interested reader can find additional details and results in Section $2$ of  \cite{BKPS}. 

First, without loss of generality, assume that the singularity of interest is $(\tau, \gamma)=(1,1)$. Let $\mathbb{H}$ denote the upper half-plane $\mathbb{H}=\{z\in \mathbb{C}\colon \mathrm{Im}(z)>0\}$ and let $\beta \colon \mathbb{D}\to \mathbb{H}$ and $\beta^{-1}\colon \mathbb{H}\to \mathbb{D}$ be the conformal maps
\[\beta(z)=i\left(\frac{1-z}{1+z}\right) \quad \textrm{and} \quad \beta^{-1}(z)=\frac{1+iz}{1-iz}.\]
Recall that $\phi=\frac{\tilde{p}}{p}$ is a bidegree $(m,n)$ RIF. To convert $\phi$ to $\mathbb{H}$, first define a new polynomial $q$ by
\[ q(z) = (1-iz_1)^{m} (1-iz_2)^{n} p\left(\beta^{-1}(z_1), \beta^{-1}(z_2)\right).\]
Then $q$ is a polynomial with no zeros on $\mathbb{H}^2$ but a zero at $(0,0)$. Similarly, define
\[ \bar{q}(z) := (1-iz_1)^{m} (1-iz_2)^{n} \tilde{p}\left(\beta^{-1}(z_1), \beta^{-1}(z_2)\right).\]
One can check that 
\[ \bar{q}(z)=  \overline{q(\bar{z}_1, \bar{z}_2)}\]
and the rational function
 \[\Psi(z):=\frac{\bar{q}(z)}{q(z)}\] 
satisfies $|\Psi(z_1,z_2)| \le 1$ on $\mathbb{H}^2$ and $ |\Psi(z_1,z_2)| =1$ a.e.~on $\mathbb{R}^2$ with a singularity at $(0,0)$. Thus, we have transformed $\phi$ on $\mathbb{D}^2$ with a singularity at $(1,1)$ to $\Psi$ on $\mathbb{H}^2$ with a singularity at $(0,0)$.

Now we need notation to identify level sets in this context. Specifically, for $\alpha \in \mathbb{T}$, define the set
\[ \mathcal{V}_{\alpha}(\Psi) =\left\{x \in \mathbb{R}^2: \bar{q}(x)-\alpha q(x)=0 \right\}.\]
Let  $\zeta_2 = g^{\alpha}_j(\zeta_1)$ be a branch of $\mathcal{C}_{\alpha}(\phi)$. Then if we define $h^{\alpha}_j  := \beta \circ g^{\alpha}_j \circ \beta^{-1}$, the curve $x_2 = h^{\alpha}_j(x_1)$ must be  a branch of $\mathcal{V}_{\alpha}(\Psi)$ and this fact is clearly reversible. This is useful because Theorems $2.16$ and $2.20$ in \cite{BKPS} give information connecting the branches of the zero set $\mathcal{Z}(q)$ and the branches of the level set $ \mathcal{V}_{\alpha}(\Psi)$ via their respective Puiseux expansions. (See \cite[Chapter 7]{FisBook} for a detailed overview of how to parametrize branches of an algebraic curve using Puiseux series, and \cite[Section 2]{BKPS} for a fuller discussion of the specific forms these take in the present setting.)

We encode the important information from \cite{BKPS} in the following two theorems:

\begin{theorem}  \label{thm:qfactor}There is a positive integer $J$ and related positive integers $M_1, \dots, M_J$ such that near $(0,0)$, $q$ factors as
\begin{equation} \label{eqn:qfactor} q(z)=u(z)\prod_{j=1}^J\prod_{m=1}^{M_j}\left(z_2+q_j(z_1)+z_1^{2L_j}\psi_j(\mu^{m}_jz_1^{\frac{1}{M_j}})\right),\end{equation}
where $u$ is a unit at $(0,0)$ ( i.e. $u$ is analytic with $u(0,0)\neq 0$), each $q_j\in \mathbb{R}[z]$ (i.e.~ is a polynomial with real coefficients) has $\deg q_j <2L_j$, $q_j(0)=0$, and $q'_j(0)>0$, each $\mu_j = \text{exp}(2i\pi/M_j)$ is a primitive root of unity, 
 and each $\psi_j$ is analytic near the origin with $\mathrm{Im}(\psi_j(0))>0$. 
\end{theorem}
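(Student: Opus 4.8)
The plan is to derive \eqref{eqn:qfactor} by combining the Weierstrass preparation theorem, Puiseux's theorem on local parametrization of plane algebraic curves, and the refined branch descriptions for $\mathbb{H}^2$-transforms of stable atoral polynomials established in \cite{BKPS} (specifically Theorems $2.16$ and $2.20$ there). First I would apply Weierstrass preparation at $(0,0)$: since $p$ is atoral, $p$ and $\tilde p$ have no common factor, and tracking this through the change of variables shows that $z_2 \mapsto q(0,z_2)$ is not identically zero; hence near the origin $q(z) = u(z) W(z_1,z_2)$, where $u$ is a unit and $W$ is a Weierstrass polynomial in $z_2$ of degree $N$ equal to the order of vanishing of $q(0,\cdot)$ at $0$. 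Over the field of Puiseux series in $z_1$, $W$ splits into $N$ linear factors; collecting the Galois conjugates $\{\mu_j^m z_1^{1/M_j}\}_{m=1}^{M_j}$ of each $\mathbb{C}\{z_1\}[z_2]$-irreducible factor produces exactly the double product in \eqref{eqn:qfactor}, with $J$ the number of branches of $\mathcal{Z}(q)$ through the origin, $M_j$ the ramification index of the $j$-th branch, and $\sum_j M_j = N$.

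The substantive content -- the part genuinely imported from \cite{BKPS} -- is the precise shape of each branch: that its low-order part is an honest real polynomial $q_j$ with $\deg q_j < 2L_j$, $q_j(0)=0$, and $q_j'(0)>0$, while after factoring out $z_1^{2L_j}$ the remaining tail is given by an analytic $\psi_j$ with $\mathrm{Im}\,\psi_j(0)>0$. Informally, $q_j(0)=0$ records that the branch passes through the singularity; $q_j'(0)>0$ is the upper-half-plane incarnation of the normalization that the homogeneous expansion of $p$ at $(\tau,\gamma)$ has the form $c\prod(x_2 - a_j x_1)$ with $a_j>0$ (recalled in Section \ref{sec:prel}), which forces each branch's tangent line to have positive slope; and $\mathrm{Im}\,\psi_j(0)>0$ is exactly the statement that the branch avoids $\mathbb{H}^2$, i.e.\ it is the local encoding of the stability of $q$. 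The integer $2L_j$ is the contact order of the $j$-th branch of $\mathcal{Z}(q)$ with $\mathbb{R}^2$, which is even by \cite{BPSprep}, and the bound $\deg q_j < 2L_j$ is what makes the splitting of the branch into ``polynomial part'' plus ``$z_1^{2L_j}$ times analytic tail'' canonical.

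The main obstacle is precisely this last point: showing that the sub-$2L_j$-order part of each Puiseux series is a genuine \emph{real polynomial} and pinning down the two sign conditions. This is the delicate analysis of \cite[Section $2$]{BKPS}, which exploits the reflection identity $\bar q(z)=\overline{q(\bar z_1,\bar z_2)}$ together with the fact that $q$ arises as the $\mathbb{H}^2$-transform of a stable, atoral, bidegree $(m,n)$ polynomial, and a study of which sectors near $0$ a branch of $\mathcal{Z}(q)$ can occupy. Once those results are invoked, the remaining steps -- that $u$ may be taken analytic and nonvanishing, that the factor count equals $N$, and that the degree and vanishing bookkeeping is consistent -- are routine, and the theorem follows.
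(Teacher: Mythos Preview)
Your proposal is correct and matches the paper's treatment: the paper does not prove this theorem at all but simply imports it from \cite{BKPS} (Theorems~2.16 and~2.20), exactly as you do for the substantive content. Your additional sketch of the Weierstrass preparation and Puiseux framework, together with the identification of what the conditions $q_j'(0)>0$ and $\mathrm{Im}\,\psi_j(0)>0$ encode, is accurate and goes somewhat beyond what the paper itself spells out, but it is fully consistent with the argument in \cite{BKPS} that both you and the paper are invoking.
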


\begin{theorem} \label{thm:Valpha} For all but finitely many $\alpha \in \mathbb{T}$, we can also factor
\begin{equation}
\bar{q}(z)-\alpha q(z)=(1-\alpha) u^{\alpha}(z)\prod_{j=1}^J\prod_{m=1}^{M_j}\left(z_2+q_j(z_1)+z_1^{2L_j}\psi^{\alpha}_{j,m}(z_1)\right),
\label{eq:BKPSformula}
\end{equation}
where $u^{\alpha}$ is a unit at $(0,0)$, each $q_j$ is the same as in \eqref{eqn:qfactor} and each $\psi^{\alpha}_{j,m}$ is a real analytic function in a neighborhood of the origin. \end{theorem}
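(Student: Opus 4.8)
The plan is to transport the level-set factorization established in \cite{BKPS} back through the factorization \eqref{eqn:qfactor} of $q$, while keeping track of which finitely many $\alpha$ must be discarded. The needed dictionary is already in place above: $\bar q(z)=\overline{q(\bar z_1,\bar z_2)}$, so the germ of $\mathcal{Z}(\bar q)$ at $(0,0)$ is the complex conjugate of the germ of $\mathcal{Z}(q)$, and the restriction of the algebraic curve $\bar q-\alpha q=0$ to $\mathbb{R}^2$ is exactly $\mathcal{V}_\alpha(\Psi)$.

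First I would pin down the order of vanishing of $\bar q-\alpha q$ at the origin directly from \eqref{eqn:qfactor}. Since each $q_j$ has real coefficients with $q_j(0)=0$, $q_j'(0)>0$, and $\deg q_j<2L_j$, every factor appearing in \eqref{eqn:qfactor} has leading term $z_2+q_j'(0)z_1$; hence $q$, and by conjugation $\bar q$, vanishes to order exactly $N:=\sum_{j=1}^J M_j$ at $(0,0)$, with lowest homogeneous part a nonzero constant multiple of $\prod_{j=1}^J(z_2+q_j'(0)z_1)^{M_j}$. Consequently $\bar q-\alpha q$ also vanishes to order exactly $N$ at $(0,0)$ for every $\alpha\in\mathbb{T}$ outside the single value $\alpha_\ast$ at which the two leading constants cancel. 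Discarding $\alpha_\ast$, the value $\alpha=1$, the value $\alpha_0$ from Lemma \ref{lem:CO}, and any $\alpha$ for which a coordinate line through $(1,1)$ lies in $\mathcal{C}_\alpha(\phi)$ removes only finitely many points of $\mathbb{T}$, and I will call the remaining $\alpha$ admissible.

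For admissible $\alpha$ I would then put $\bar q-\alpha q$ in Weierstrass form in the variable $z_2$. From \eqref{eqn:qfactor} one has $q(0,z_2)=u(0,z_2)z_2^N$ and $\bar q(0,z_2)=\overline{u(0,\bar z_2)}\,z_2^N$, so $(\bar q-\alpha q)(0,z_2)$ vanishes to order exactly $N$ in $z_2$, and the Weierstrass preparation theorem gives $\bar q(z)-\alpha q(z)=v^\alpha(z)W^\alpha(z)$ with $v^\alpha$ a unit at $(0,0)$ and $W^\alpha$ a Weierstrass polynomial of degree $N$ in $z_2$ whose coefficients are analytic in $z_1$ near $0$. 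The $N$ roots of $W^\alpha$, viewed as functions of $z_1$, parametrize the branches of $\mathcal{V}_\alpha(\Psi)$ through the origin, and here two ingredients combine. First, transporting Lemma \ref{lem:levsets} through the conformal maps $\beta,\beta^{-1}$ shows that these branches are graphs of functions real-analytic near $0$ and vanishing there, so no genuine fractional powers occur and all coefficients are real. Second, Theorems $2.16$ and $2.20$ of \cite{BKPS} match the Puiseux data of $\mathcal{V}_\alpha(\Psi)$ with that of $\mathcal{Z}(q)$ in \eqref{eqn:qfactor}: on the $j$-th group the level-set branch carries the full polynomial tail $q_j$ and first deviates from the corresponding zero-set branch at order $2L_j$, and there are exactly $M_j$ such branches. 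Hence every root of $W^\alpha$ has the form $z_2=-q_j(z_1)-z_1^{2L_j}\psi^\alpha_{j,m}(z_1)$ with $\psi^\alpha_{j,m}$ real analytic near $0$.

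Finally, since each root of $W^\alpha$ is an analytic function of $z_1$, $W^\alpha$ splits over the ring of convergent power series in $z_1$ as $\prod_{j=1}^J\prod_{m=1}^{M_j}(z_2+q_j(z_1)+z_1^{2L_j}\psi^\alpha_{j,m}(z_1))$, and extracting the scalar $1-\alpha$ from $v^\alpha$ and setting $u^\alpha:=v^\alpha/(1-\alpha)$ yields \eqref{eq:BKPSformula}. The main obstacle is the step before last: establishing the precise form of the level-set branches --- in particular that they carry exactly the polynomials $q_j$ and occur with multiplicity exactly $M_j$ --- is the contact-order bookkeeping imported from \cite{BKPS}; granting that, the order-of-vanishing count, the Weierstrass reduction, and the accounting of the excluded values of $\alpha$ are routine.
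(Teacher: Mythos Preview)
The paper does not actually prove this theorem: it is imported wholesale from \cite{BKPS} (the sentence immediately preceding the two statements reads ``We encode the important information from \cite{BKPS} in the following two theorems''), and the paper then simply writes $\mathbb{T}_q$ for the set of $\alpha$ for which the conclusion holds. Your sketch is not a different proof so much as a reconstruction of the argument that \cite{BKPS} itself carries out --- order-of-vanishing at the origin, Weierstrass preparation in $z_2$, and identification of the Puiseux data of $\mathcal{V}_\alpha$ with that of $\mathcal{Z}(q)$ --- and you are candid that the decisive step (that the branches carry exactly the polynomials $q_j$ with the multiplicities $M_j$) is the BKPS contact-order bookkeeping. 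So in substance you and the paper are doing the same thing: citing \cite{BKPS}.

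One point in your outline deserves tightening. You invoke Lemma~\ref{lem:levsets} to conclude that the $N$ roots of the Weierstrass polynomial $W^\alpha$ are real-analytic in $z_1$, i.e.\ that no genuine fractional Puiseux exponents appear. But Lemma~\ref{lem:levsets} counts the $n$ \emph{global} branches of $\mathcal{C}_\alpha$, not the number passing through the singularity, so it does not by itself force all $N$ local roots of $W^\alpha$ to be real for real $x_1$. What actually gives this is either \cite[Theorem~2.9]{BPSprep} (the local analytic parametrization of level curves at singularities, which is precisely what Step~2 of Lemma~\ref{lem:levsets} invokes) or the direct observation that for $\zeta_1\in\mathbb{T}$ the slice $\phi(\zeta_1,\cdot)$ is a finite Blaschke product, so every solution of $\tilde p(\zeta_1,z_2)=\alpha p(\zeta_1,z_2)$ lies on $\mathbb{T}$, hence after the Cayley transform every root of $(\bar q-\alpha q)(x_1,\cdot)$ is real. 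Once that is in hand, your Weierstrass-preparation packaging is fine, and the normalization $u^\alpha=v^\alpha/(1-\alpha)$ is harmless since $\alpha=1$ was excluded.
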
 

\begin{remark} Basically, Theorems \ref{thm:qfactor} and \ref{thm:Valpha} describe the branches of the zero set $\mathcal{Z}(q)$ and the level set $ \mathcal{V}_{\alpha}(\Psi)$ near $(0,0)$. To denote that we are now looking specifically at $\mathbb{R}^2$, we use variables $x = (x_1, x_2)$. Then Theorem \ref{thm:qfactor} says that near $(0,0)$, each branch of  $\mathcal{Z}(q)$ in $\mathbb{R}^2$ is of the form
\[ x_2 = -q_j(x_1)-x_1^{2L_j}\psi_j\left(\mu^{m}_jx_1^{\frac{1}{M_j}}\right),\]
where the $q_j$ and $\psi_j$ satisfy certain properties. Similarly, Theorem \ref{thm:Valpha} says that near $(0,0)$, each branch of $ \mathcal{V}_{\alpha}(\Psi)$ is of the form
\[ x_2 = -q_j(x_1)-x_1^{2L_j}\psi^{\alpha}_{j,m}(x_1),\]
where the $q_j$ and $\psi^{\alpha}_{j,m}$ satisfy certain properties. It is very important to note that the polynomials $q_j$ are the same in the two theorems. However, the $\psi_j$ and $\psi^{\alpha}_{j,m}$ are not. They actually exhibit immediate disagreement because $\text{Im}(\psi_j(0)) \ne 0$ and $\psi^{\alpha}_{j,m}(0) \in \mathbb{R}$.
\end{remark}

We let $\mathbb{T}_q$ denote the set of $\alpha \in \mathbb{T}$ for which Theorem \ref{thm:Valpha} applies. This is $\mathbb{T}$ with a finite set removed and corresponds to the $\alpha$ for which $\bar{q} -\alpha q$ has a factorization mirroring that of $q$. As a consequence, for $\alpha \in \mathbb{T}_q$, can define the contact order of each branch of $\mathcal{C}_\alpha$ as follows.

\begin{definition} \label{def:contactorder} Let $\zeta_2 = g^{\alpha}_j(\zeta_1)$ be a branch of $\mathcal{C}_{\alpha}$ going through $(1,1)$ and let $x_2 = h^{\alpha}_j(x_1)$ be the corresponding branch of $\mathcal{V}_{\alpha}$ going through $(0,0)$. Then (referring to Theorem \ref{thm:Valpha}),
\[ h^{\alpha}_j(x_1) = -q_k(x_1)-x_1^{2L_k}\psi^{\alpha}_{k,m}(x_1), \text{ for some pair $(k,m)$}.\]
We then say that $K_j:=2L_k$ is both the \textbf{contact order} of $\phi$  at $(1,1)$ for the branch  $\zeta_2 = g^{\alpha}_j(\zeta_1)$ of $\mathcal{C}_{\alpha}$ and the \textbf{contact order} of $\Psi$ at $(0,0)$ for the branch $x_2 = h^{\alpha}_j(x_1)$ of $\mathcal{V}_{\alpha}$.
\end{definition} 

We now have enough machinery to state our main result, which precisely describes how a weight function $W^\alpha_j$ behaves near a singularity $(\tau, \gamma)$ of $\phi$.

\begin{theorem} \label{thm:Waj}
Assume the setup of Theorem \ref{thm:2clarkformula} and let $W^\alpha_j$ be given as in \eqref{eq:Wadef}.  For all but finitely many $\alpha \in \mathbb{T}$, the following holds. 
If $(\tau, \gamma) \in\mathbb{T}^2$ is a singularity of $\phi$ and $\zeta_2 = g^{\alpha}_j(\zeta_1)$ is a branch of $\mathcal{C}_\alpha$ going through $(\tau, \gamma)$, then there are constants $c, C$ such that
\begin{equation} \label{eqn:walphabd} 0<c\leq \frac{W^{\alpha}_j(\zeta)}{|\zeta-\tau|^{K_j}}\leq C\end{equation}
for all $\zeta$ in a neighborhood of $\tau$, where $K_j$ is the contact order of $\phi$ at $(\tau,\gamma)$ associated with the branch $\zeta_2 = g^{\alpha}_j(\zeta_1)$ as given in Definition \ref{def:contactorder}.
\end{theorem}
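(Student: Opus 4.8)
The plan is to work in the half-plane model and estimate the two factors in the formula \eqref{eq:Wareduct} for $W^\alpha_j$ separately, using the Puiseux factorizations from Theorems \ref{thm:qfactor} and \ref{thm:Valpha}. By the change of variables set up above, it suffices to prove the analogous bound for $\Psi = \bar q/q$ at the origin: if $x_2 = h^\alpha_j(x_1)$ is the branch of $\mathcal{V}_\alpha$ through $(0,0)$, then $|\partial_{z_2}\Psi(x_1, h^\alpha_j(x_1))|^{-1}$ is comparable to $|x_1|^{K_j}$ near $x_1 = 0$; translating back to $\mathbb{D}^2$ via $\beta$ only multiplies by smooth nonvanishing factors near $(1,1)$, so the comparability constants are preserved (up to harmless adjustment) and $|\zeta - \tau| \asymp |x_1|$.

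First I would use the factored form \eqref{eq:Wareduct}, namely
\[
W^\alpha_j(\zeta) = \frac{|q(x_1, h^\alpha_j(x_1))|}{\left|\partial_{z_2}\bar q(x_1, h^\alpha_j(x_1)) - \alpha\, \partial_{z_2} q(x_1, h^\alpha_j(x_1))\right|}
\]
(after the change of variables, with the smooth conversion factors absorbed). For the numerator, plug the branch $x_2 = h^\alpha_j(x_1) = -q_k(x_1) - x_1^{2L_k}\psi^\alpha_{k,m}(x_1)$ into the factorization \eqref{eqn:qfactor} of $q$. The one factor of \eqref{eqn:qfactor} associated with the same base polynomial $q_k$ and the matching root index contributes $x_2 + q_k(x_1) + x_1^{2L_k}\psi_k(\cdots) = x_1^{2L_k}\big(\psi_k(\mu^\ell_k x_1^{1/M_k}) - \psi^\alpha_{k,m}(x_1)\big)$, and the crucial point (the ``immediate disagreement'' noted in the remark) is that $\mathrm{Im}\,\psi_k(0) \neq 0$ while $\psi^\alpha_{k,m}(0) \in \mathbb{R}$, so the parenthesized difference is bounded away from $0$ near the origin; this factor is therefore $\asymp |x_1|^{2L_k} = |x_1|^{K_j}$. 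Every other factor of \eqref{eqn:qfactor} — those from a different $q_i$, or the same $q_k$ but a non-matching root — is bounded away from $0$ on the branch near the origin, since on the branch $x_2 + q_i(x_1) = O(x_1)$ picks up $q_i'(0) \neq q_k'(0)$ when $i \neq k$ (distinct positive slopes), and picks up a nonzero difference of the $\psi$'s or a lower-order mismatch otherwise. Hence $|q(x_1, h^\alpha_j(x_1))| \asymp |x_1|^{K_j}$.

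For the denominator I would show $|\partial_{z_2}\bar q - \alpha\, \partial_{z_2} q|$ evaluated on the branch is bounded above and below by positive constants near $x_1 = 0$. Differentiating \eqref{eq:BKPSformula} in $z_2$ via the product rule, every term is a product of $(\prod\text{of the other factors})$ times $\partial_{z_2}$ of one factor (which is $1$ plus $O(x_1^{2L_j})$); on the branch the term in which we differentiate the branch's own factor gives a product of all the remaining factors times $(1 + o(1))$, while every other term contains the vanishing factor $x_2 + q_k(x_1) + x_1^{2L_k}\psi^\alpha_{k,m}(x_1) \equiv 0$ and so drops out. Thus on the branch, $\partial_{z_2}(\bar q - \alpha q) = (1-\alpha) u^\alpha(x_1, h^\alpha_j) \prod_{(i,\ell)\neq(k,m)}\big(h^\alpha_j(x_1) + q_i(x_1) + x_1^{2L_i}\psi^\alpha_{i,\ell}(x_1)\big)$, which is a finite product of factors each bounded away from $0$ and $\infty$ near the origin (by the same slope/mismatch argument as for the numerator), times the nonzero constant $(1-\alpha)$ and the unit $u^\alpha$. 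Therefore the denominator is $\asymp 1$, and combining with the numerator estimate gives $W^\alpha_j(\zeta) \asymp |\zeta - \tau|^{K_j}$, i.e. \eqref{eqn:walphabd}.

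The finitely many excluded $\alpha$ are exactly those outside $\mathbb{T}_q$ (where Theorem \ref{thm:Valpha} fails) together, if necessary, with the excluded value $\alpha_0$ from Lemma \ref{lem:CO} so that ``contact order'' is unambiguous; one also excludes the finitely many exceptional values so that $g^\alpha_j$ from Theorem \ref{thm:2clarkformula} genuinely parametrize. The main obstacle I anticipate is the bookkeeping in the ``every other factor is bounded away from zero on the branch'' claims: one must carefully compare two branches of the algebraic curve that share the same initial polynomial part $q_k$ but differ in the fractional-power tail, and verify that their difference along the branch vanishes to exactly the order controlled by $K_j$ and no faster — this is where the structural input of \cite{BKPS} (Theorems \ref{thm:qfactor} and \ref{thm:Valpha}, and the relation between the $\psi_j$ and $\psi^\alpha_{j,m}$) does the real work, and where one must be careful that the comparison is uniform as $x_1 \to 0$ through the relevant sector.
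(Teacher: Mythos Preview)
Your overall strategy---pass to the half-plane, substitute the branch into the Puiseux factorizations of $q$ and of $\bar q-\alpha q$, and read off the order of vanishing---matches the paper's. The gap is in the step where you assert that ``every other factor is bounded away from $0$'' in both the numerator and the denominator. This is false in general. There is no reason that distinct base polynomials $q_i$ and $q_{j_0}$ have distinct first derivatives at $0$ (Theorem~\ref{thm:qfactor} only says $q_j'(0)>0$); more to the point, two distinct $q_i$'s can agree with $q_{j_0}$ to arbitrarily high order, and when $M_{j_0}>1$ there are in fact $M_{j_0}$ factors in \eqref{eqn:qfactor} sharing the very same $q_{j_0}$, each of which vanishes to order $2L_{j_0}$ on the branch. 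The same happens in the denominator. So your claimed ``numerator $\asymp|x_1|^{K_j}$, denominator $\asymp 1$'' is not what actually occurs.

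The paper's fix is not to bound the individual factors away from zero but to prove a \emph{cancellation}: for every index $(k,m)\neq(j_0,m_0)$, the order of vanishing $N(h^\alpha_j,k,m)$ of the corresponding numerator factor equals the order $N_\alpha(h^\alpha_j,k,m)$ of the matching denominator factor, while the single leftover numerator factor (with index $(j_0,m_0)$) contributes exactly $K_j$. Establishing $N=N_\alpha$ is nontrivial and can genuinely fail for certain $\alpha$: if some branch of $\mathcal V_\alpha$ agrees with the initial segment $q_{j_0}(x_1)+x_1^{2L_{j_0}}\psi^\alpha_{j_0,m_0}(0)$ to one order higher than expected, the denominator picks up extra vanishing. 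The paper handles this with a separate lemma showing that such coincidences occur for at most finitely many $\alpha$, and these must be added to your excluded set (your list---complement of $\mathbb{T}_q$, the exceptional values, and $\alpha_0$ from Lemma~\ref{lem:CO}---is incomplete). You correctly flag this as ``the main obstacle,'' but the resolution requires the cancellation argument together with that additional finiteness lemma, not a direct lower bound on each factor.
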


The proof requires an additional technical lemma and so, we postpone the proof until the next section. However, we do observe a quick corollary.

\begin{corollary} Assume the setup of Theorem \ref{thm:2clarkformula} and let $W^\alpha_j$ be given as in \eqref{eq:Wadef}. If $\alpha$ is a parameter value for which Theorem \ref{thm:Waj} applies, then $W^{\alpha}_j$ is a bounded function on $\mathbb{T}$. 
\end{corollary}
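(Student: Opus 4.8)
The plan is to split $\mathbb{T}$ into a compact piece bounded away from the $z_1$-coordinates of the singularities of $\phi$, where boundedness is pure compactness, together with finitely many short arcs around those coordinates, where boundedness comes either from Theorem \ref{thm:Waj} or from a local continuity argument. Throughout I would keep $\alpha$ fixed, generic, and in the cofinite set for which Theorem \ref{thm:Waj} holds.

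First I would handle the part away from singularities. Let $\tau_1, \dots, \tau_K$ be the $z_1$-coordinates of the singularities of $\phi$, as in Lemma \ref{lem:unif}, and fix $\hat{\epsilon}>0$ small enough that the arcs comprising $S_{\hat{\epsilon}}$ from \eqref{eqn:Se} are pairwise disjoint. On $(\mathbb{T}\setminus S_{\hat{\epsilon}})\times\mathbb{T}$ the partial derivative $\frac{\partial\phi}{\partial z_2}$ is continuous and nonvanishing, by the standard property of RIFs quoted at the start of the proof of Lemma \ref{lem:unif}; the proof of Lemma \ref{lem:unif} also shows that $\zeta\mapsto\{g^\alpha_1(\zeta),\dots,g^\alpha_n(\zeta)\}$ varies continuously as an unordered subset of $\mathbb{T}$. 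Consequently the function $\zeta\mapsto\min_{1\le j\le n}\bigl|\frac{\partial\phi}{\partial z_2}(\zeta,g^\alpha_j(\zeta))\bigr|$ is continuous and strictly positive on the compact set $\mathbb{T}\setminus S_{\hat{\epsilon}}$, so it has a positive minimum $\delta_0$, and therefore $W^\alpha_j\le 1/\delta_0$ on $\mathbb{T}\setminus S_{\hat{\epsilon}}$ for every $j$.

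Next I would bound $W^\alpha_j$ on the arc of $S_{\hat{\epsilon}}$ around each $\tau_k$, for each fixed $j$, via two cases. If $(\tau_k,g^\alpha_j(\tau_k))$ is a singularity of $\phi$, then the branch $\zeta_2=g^\alpha_j(\zeta_1)$ passes through it and Theorem \ref{thm:Waj} gives a constant $C$ with $W^\alpha_j(\zeta)\le C|\zeta-\tau_k|^{K_j}\le C\,2^{K_j}$ for $\zeta$ in a neighborhood of $\tau_k$. If instead $(\tau_k,g^\alpha_j(\tau_k))$ is not a singularity, then $\phi$ is analytic in a bidisk about that point; moreover $\phi(\tau_k,\cdot)$ is a nonconstant finite Blaschke product, nonconstant because $\alpha$ is generic, so its derivative does not vanish on $\mathbb{T}$, i.e.\ $\frac{\partial\phi}{\partial z_2}(\tau_k,g^\alpha_j(\tau_k))\ne 0$, and by continuity near this regular point $W^\alpha_j$ is again bounded on a neighborhood of $\tau_k$. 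Since there are finitely many pairs $(k,j)$, after shrinking $\hat{\epsilon}$ so that all these neighborhoods contain the relevant arcs, I would conclude that $W^\alpha_j$ is bounded on $S_{\hat{\epsilon}}$ as well, hence on all of $\mathbb{T}$.

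The only genuine subtlety I anticipate is the continuity claim on $\mathbb{T}\setminus S_{\hat{\epsilon}}$: the individual parametrizing functions $g^\alpha_j$ need not be continuous across the base point $\tau_0$ of Lemma \ref{lem:levsets}, so one cannot treat $W^\alpha_j$ as a single continuous function on $\mathbb{T}$. This is why the estimate there is phrased through the continuously varying unordered set $\{g^\alpha_1(\zeta),\dots,g^\alpha_n(\zeta)\}$ rather than through the individual branches; it is also harmless to take $\tau_0$ away from $\tau_1,\dots,\tau_K$, so that no branch permutation occurs over a singular $z_1$-coordinate and the two-case analysis above is unaffected. Everything else is routine compactness and continuity, with the real content — the order-of-vanishing estimate near a singularity — already supplied by Theorem \ref{thm:Waj}.
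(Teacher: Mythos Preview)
Your proof is correct and follows essentially the same strategy as the paper's: bound $W^\alpha_j$ near each singular $z_1$-coordinate via Theorem~\ref{thm:Waj} (or local analyticity when the branch misses the singularity), and handle the complement by compactness and nonvanishing of $\frac{\partial\phi}{\partial z_2}$. The only organizational differences are that the paper removes intervals only around the $z_1$-coordinates the given branch actually passes through and then folds your case (b) into a contradiction argument on the complement, whereas you remove all singular $z_1$-coordinates upfront and treat the two cases explicitly; your care with the base-point discontinuity of the $g^\alpha_j$ is a nice touch the paper leaves implicit.
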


\begin{proof} Let $\tau_1, \dots, \tau_k$ denote the $z_1$-coordinates of the singularities of $\phi$ that the branch $\zeta_2 = g^{\alpha}_j(\zeta_1)$ passes through. By Theorem \ref{thm:Waj}, there are open intervals $I_\ell$ around each $\tau_\ell$ such that $W^{\alpha}_j$ is bounded on $I_{\tau_\ell}$. Now, consider $W^{\alpha}_j$ on $\mathbb{T} \setminus \left( \cup_{\ell =1}^k I_\ell\right)$. By assumption,
\[ \left|\tfrac{\partial \phi}{\partial z_2}\left(\zeta, g^{\alpha}_j(\zeta)\right)\right|\]
is continuous on $\mathbb{T} \setminus \left( \cup_{\ell =1}^k I_\ell\right)$. Thus, if $W^{\alpha}_j$ is unbounded on $\mathbb{T} \setminus \left( \cup_{\ell =1}^k I_\ell\right)$, there must be a point $\tau_0$ such that 
\[  \left|\tfrac{\partial \phi}{\partial z_2}\left(\tau_0, g^{\alpha}_j(\tau_0)\right)\right| =0.\]
But, this would imply that the finite Blaschke product $\phi_{\tau_0}(z) := \phi(\tau_0, z)$ is constant and thus, the line $\{ \zeta^2 \in \mathbb{T}: \zeta_1 = \tau_0\}$ is in some $\mathcal{C}_\lambda$. Note that $\alpha$ is generic and so, is not equal to this $\lambda$. But, this implies that the point $(\tau_0, g^{\alpha}_j(\tau_0))$ is on two different level sets of $\phi$ and so, has to be a singularity of $\phi$. This is a contradiction and so, $W^{\alpha}_j$ must be bounded after all. 
\end{proof}

\subsection{Proof of Theorem \ref{thm:Waj}}

The proof of Theorem \ref{thm:Waj} basically involves translating $W^\alpha_j$ to the setting of $q, \bar{q}$ and using the factorization results to identify the natural order vanishing of the numerator and denominator of the translated $W^\alpha_j$ near the singularity. However, there is the possibility of additional, unexpected vanishing in the denominator. 

To account for that, we require a somewhat technical lemma that is based on the ideas from \cite{BKPS}. Specifically, we say that the branch of $q$ (as given in the factorization \eqref{eqn:qfactor}) with index $(j,m)$ has initial segment $r \in \mathbb{R}[z]$ of order $n$ if 
\[r(z_1) -\left( q_j(z_1)+z_1^{2L_j}\psi_j(\mu^{m}_jz_1^{\frac{1}{M_j}}) \right ) = O\left( |z_1|^n \right),\]
and for $\alpha \in \mathbb{T}_q$, we say  the branch of $\bar{q}-\alpha q$ (as given in the factorization \eqref{eq:BKPSformula}) with index $(j,m)$ has initial segment $r \in \mathbb{R}[z]$ of order $n$ if
\[r(z_1) -\left( q_j(z_1)+z_1^{2L_j}\psi^{\alpha}_{j,m}(z_1)\right ) = O\left( |z_1|^n \right).\]

Then we have the following lemma.

\begin{lemma} \label{lem:badalpha} Given the factorizations and definitions above, there are at most finitely many $\alpha \in \mathbb{T}$ such that for some pair $(j,m)$, 
\begin{equation} \label{eqn:rbdef} r(z_1):=q_j(z_1) + z_1^{2L_j}\psi^{\alpha}_{j,m}(0),\end{equation}
is an initial segment of a branch of $q$ of order $2 L_j +1$.
\end{lemma}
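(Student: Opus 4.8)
The plan is to compare the two Puiseux-type expansions in Theorems~\ref{thm:qfactor} and \ref{thm:Valpha} at the level of the coefficients appearing just past the common polynomial part $q_j$, and show that requiring a specific coincidence (namely that $r(z_1)=q_j(z_1)+z_1^{2L_j}\psi^{\alpha}_{j,m}(0)$ matches a branch of $q$ up to order $2L_j+1$) pins down $\alpha$ via a single nontrivial algebraic equation, which therefore has finitely many solutions on $\mathbb{T}$. Concretely, fix a pair $(j,m)$. The branch of $q$ with index $(j,m)$ has the form $x_2 = -q_j(x_1) - x_1^{2L_j}\psi_j(\mu_j^m x_1^{1/M_j})$, so its coefficient of $x_1^{2L_j}$ is $\psi_j(0)$, which has $\mathrm{Im}(\psi_j(0))>0$; the next Puiseux exponent past $2L_j$ is $2L_j + 1/M_j$ (or possibly a higher multiple of $1/M_j$, depending on the expansion of $\psi_j$), with coefficient $\mu_j^m \psi_j'(0)$ or a similar expression determined entirely by the geometry of $q$. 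On the other side, the branch of $\bar q - \alpha q$ with index $(j,m)$ is $x_2 = -q_j(x_1) - x_1^{2L_j}\psi^{\alpha}_{j,m}(x_1)$ with $\psi^{\alpha}_{j,m}(0)\in\mathbb{R}$, so the coefficient of $x_1^{2L_j}$ is the real number $\psi^{\alpha}_{j,m}(0)$.

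First I would make precise what ``$r$ is an initial segment of a branch of $q$ of order $2L_j+1$'' forces: since $\deg q_j < 2L_j$ and $r - q_j = z_1^{2L_j}\psi^{\alpha}_{j,m}(0)$, matching a branch of $q$ up to order $2L_j+1$ forces that branch's coefficient of $x_1^{2L_j}$ to equal the \emph{real} number $\psi^{\alpha}_{j,m}(0)$, and forces agreement of all lower-order (fractional-exponent) terms as well — but the branches of $q$ have their $x_1^{2L_j}$-coefficient equal to $\psi_j(0)$ with strictly positive imaginary part. So that direct comparison is already a contradiction \emph{unless} the branch of $q$ being matched is not one of the $(j',m')$ branches coming from the same cluster — i.e. the genuine content is that $r$ could coincide with the initial segment of some \emph{other} branch of $q$ whose leading behavior happens to agree with $q_j$ up through order $2L_j$ but which is a ``polynomial-like'' branch (one where the relevant $\psi$ evaluated appropriately is real, or the branch terminates). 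This is exactly the subtle case that \cite{BKPS} handles, so I would invoke the structural description there: the finitely many branches of $q$ are parametrized by finitely many data, and for $r$ of the special form \eqref{eqn:rbdef} to be an order-$(2L_j+1)$ initial segment, the value $\psi^{\alpha}_{j,m}(0)$ must equal a specific real algebraic number determined by $q$ alone (independent of $\alpha$).

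The second and decisive step is then to show that $\alpha \mapsto \psi^{\alpha}_{j,m}(0)$, viewed as a function on $\mathbb{T}_q$, takes any prescribed value only finitely often. This should follow because $\psi^{\alpha}_{j,m}(0)$ is, up to the known normalizations, a rational (or algebraic) function of $\alpha$: the factorization \eqref{eq:BKPSformula} is obtained from $\bar q - \alpha q$ by an algebraic (Weierstrass/Puiseux) procedure whose coefficients depend polynomially on the coefficients of $\bar q - \alpha q$, hence rationally on $\alpha$; thus $\psi^{\alpha}_{j,m}(0)$ is a rational function of $\alpha$, not identically equal to any fixed real constant (because for generic $\alpha$ it is genuinely $\alpha$-dependent — indeed $\psi^{\alpha}_{j,m}(0)\to$ something tied to $\psi_j$ data degenerately, and one can check non-constancy directly from \cite{BKPS}). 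A non-constant rational function attains each value finitely often, so for each of the finitely many pairs $(j,m)$ there are finitely many ``bad'' $\alpha$, and the union over $(j,m)$ is still finite. Adjoining the finitely many $\alpha \notin \mathbb{T}_q$ gives the claim.

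The main obstacle I anticipate is the rigorous bookkeeping in the first step: carefully reading off from Theorems~\ref{thm:qfactor} and \ref{thm:Valpha} (and the underlying results in \cite{BKPS}) exactly which branch of $q$ an initial segment of the given form could possibly match, and extracting from that matching a genuine polynomial-in-$\alpha$ (or rational-in-$\alpha$) equation rather than a tautology. In particular one must rule out the degenerate possibility that $\psi^{\alpha}_{j,m}(0)$ is \emph{identically} equal to the relevant real constant for all $\alpha$; I expect this to be handled by the observation that $\psi_j$ has nonreal value at $0$ while $\psi^{\alpha}_{j,m}(0)$ is real, together with the continuity/analyticity of the factorization data in $\alpha$ established in \cite{BKPS}, which forces genuine $\alpha$-dependence. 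Everything else — finiteness of zeros of a non-constant rational function, finiteness of $\mathbb{T}\setminus\mathbb{T}_q$, finiteness of the index set $\{(j,m)\}$ — is routine.
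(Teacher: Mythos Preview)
Your first step---that for each fixed index $j_0$ there are only finitely many real numbers $b$ for which $r_b(z_1)=q_{j_0}(z_1)+bz_1^{2L_{j_0}}$ can be an order-$(2L_{j_0}+1)$ initial segment of some branch of $q$---is exactly what the paper does. The divergence, and the gap, is in your second step.

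You propose to show that $\alpha\mapsto\psi^{\alpha}_{j,m}(0)$ is a non-constant rational (or algebraic) function of $\alpha$ and hence hits each of the finitely many admissible $b$'s only finitely often. Two problems. First, the claim of rational dependence is not right as stated: Puiseux coefficients of $\bar q-\alpha q$ are algebraic in $\alpha$, not rational, and the labeling by $(j,m)$ need not be globally single-valued on $\mathbb{T}_q$, so one has to manage branching. That can probably be repaired. Second, and more seriously, your non-constancy argument does not work. The observation that $\psi_j(0)$ is nonreal while $\psi^{\alpha}_{j,m}(0)$ is real is a comparison between the $q$-factorization and the $(\bar q-\alpha q)$-factorization; it says nothing about whether $\psi^{\alpha}_{j,m}(0)$ varies as $\alpha$ varies. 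There is no a priori obstruction to $\psi^{\alpha}_{j,m}(0)$ being constant in $\alpha$ on an arc, and you offer no mechanism to rule it out. Without non-constancy the whole argument collapses.

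The paper sidesteps this entirely with a counting argument. Fix one of the admissible values $b$ and let $N$ be the number of branches of $q$ having $r_b$ as an order-$(2L_{j_0}+1)$ initial segment; each such branch necessarily has $2L_k>2L_{j_0}$. Since the polynomials $q_j$ are the same in \eqref{eqn:qfactor} and \eqref{eq:BKPSformula}, $r_b$ is also an initial segment of at least $N$ branches of $\bar q-\alpha q$ for every $\alpha\in\mathbb{T}_q$. If in addition $\psi^{\alpha}_{j_0,m}(0)=b$ for some $m$, that contributes a further branch (one with $2L_k=2L_{j_0}$, hence genuinely new), so $r_b$ is an initial segment of at least $N+1$ branches of $\bar q-\alpha q$. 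But the proof of \cite[Theorem~2.21]{BKPS} says that, with at most one exceptional $\alpha$, the number of branches of $q$ and of $\bar q-\alpha q$ having $r_b$ as an order-$(2L_{j_0}+1)$ initial segment must agree. Hence at most one $\alpha\in\mathbb{T}_q$ can satisfy $\psi^{\alpha}_{j_0,m}(0)=b$. This gives finiteness without ever tracking $\psi^{\alpha}_{j,m}(0)$ as a function of $\alpha$, which is exactly the step you could not complete.
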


\begin{proof} 
Fix an index $j_0$ with $ 1\le j_0\le J$ and observe there are at most finitely many $b \in \mathbb{R}$ such that 
\begin{equation} \label{eqn:rbdef} r_b(z_1):=q_{j_0}(z_1) + bz_1^{2L_{j_0}}\end{equation}
is an initial segment of a branch of $q$ of order $2L_{j_0}+1$. In particular, if that happened, \eqref{eqn:rbdef} would have to be the initial part of a different $q_j$ appearing in the factorization of $q$. Since there are only finitely many such $q_j$, there are only finitely many such $b$.  In such situations, the given branch of $q$ (say with index $(j,m)$ as in \eqref{eqn:qfactor}) would have to satisfy $2L_j > 2L_{j_0}$ because 
$\text{Im} (\psi_j(0))  \ne 0$ and so, the degree $z_1^{2L_j}$ term in the branch cannot agree with the degree $z_1^{2L_j}$ term in $r_b$.

Fix a $(b, j_0)$ combination such that $r_b$ is an initial segment of order $2L_{j_0}+1$ of a branch of $q$. To prove the lemma, it will suffice to show that there is at most one $\alpha \in \mathbb{T}_q$ with $\psi^{\alpha}_{j_0,m}(0) =b$ for some $m$.  By assumption, there is some positive number $N$ such that  $r_b$ is an initial segment of order  $2L_{j_0}+1$ for exactly $N$ branches of $q$ as given in  \eqref{eqn:qfactor}. As this agreement must be happening between the terms in $r_b$ and the terms in the real polynomials $q_j$ in the branches, we can translate this information over to all $\bar{q}-\alpha q$ with $\alpha \in \mathbb{T}_q$. 

Specifically, for $\alpha \in \mathbb{T}_q$, the $q_j$ are the same in the two branch factorizations \eqref{eqn:qfactor} and \eqref{eq:BKPSformula}. Thus, $r_b$ also agrees to order $2L_{j_0}+1$ with $N$ of the $q_j$ (counted according to multiplicity) appearing in \eqref{eq:BKPSformula} and thus, is an initial segment of order $2L_{j_0}+1$  for at least $N$ branches of $\bar{q}-\alpha q$.  

Proceeding towards a contradiction, assume that two of these $\alpha$, call them $\alpha_1$ and $\alpha_2$, have $ \psi^{\alpha_i}_{j_0,m_i}(0) = b$ for some indices $m_1$ and $m_2$. Then $r_b$ is an initial segment of order $2L_{j_0}+1$ of the $(j_0, m_1)$ and $(j_0,m_2)$ branches of $\bar{q}-\alpha_1 q$ and $\bar{q}- \alpha_2 q$ respectively and these new branches are in addition to the $N$ branches already identified, since those had to satisfy $2L_j > 2L_{j_0}$.
This means that  $r_b$ is an initial segment of order $2L_{j_0}+1$ for at least $N+1$ branches of both $\bar{q}-\alpha_1 q$ and $\bar{q}-\alpha_2 q$.  But, by the discussion in the proof of Theorem 2.21 in  \cite{BKPS}, with the exception of at most one $\alpha \in \mathbb{T}_q$, $r_b$ must be an initial segment of order $2L_{j_0}+1$  for the same number of branches of $q$ and $\bar{q}-\alpha q$. This means that $r_b$ must be an initial segment of order $2L_{j_0}+1$  for at least $N+1$ branches of $q$, which gives our contradiction.
Thus, there is at most one $\alpha \in \mathbb{T}_q$ with $\psi^{\alpha}_{j_0,m}(0) =b$ for some $m$, and the proof is complete.
\end{proof}

Given that key technical lemma, we can now prove Theorem \ref{thm:Waj}.

\begin{proof} Without loss of generality, assume $(\tau, \gamma) =(1,1)$. Fix $\alpha \in \mathbb{T}$ and by omitting at most a finite number of $\alpha$, one can assume that $\alpha \in \mathbb{T}_q$ so the factorization in Theorem \ref{thm:Valpha} applies and $\alpha$ does not possess the behavior detailed in the statement of Lemma \ref{lem:badalpha}.

Let $\zeta_2 = g^{\alpha}_j(\zeta_1)$ be the branch $\mathcal{C}_\alpha$ going through $(1,1)$ associated with $W^\alpha_j$ and let $x_2 = h^{\alpha}_j(x_1)$ be the corresponding branch of $\mathcal{V}_\alpha$ going through $(0,0)$.
Define the related function 
\[V^{\alpha}_j(x)=\frac{|q(x,h_j^{\alpha}(x))|}{|\frac{\partial \bar{q}}{\partial z_2}(x,h_j^{\alpha}(x))-\alpha \frac{\partial q}{\partial z_2}(x,h_j^{\alpha}(x))|}.\]
Because $(\beta^{-1})'(x)$ is bounded above and below in a neighborhood of the origin, one can use the formula for $W^\alpha_j$ in \eqref{eq:Wareduct} to show that there are constants $d, D$ such that
\begin{equation} \label{eqn:valphabd} 0<d \leq \frac{V^{\alpha}_j(x)}{|x|^{K_j}}\leq D\end{equation}
if and only if \eqref{eqn:walphabd} holds. The remainder of the proof establishes \eqref{eqn:valphabd} by identifying the order of vanishing at $x=0$ of both the numerator and denominator of $V^\alpha_j$ and showing that the difference in these orders of vanishing is exactly $K_j$.

We first study the denominator of $V^{\alpha}_j$. Differentiating the factorization in \eqref{eq:BKPSformula} with respect to $z_2$ gives
\[ \begin{aligned} 
\tfrac{\partial }{\partial z_2}\big[\bar{q}(z)-\alpha q(z)\big]&=(1-\alpha)\frac{\partial u^{\alpha}}{\partial z_2}(z )\prod_{k=1}^J\prod_{m=1}^{M_k}\left(z_2+q_k(z_1)+z_1^{2L_k}\psi^{\alpha}_{k,m}(z_1)\right)\\
&+
(1-\alpha)u^{\alpha}(z)\sum_{\#\textrm{ of factors in}\, \eqref{eq:BKPSformula} }\left( \prod_{\mathrm{one\,\, factor\,\, deleted}}\left(z_2+q_k(z_1)+z_1^{2L_k}\psi^{\alpha}_{k,m}(z_1)\right)\right).
\label{eq:BKPSder}
\end{aligned} \]
Recall that 
\[ h^{\alpha}_j(z_1) = -q_{j_0}(z_1)-z_1^{2L_{j_0}}\psi^{\alpha}_{j_0,m_0}(z_1), \]
for some pair $(j_0,m_0)$.
Then substituting $(x, h^{\alpha}_j(x))$ into the above $z_2$-derivative gives the following formula for the denominator of  $V^{\alpha}_j$
\[
\tfrac{\partial }{\partial z_2}\big[\bar{q}- \alpha q\big](x,h_j^{\alpha}(x)) = (1-\alpha)u^{\alpha}(x,h_j^{\alpha}(x)) \prod_{(k,m) \ne (j_0,m_0)}\Big(h^{\alpha}_j(x)+q_k(x)+ x^{2L_k}\psi^{\alpha}_{k,m}(x)\Big),
\]
where all but one term vanished when we substituted in $z_1=x$ and $z_2 = h^{\alpha}_j(x).$
Let $N_\alpha(h^\alpha_j, k, m)$ be the order of vanishing of the term
\[ h^{\alpha}_j(x)+q_k(x)+ x^{2L_k}\psi^{\alpha}_{k,m}(x)\]
at $x=0$, so that the order of vanishing of the denominator of $V^\alpha_j$ at $x=0$ is 
\[ \sum_{(k,m) \ne (j_0,m_0)} N_\alpha(h^\alpha_j, k, m).\]
We can similarly study the numerator of $V^\alpha_j$. Specifically, substituting $(x, h^{\alpha}_j(x))$ into the factorization of $q$ from \eqref{eqn:qfactor} gives
\[ q(x, h^{\alpha}_j(x)) = u(x, h^{\alpha}_j(x)) \prod_{k=1}^J\prod_{m=1}^{M_k}\left(h^{\alpha}_j(x)+q_k(x)+x^{2L_k}\psi_k(\mu^{m}_kx^{\frac{1}{M_k}})\right).\]
Let $N(h^\alpha_j, k, m)$ be the order of vanishing of the term
\[ h^{\alpha}_j(x)+q_k(x)+x^{2L_k}\psi_k(\mu^{m}_kx^{\frac{1}{M_k}})\]
at $x=0$, so that the order of vanishing of the numerator of $V^\alpha_j$ at $x=0$ is 
\[ \sum_{(k,m)} N(h^\alpha_j, k, m).\]
Because $\text{Im} (\psi_{j_0}(0) ) \ne 0,$ one can check that $N(h^\alpha_j, j_0,m_0) = K_j$. Furthermore, we claim that for each $(k,m) \ne (j_0,m_0)$ we have 
\begin{equation} \label{eqn:nalpha} N_\alpha(h^\alpha_j, k, m) = N(h^\alpha_j, k, m).\end{equation}
Once we have \eqref{eqn:nalpha}, comparing the numerator and denominator of $V^\alpha_j$ near $x=0$ will yield \eqref{eqn:valphabd}. 

We establish \eqref{eqn:nalpha} by contradiction: assume there is some $(k,m) \ne (j_0,m_0)$ such that $N_\alpha(h^\alpha_j, k, m) \ne N(h^\alpha_j, k, m).$ If either $N_\alpha(h^\alpha_j, k, m)$ or  $N(h^\alpha_j, k, m)$ was less than $2L_k$, they would have to be equal, since the underlying branches are equal to that order. So, it must be the case that one is greater than $2L_k$. As  $\text{Im}(\psi_{j_0}(0) ) \ne 0$, we must have $N(h^\alpha_j, k, m)\le 2L_k$ and so we can conclude that $ N_\alpha(h^\alpha_j, k, m) > 2L_k$.

This implies that both the $(m_0,j_0)$ and $(k,m)$ branches of $\mathcal{V}_\alpha$ are of the form
\begin{equation} \label{eqn:repeatbranch} x_2 = -q_k(x)- x^{2L_k}\psi^{\alpha}_{k,m}(0) +O\big(|x|^{2L_k +1}\big).\end{equation}
Fix $x$ close to zero and define $\Psi_x(x_2) := \Psi(x, x_2)$. Then 
\[\Psi_x (x_2) = \phi(\beta^{-1}(x), \beta^{-1}(x_2)).\]
Recall that $\phi(\zeta, \cdot)$ is a nonconstant finite Blaschke product for all but finitely many $\zeta \in \mathbb{T}$. Thus, by properties of finite Blaschke products,  for almost every $x$, as inputs to $\Psi_x$ go through the $x_2$ values between those of the two branches of $\mathcal{V}_\alpha$ of form \eqref{eqn:repeatbranch}, it must output each $\lambda \in \mathbb{T}$ at least once. This implies for each $\lambda \in \mathbb{T}_q$, there is actually a branch of $\mathcal{V}_\lambda$ of form \eqref{eqn:repeatbranch}. Then the discussion in the proof of Theorem 2.21 in  \cite{BKPS} implies that
 \[q_k(x)+x^{2L_k}\psi^{\alpha}_{k,m}(0)\]
 is an initial segment of a branch of $q$ of order $2 L_k +1$. As this is the exact condition discussed in Lemma \ref{lem:badalpha}, this contradicts the fact that we already removed such $\alpha$ values from consideration. This establishes \eqref{eqn:nalpha} and completes the proof.
\end{proof}

\begin{remark}
Note that it is indeed possible to have lower order of vanishing for certain values of $\alpha$, so that it is necessary to allow exclusion of some finite collection in the statement of Theorem \ref{thm:Waj}. See for instance \cite[Example 5.2]{BCSMich}, where all weights $W^{\alpha}$ exhibit order $4$ vanishing at the unique singularity of that RIF, except for $W^{-1}$ which vanishes to order 2.

\end{remark}

\section{A tridisk example}\label{sec:example}
For $s\geq 3$, consider the three-variable rational inner function
\begin{equation}
\phi_{s}(z)=\frac{\tilde{p}_s(z)}{p_s(z)}=\frac{sz_1z_2z_3-z_1z_2-z_1z_3-z_2z_3}{s-z_1-z_2-z_3}, \quad z\in \mathbb{D}^3.
\label{3dfave}
\end{equation}
This function and its close relatives often appear as basic tridisk examples, see e.g. \cite{Kne11, BKPS}. When $s>3$, the polynomial $p_s$ has no zeros in the closed tridisk. Hence $\phi_s$ has no singularities on $\overline{\mathbb{D}^3}$, and Theorem \ref{thm:dclarkformula} applies.
A computation shows that
\[\frac{\partial \phi_s}{\partial z_3}(z)=\frac{s^2z_1z_2-s(z_1^2z_2+z_1z_2^2+z_1+z_2)+z_1^2+z_1z_2+z_2^2}{(s-z_1-z_2-z_3)^2}.\]
For $\alpha\in \mathbb{T}$ fixed, the set $\{\zeta \in \mathbb{T}^3\colon \tilde{p}_s(\zeta)-\alpha p_s(\zeta)=0\}$ can be parametrized as
\[\zeta_3=\psi^{\alpha}_s(\zeta_1,\zeta_2)=\frac{\alpha s-\alpha \zeta_1-\alpha \zeta_2+\zeta_1\zeta_2}{s\zeta_1\zeta_2-\zeta_1-\zeta_2+\alpha}, \quad (\zeta_1, \zeta_2) \in \mathbb{T}^2.\]
As is guaranteed by \cite[Theorem 4.8]{BPSmulti}, each $\psi^{\alpha}_s$ is the reciprocal of an RIF on $\mathbb{D}^2$, and each $\psi^{\alpha}_s$ is continuous on $\overline{\mathbb{D}^2}$ when $s>3$ since $\phi_s$ has no singularities. This can also be checked directly in this simple case. Plugging $\zeta_3=\psi^{\alpha}_s$ into $\frac{\partial \phi_s}{\partial z_3}$ and simplifying, we get that for $\alpha\in \mathbb{T}$ fixed and for $f\in C(\mathbb{T}^3)$, the Clark measure $\sigma_{s, \alpha}$ satisfies
\[\int_{\mathbb{T}^3}f(\zeta)d\sigma_{s,\alpha}(\zeta)=\int_{\mathbb{T}^2}f\left(\zeta_1,\zeta_2, \psi^{\alpha}_s(\zeta_1,\zeta_2)\right)W_{s,\alpha}(\zeta_1,\zeta_2)dm(\zeta_1,\zeta_2),\]
where 
\[W_{s,\alpha}(\zeta_1,\zeta_2)=\left|\frac{s^2\zeta_1\zeta_2-s(\zeta_1^2\zeta_2+\zeta_1\zeta_2^2+\zeta_1+\zeta_2)+\zeta_1^2+\zeta_1\zeta_2+\zeta_2^2}{(s\zeta_1\zeta_2-\zeta_1-\zeta_2+\alpha)^2}\right|.\]
Thus, when $s>3$, each weight $W_{s,\alpha}$ is continuous and bounded above and below on $\mathbb{T}^2$, but even for this simple choice of RIF, the explicit representation of  the $\sigma_{s, \alpha}$ involves some fairly complicated expressions.

\begin{figure}[h!]
    \subfigure[Level set for $\phi=(3z_1z_2z_3-z_1z_2-z_1z_3-z_2z_3)/(3-z_1-z_2-z_3)$ for $\alpha=i$.]
      {\includegraphics[width=0.4 \textwidth]{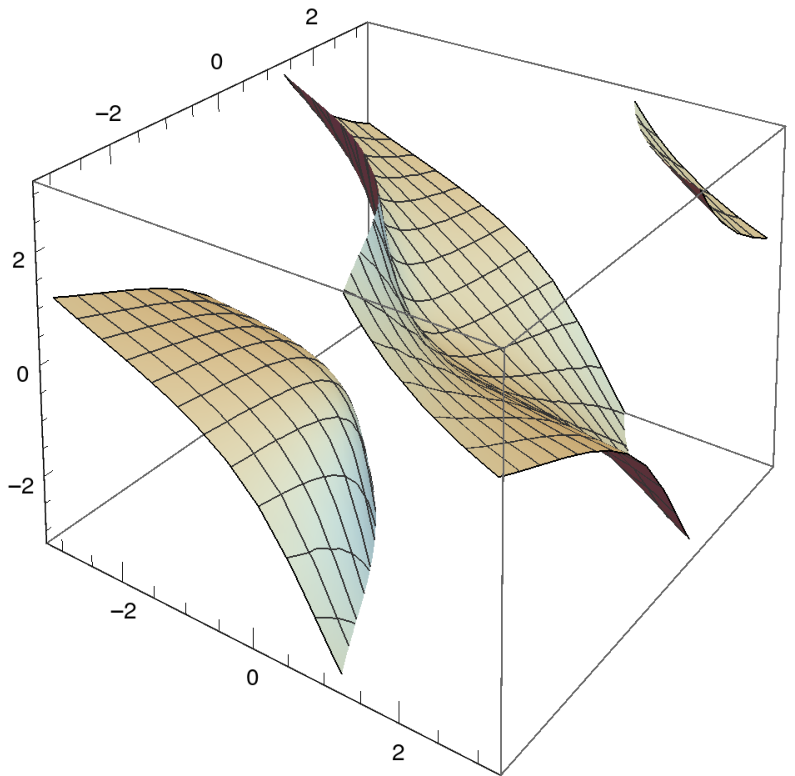}}
    \hspace{.5in}
    \subfigure[Level set for $\phi=(3z_1z_2z_3-z_1z_2-z_1z_3-z_2z_3)/(3-z_1-z_2-z_3)$ for $\alpha=1$ and $\alpha=-1$ (salmon).]
      {\includegraphics[width=0.4 \textwidth]{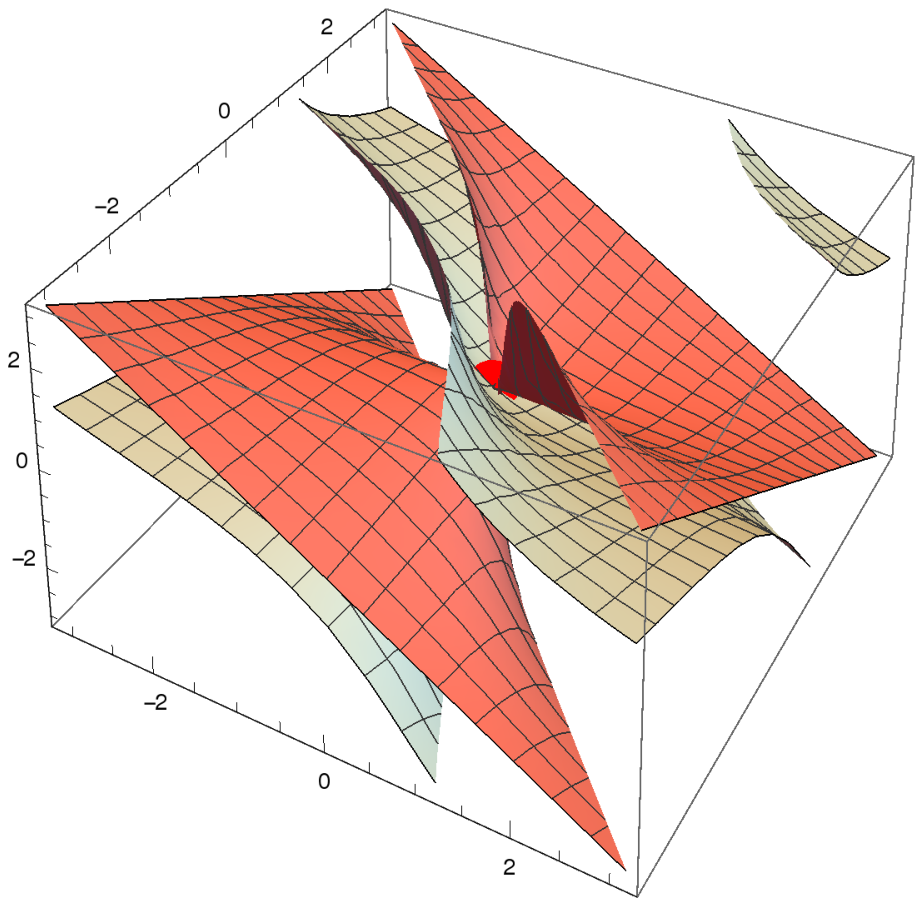}}
  \caption{\textsl{Supports of the Clark measures $\sigma_{3, \alpha}$ for $\phi_3$.}}
   \label{fig:3var}
\end{figure}

We now turn to the critical case $s=3$. Then $\phi_3$ has a singularity at $(1,1,1)\in \mathbb{T}^3$, with $\phi^*_3(1,1,1)=\angle \lim_{z\to (1,1,1)}\phi_3(z)=-1$, and we check that 
$\phi_3(1,1,z_3)\equiv -1$, reflecting the fact that the corresponding Blaschke factor experiences a degree drop. For all $\alpha \neq -1$, the two-variable RIF $1/\psi^{\alpha}_3$ is continuous on $\overline{\mathbb{D}^2}$, and the weight $W_{3,\alpha}$ also remains continuous. However, for each $\alpha \in \mathbb{T}\setminus \{-1\}$, we have $W_{3,\alpha}(1,1)=0$. Finally, examining what happens for $\alpha=-1$, the non-tangential value of $\phi_3$ at its singularity, reveals some of the difficulties that can arise in higher dimensions. First of all,
\[\zeta_3 = \psi^{-1}_3(\zeta_1,\zeta_2)=\frac{-3+\zeta_1+\zeta_2+\zeta_1\zeta_2}{-1-\zeta_1-\zeta_1+3\zeta_1\zeta_2}\]
is the reciprocal of an RIF with a singularity at $(1,1)$, illustrating the fact that the level set $\mathcal{C}_{-1}$ cannot be viewed as a smooth surface in the three-torus. Figure \ref{fig:3var} shows the graphs of $\mathcal{C}_i$, $\mathcal{C}_1$, and $\mathcal{C}_{-1}$ on $\mathbb{T}^3$, where points are associated with their arguments in $[-\pi, \pi)^3$.

Moreover,  we see that 
\[W^*(\zeta_1,\zeta_2)=\lim_{\alpha\to -1}W_{3,\alpha}(\zeta_1,\zeta_2)=\left|\frac{-3(\zeta_1^2\zeta_2+\zeta_1\zeta_2^2+\zeta_1+\zeta_2)+\zeta_1^2+10\zeta_1\zeta_2+\zeta_2^2}{(3\zeta_1\zeta_2-\zeta_1-\zeta_2-1)^2}\right|\]
is a discontinuous function on $\mathbb{T}^2$, which is moreover unbounded near $(1,1)\in \mathbb{T}^2$, as can be verified by evaluating along the curve $\{(e^{i\theta}, e^{-i\theta})\}\subset \mathbb{T}^2$ to obtain the expression $W^*(e^{i\theta}, e^{-i\theta})=1+\frac{1}{1-\cos \theta}$. 

Given this example, it would appear that a more sophisticated approach is needed to handle Clark measures for RIFs in higher dimensions that possess singularities.

\section*{Acknowledgements}
Bickel was partially supported  by National Science Foundation DMS grant \#2000088. Sola was partially supported by the National Science Foundation under DMS grant \#1928930 while he participated in a program hosted by MSRI (Berkeley, CA) during the spring 2022 semester.


\begin{thebibliography}{alpha}
\bibliographystyle{apalike}



\bibitem{AMS06}J. Agler, J.E. Mc Carthy, and M. Stankus, Toral algebraic sets and function theory on polydisks, J. Geom. Anal. {\bf 16} (2006), no. 4, 551--562. 


\bibitem{AD20}A.B. Aleksandrov and E. Doubtsov, Clark measures on the complex sphere, J. Funct. Anal. {\bf 278} (2020), 108314.

\bibitem{AD22}A. B. Aleksandrov and E. Doubtsov, Comparison of Clark measures in several complex variables, \emph{Extended abstracts fall 2019}---\emph{spaces of analytic functions: approximation, interpolation, sampling}, 9–16, Trends Math. Res. Perspect. CRM Barc., 12, Birkh\"auser/Springer, Cham, 2021.

\bibitem{BCSMich}K. Bickel, J.A. Cima, and A. A. Sola, Clark measures for rational inner functions, Michigan Math. J., to appear. https://doi.org/10.1307/mmj/20216046.

\bibitem{BKPS}K. Bickel, G. Knese, J.E. Pascoe, and A. Sola, Local theory of stable polynomials and bounded rational functions of several variables, preprint available at https://arxiv.org/2109.07507

\bibitem{BPS17}K. Bickel, J.E. Pascoe, and A. Sola, Derivatives of rational inner functions: geometry of singularities and integrability at the boundary, Proc. London Math. Soc. \textbf{116} (2018), 281-329.

\bibitem{BPSprep}K. Bickel, J.E. Pascoe, and A. Sola, Level curve portraits of rational inner functions, Ann. Sc. Norm. Sup. Pisa Cl. Sc. {\bf XXI} (2020), 451-494.

\bibitem{BPSmulti}K. Bickel, J.E. Pascoe, and A. Sola, Singularities of rational inner functions in higher dimensions, Amer. J. Math. {\bf 144} (2022), 1115-1157.

\bibitem{CMR}J.A. Cima, A.L. Matheson, and W.T. Ross, The Cauchy transform, Math. Surveys and Monographs {\bf 125}, Amer. Math. Soc., Providence, RI, 2006.

\bibitem{D19}  E. Doubtsov,
Clark measures on the torus. 
Proc. Amer. Math. Soc. 148 (2020), no. 5, 2009--2017.

\bibitem{FisBook}G. Fisher, Plane algebraic curves, \emph{Student Math. Library} 15 (Translated from the 1994 German original by Leslie Kay; Amer. Math. Soc., Providence, RI 2001).

\bibitem{GMR}S.R. Garcia, J. Mashreghi, and W.T. Ross, Introduction to model spaces and their operators, \emph{Cambridge studies in advanced mathematics} 148, Cambridge University Press, 2016. 

\bibitem{J14}M.T. Jury, Clark theory in the Drury-Arveson space, J. Funct. Anal. {\bf 266} (2014), 3855-3893.

\bibitem{Kne11}G. Knese,  Rational inner functions in the Schur-Agler class of the polydisk, Publ. Mat. {\bf 55} (2011), 343-357.

\bibitem{Kne15}G. Knese, Integrability and regularity of rational functions, Proc. London. Math. Soc. {\bf 111} (2015), 1261-1306.

\bibitem{Krantz} S.G. Krantz. Function Theory of Several Complex Variables. Reprint of the 1992 edition. AMS Chelsea Publishing, Providence, RI, 2001.

\bibitem{Krantz2} S.G. Krantz and H.R. Parks. Geometric Integration Theory.  Birkh\"auser Boston, Inc., Boston, MA, 2008. 

\bibitem{Rud69} W. Rudin, \emph{Function Theory in polydisks},
W. A. Benjamin, Inc., New York-Amsterdam, 1969.

\bibitem{RudSt65} W. Rudin and E. L. Stout, Boundary properties of functions of several complex variables, J. Math. Mech. 14 (1965), 991--1005.


\bibitem{Sar94}
D. Sarason, 
\emph{Sub-Hardy Hilbert spaces in the unit disk.}
University of Arkansas Lecture Notes in the Mathematical Sciences, 10. A Wiley-Interscience Publication. John Wiley \& Sons, Inc., New York, 1994. 

\end{thebibliography}
\end{document}